\theoremstyle{plain}
\newtheorem{theorem}{Theorem}
\newtheorem{lemma}{Lemma}
\newtheorem{prop}{Proposition}
\newtheorem{cor}{Corollary}
\theoremstyle{definition}
\newtheorem{defi}{Definition}
\newtheorem{remark}{Remark}
\begin{document}
\title{The dynamics of hyperbolic rational maps \\ with Cantor Julia sets}
\author{Atsushi KAMEYAMA\thanks{Gifu University}
}
\maketitle
\begin{abstract}
  Let $f:\hat{\mathbb C}\to\hat{\mathbb C}$ be a hyperbolic rational map
  of degree $d\ge2$ on the Riemann sphere.
It has been known that the Julia set $J_f$ is a Cantor set if and only if there exists a
positive integer $n>0$ such that  $\overline{f^{-n}(U)}\subset U$
 for some open topological disc $U$
containing no critical values.
Let $n_f$ denote the minimal positive integer satisfying the above.
The problem is whether  $n_f=1$ or not.

Let $S_d$ denote the shift locus of rational maps of degree $d$.
We show that $n_f=1$ for generic $f\in S_d$ and that there is
a rational map $\bar f\in S_4$ with $n_{\bar f}=2$.
We also prove that $S_d$ is connected using the generic case result.
In particular, generic hyperbolic rational maps of degree $d$
with Cantor Julia sets are
qc-conjugate to each other.
\end{abstract}

\section{Introduction}
In this paper, we investigate the dynamics of rational maps
with Cantor Julia sets.
Specifically, we are interested in rational maps whose dynamics on
the Julia sets are the full shift.
Throughout this paper, we denote by $f:\hat{\mathbb{C}} \to
\hat{\mathbb{C}}$  a rational map of
the Riemann sphere to itself of degree $d\ge2$, by $J=J_f$
its Julia set, and by $F=F_f$ its Fatou set.

In general,  Cantor sets  are often observed in spaces on which several contracting
mappings act.
For example, suppose that $U$ is an open subset of a complete metric space $(X,d)$,
 and that $f_i:\overline U\to \overline U, i=1,2,\dots,N$
are injective mappings with disjoint
images such that $d(f_i(x),f_i(y))\le cd(x,y)$ for some $0<c<1$.
Then there exists a Cantor set $K\subset \overline U$ satisfying $K=\bigcup_{i=1}^N f_i(K)$.
The mapping $g:K\to K$ defined by $g|_{f_i(K)}=f_i^{-1}$ is topologically
conjugate to the one-sided $N$-full shift.

In the context of complex dynamics, the appearance of Cantor Julia sets
for the quadratic family
$q_c(z)=z^2+c$ is well-known.
If $c$ does not belong to the Mandelbrot set, then the Julia set $J_c$ is
a Cantor set, and $q_c|_{J_c}$ is topologically conjugate to the one-sided
$2$-full shift.
Let $G$ be the Green's function for such a  $q_c$, and take $x$ with $G(0)<x<G(c)$.
Then $U=\{z|G(z)< x\}$ is connected and simply connected, and
 $q_c$ has two inverse branches $f_1,f_2$ on $U$.
They satisfy the general setting above for the Poincar\'e metric on $U$.
If $c'$ is another point outside the Mandelbrot set, then clearly $q_c$ and $q_{c'}$
are topologically conjugate to each other on the Julia sets.
It is known that the conjugacy is extended to a qc-conjugacy on the whole sphere.
However this is not the case with general polynomials of degree more than two.

As to rational maps, one of known sufficient conditions
 to have Cantor Julia set is as follows:
 \begin{quote} There exists a (super)attracting
fixed point whose immediate basin of attraction contains all the critical points.
\end{quote}
This is a classical result, due to Fatou \cite{Fatou1919} and Julia \cite{Julia1918}.
We call this condition {\em Cond C}.

For general rational maps, Cond C is not equivalent to having Cantor Julia set;
one of the equivalent conditions  is known as the Branner-Hubbard condition
(\cite{BrHu92},\cite{QiuYin09},\cite{Zhai10}).
However, we use Cond C in this study because we are interested in the situation
where the dynamics on $J$ is topologically
conjugate to the full shift.
If $J$ is Cantor, then there exists a semi-conjugacy from the full shift
(Proposition \ref{propsemiconj}), which is not necessarily homeomorphic.

\begin{defi}
 Let $(\sigma,\Sigma_d)$ be the {\em one-sided $d$-full shift}, namely,
$$\Sigma_d:=\{i_1i_2\cdots:i_k=1,2,\dots,d\}=\{1,2,\dots,d\}^{\mathbb N}$$ is the set of
infinite words of symbols $\{1,2,\dots,d\}$ and the {\em shift map}
 $\sigma:\Sigma_d\to\Sigma_d$ carries $i_1i_2i_3\cdots$ to $i_2i_3\cdots$.
The set $\Sigma_d$ is endowed with the product topology of the discrete space.
\end{defi}

\begin{defi}
 We say
 \begin{enumerate}
\item $f$ is {\em Cantor} if there exists a
homeomorphism $h:\Sigma_d\to J$;
   \item $f$ is {\em d-Cantor (dynamically Cantor)} if  $(f,J)$  is topologically conjugate to $(\sigma,\Sigma_{d})$,
    namely there exists a
   homeomorphism $h:\Sigma_{d}\to J$ such that $f\circ h=h\circ\sigma$;
   \item $f$ is {\em s-Cantor (strongly Cantor)} if there exists
   a closed topological disc $\overline{D}\subset\hat{\mathbb C}$ containing no critical value such that
   ${f^{-1}(\overline D)}\subset \overline{D}$.
   \end{enumerate}
\end{defi}

We raises three questions.
Let $f$ and $g$ be  rational maps of degree $d$ satisfying Cond C.
\renewcommand{\descriptionlabel}[1]{\hspace\labelsep\normalfont #1}
\begin{description}
\item[Q1] Is $f$ d-Cantor?
\item[Q2] Is  $f$ s-Cantor?
 \item[Q3] Are $f$ and $g$ topologically conjugate on $\hat{\mathbb C}$ to each other?
\end{description}

The answers will be given in the present paper.
To sum up,

\begin{description}
  \setlength{\labelwidth}{0mm}
\item[Answer to Q1]
Yes.
We show this result in two ways.
In Theorem \ref{th1}, we choose a collection of $d$ inverse branches of $f$ on $J_f$
which are contracting in some metric.
The other proof is a consequence of the next answer.

%For two hyperbolic rational maps with Cond C, we have a topological conjugacy %between the Julia sets,
%and moreover it can be quasiconformally extended to their neighborhoods (Corollary %\ref{cor:locqc}).

\item[Answer to Q2]
No, in general. But yes, generically.
We give an example of a non-s-Cantor rational map $\bar f$ satisfying Cond C (see Figure \ref{fig1}).
In fact, we show the (possibly) stronger statement that any radial
(Definition \ref{deftree}) for this $\bar f$ induces a coding map which is not one-to-one
(Theorem \ref{th3}).
On the other hand, if $f$ with Cond C satisfies
that no two critical orbits meet and  that no critical
orbit lands at a (super)attracting fixed point, then $f$ is s-Cantor (Theorem \ref{th:8}).
This implies that $f$ with Cond C is topologically conjugate on $J_f$ to
the full shift,
since the assumption of Theorem \ref{th:8} is generic in the shift locus
of rational maps.

  \item[Answer to Q3]
  No, in general. But yes, generically.
  It is trivial that two rational maps of the same degree with Cond C are in general
  not topologically conjugate  on the whole sphere to each other.
Meanwhile we show  that the shift locus is connected (Theorem \ref{th6}),
and that if two rational maps satisfy the assumption of
Theorem \ref{th:8} and a generic condition on the
critical points,
then they are qc-conjugate on $\hat{\mathbb C}$ (Theorem \ref{th7}).
These are consequences of Theorem \ref{th:8}, and are not deduced
only from  the general theory in \cite{ManeSadSull83}.
\end{description}

{\centering
\begin{figure}[h]\label{fig1}
  \begin{minipage}{6cm}
    \fbox{\includegraphics[width=5cm]{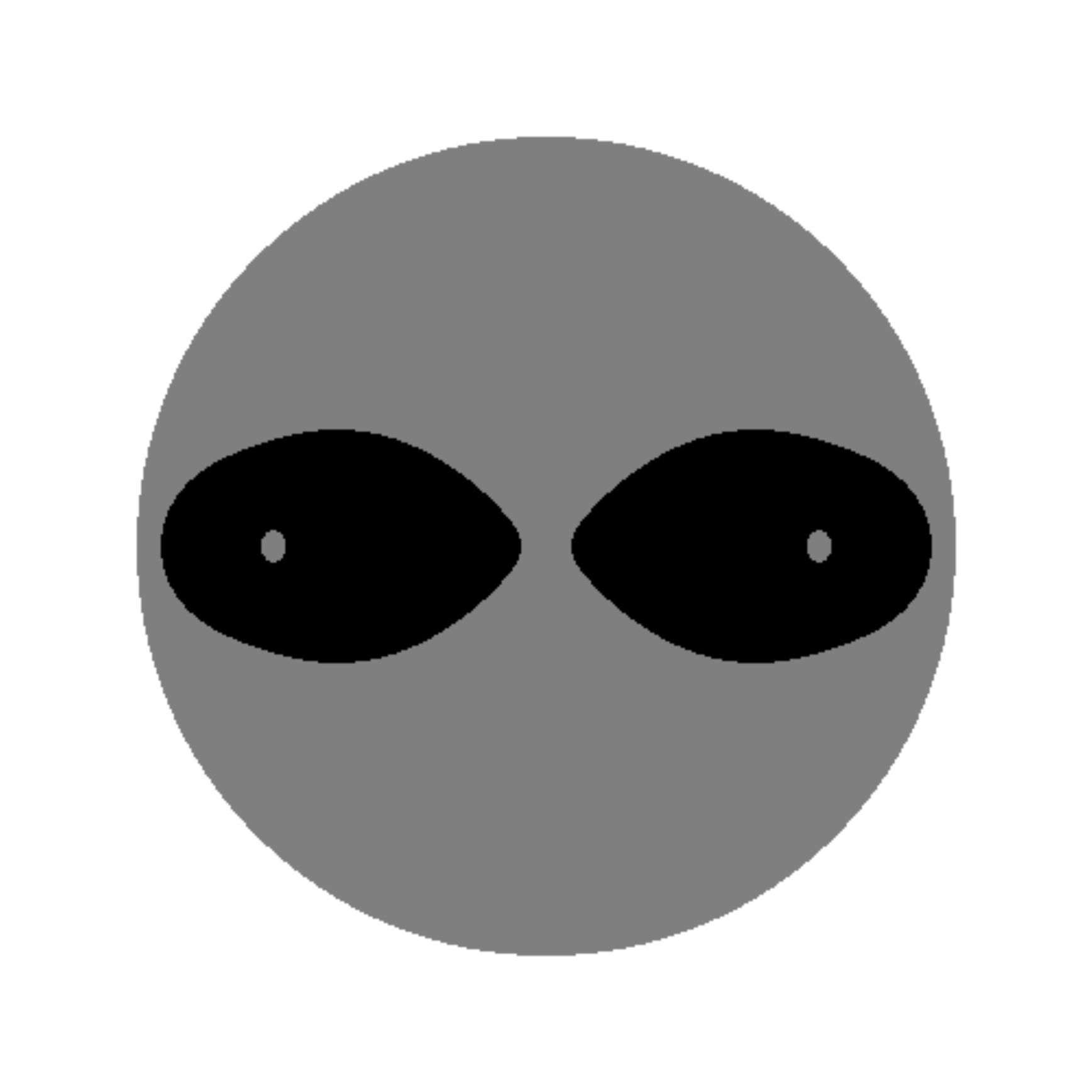}}
  \end{minipage}
  \begin{minipage}{6cm}
    \fbox{\includegraphics[width=5cm]{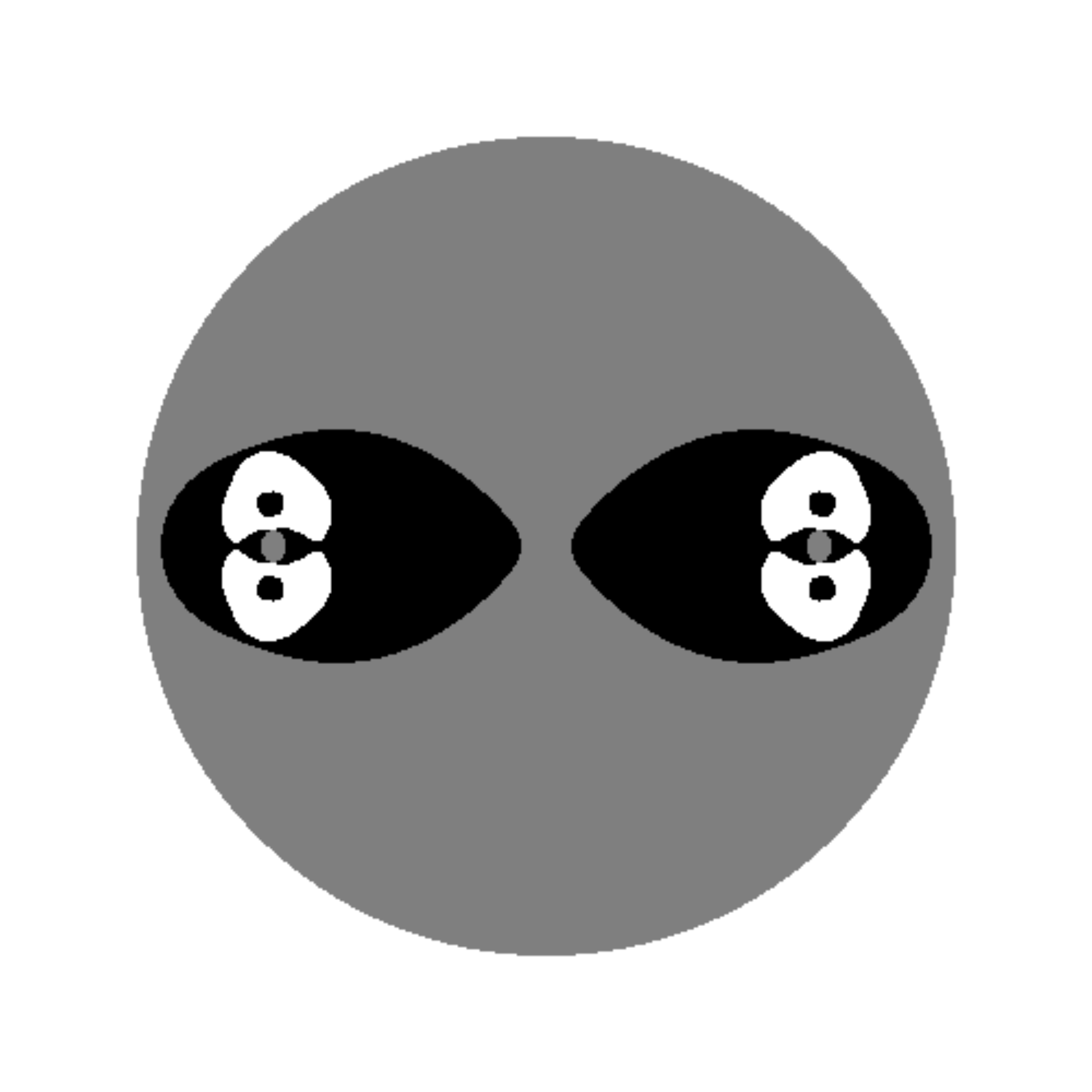}}
  \end{minipage}
  \caption{An example of rational maps d-Cantor but non-s-Cantor.
  Let $\bar f(z)=a(z^2-1)+(4a(z^2-1))^{-1}, a=1.665i$.
  In the left, the gray disc is $D=\{|z|\le 3/2\}$, and the union of
  the black
  annuli  $X_1,X_2$ is $\bar f^{-1}(D)$.
  In the right, we see the inverse image $\bar f^{-1}(X_i)$, which is the union of
  two white annuli $X_{1i}\subset X_1$ and $X_{2i}\subset X_2$.
  We have $\bar f:X_{ji}\to X_i$ is a covering of degree two.
  In Theorem \ref{th3}, we will prove that $\bar f$ is not s-Cantor.}
\end{figure}
}

There exist a lot of researches on the shift locus for polynomials.
Among them,  \cite{BrHu88} and \cite{BlDeKe91} are the pioneer works.
The shift locus for rational maps of degree 2 is investigated in
\cite{GoKe90}.
The connectedness of the shift locus for polynomials is known.
A proof of this fact appears in \cite{DeMarcoPilgrim11}.
However the connectedness of the shift locus for general rational maps
has not been known so far.

\medskip

Our main results  are stated as follows.

In Section 3, we restrict $f$ to be geometrically finite or hyperbolic.
We give many equivalent conditions for $f$ to be d-Cantor.
Among them,

\medskip

\noindent
{\bfseries Theorem A.} (Partial statements of Theorem \ref{th1})
Suppose that $f$ is a hyperbolic rational map.
Then the following are equivalent:
\begin{itemize}
  \item $f$ is Cantor.
  \item $f$ is d-Cantor.
  \item $f^n$ is s-Cantor for some integer $n>0$.
  \item There exists a (super)attracting fixed point such that
  the immediate basin of attraction includes at least $2d-4$
  critical values counted with multiplicity.
  \item Any `bounded iterated monodromy group' is a finite group.
\end{itemize}

\medskip

In Section 4 and 5, we consider a sufficient condition to be s-Cantor, and
deduce some knowledge about the shift locus of rational maps.

\medskip

\noindent
{\bfseries Theorem \ref{th:8}.}
If a Cantor hyperbolic rational map $f$ satisfies that no two critical orbits
meet and that the orbit of any non-fixed critical point
 contains no fixed point, then $f$ is s-Cantor.

\medskip

To prove the theorem, we construct a topological disc $D$ which
satisfies the condition of the definition of s-Cantor.
We start with a topological disc which is the whole sphere minus
a small neighborhood of the (super)attracting fixed point, and deform it combinatorially
several times to obtain a required $D$.
If $d>2$, such $D$ is not unique up to homotopy relative to the postcritical set.

\begin{defi}
 The {\em shift locus} $S_d$ is the set of d-Cantor
 hyperbolic rational maps
 of degree $d$.
 The {\em strong shift locus} $T_d$ is the set of s-Cantor hyperbolic
 rational maps.

 Clearly, $T_d\subset S_d$.

 Let $\mathrm{Pol}$ denote the set of polynomial maps.
\end{defi}

\noindent{\bfseries Theorem B.}(Theorem \ref{th:pol}, \ref{th4}, \ref{th6},
and \ref{th7})
\begin{enumerate}
\item $S_d$ ($d\ge2$) is connected.
\item $T_d\cap\mathrm{Pol}=S_d\cap\mathrm{Pol}$ ($d\ge2$),
$T_2=S_2$, and $T_d\subsetneq S_d$ ($d\ge4$).
\item If $f$ and $g$ satisfy the assumption of
Theorem \ref{th:8} and a generic condition on the
critical points,
then they are qc-conjugate on $\hat{\mathbb C}$

\end{enumerate}

The proof of the last statement of 2 is deferred
to Section 5.
We prove:

\medskip
\noindent
{\bfseries Theorem \ref{th3}.}
There exists a hyperbolic rational map $\bar f$ of degree four which is Cantor
 but not s-Cantor.

\medskip
There we  show that $\bar f$ in Figure \ref{fig1} is not s-Cantor and $\bar f^2$ is s-Cantor.
Our proof of Theorem \ref{th3} is algebraic.
From $\bar f$, we extract a family of homomorphisms $\tilde f_r:\mathbb Z_2\oplus \mathbb Z_2
\to  D_2\times D_2$, where $D_2$ is the Klein four-group.
To verify the statement, we investigate the behavior of the homomorphisms.

 The following problems are open:

 \medskip
 \noindent
 {\bfseries Unsolved Problem 1}
 \begin{itemize}
   \item Does there exist a Cantor hyperbolic rational map of degree three which is
   not s-Cantor?
   \item For any integer $n>0$, does there exist a Cantor hyperbolic rational map $f$
   such that $f^{n}$ is  not s-Cantor and $f^{n+1}$ is s-Cantor?
 \end{itemize}

\section{Preliminary}

We give several notations, definitions, and basic facts.
Let $f:\hat{\mathbb C}\to\hat{\mathbb C}$ be a rational map of
 degree $d$ with Julia set $J=J_f$.

\begin{defi}
  For $z\in\hat{\mathbb C}$, the orbit of $z$ is
  $\mathtt{Orb}(z):=\{f^k(z):k\ge0\}$.
Let $\mathtt{Cr}=\mathtt{Cr}_f$ denote the set of critical points of $f$, and set
$P=P_f:={\{f^k(c):c\in \mathtt{Cr},k>0\}}=\bigcup_{c\in\mathtt{Cr}}\mathtt{Orb}(f(c))$.
The closure of $P$ is said to be the {\em postcritical set}.
For $c\in\mathtt{Cr}$, the orbit $\mathtt{Orb}(f(c))$ is
called a {\em critical orbit}.

Let $\mathtt{At}=\mathtt{At}_f$ denote the set of attracting periodic points
 of $f$, and
 $\mathtt{Pa}=\mathtt{Pa}_f$ the set of parabolic periodic points.

 We say  that $f$ is {\em  geometrically finite} if $J\cap\overline P$
 is finite.
 In other words, each critical orbit is either eventually periodic,
 or converges
 to an attracting or parabolic periodic orbit.
\end{defi}

\begin{defi}
We use the symbol $\sigma $ also as the shift map on the set of finite words
 $\mathtt{Word}(k):=\{i_1i_2\cdots i_k:i_m=1,2,\dots,d\}$.
Each finite word $w=i_1i_2\cdots i_k\in \mathtt{Word}(k)$ can be considered as the branch of $\sigma^{-k}$, namely,
$w:\mathtt{Word}(n)\to \mathtt{Word}(n+k), j_1\cdots j_n\mapsto wj_1\cdots j_n$.
The empty word is denoted by $\emptyset$, or $\mathtt{Word}(0)=\{
\emptyset\}$.
We write $\mathtt{Word}:=\bigcup_{k=0}^\infty \mathtt{Word}(k)$
\end{defi}

%\begin{defi}
%Let $p,q \in \hat{\mathbb C}$ be two points.
%We say $q$ is {\em connectable} to $p$ if for any neighborhood $U$ of $p$ there%  exixt $k>0$ such that a connected component of $f^{-k}(U)$ contains $p$ and $q$.
%\end{defi}
\begin{defi}\label{deftree}
Let $l_i:[0,1]\to \hat{\mathbb C}-P$ ($i=1,2,\dots,d$) be  continuous
paths with the initial point $\bar x=l_i(0)$ in common.
We say that a collection of paths
$r=(l_i)_{i=1,2,\dots,d}$ is a
  {\em radial} if
  $f^{-1}(\bar x)\supset  \{l_i(1):i=1,2,\dots,d\}$.
Set $R:=\bigsqcup_{i=1}^d[0,1]_i/\sim$, where $[0,1]_i$
  are copies of the unit interval $[0,1]$ and we glue them at $0$.
  (Recall that $R$ is called the complete bipartite graph $K_{1,d}$
  in graph theory.)
  It is convenient to  consider a radial to be the mapping $r:
R\to \hat{\mathbb C}-P$
with $r(t)=l_i(t)$ for $t\in[0,1]_i$.
Note that a continuous mapping  $r:
  R\to \hat{\mathbb C}-P$ is a radial
  if and only if  $f^{-1}(r(0))\supset \{r(1_i):i=1,2,\dots,d\}$.

  In the situation above,
  we call  $\bar x\in\hat{\mathbb C}-P$ the {\em basepoint}.

For a path $\gamma:[0,1]\to \hat{\mathbb C}-P$ with $\gamma(0)=\bar x$,
 let $L_i(\gamma)$ be the lift of
  $\gamma$ by $f$ (i.e. $f\circ L_i(\gamma)=\gamma$) with
  $L_i(\gamma)(0)=l_i(1)$.
  For $w\in \mathtt{Word}(k)$, we inductively define paths $l_w$ such that
  $l_{iw}=l_iL_i(l_w)$.
  Naturally $L_w$ is defined for any $w\in\mathtt{Word}$,
   and $l_{ww'}=l_wL_{w}(l_{w'})$.
For $\omega=i_1i_2\cdots\in\Sigma_d$, we write $\phi_r(\omega)=\lim_{k\to\infty}
 l_{i_1i_2\cdots i_k}(1)$  if exists.
 We can naturally define an arc $l_\omega$ connecting
 $\bar x$ and $\phi_r(\omega)$ up to a change of parameter.
We say that $\phi_r:\Sigma'\to J$ is the {\em coding map} associated with $r$, where $\Sigma'$ is the set of $\omega\in \Sigma_d$ for which $\phi_r(\omega)$ exists.
This method of coding is originated by F. Przytycki \cite{Prz85},\cite{Prz86},
and is called geometric coding tree.
  In many cases,  $\phi_r$ is continuous and $\Sigma'=\Sigma_d$,
   in particular whenever $f$ is geometrically finite
  (Lemma \ref{lemmetric}) or $f$ is Cantor (Proposition \ref{propsemiconj}).

  We may consider deformation of radials.
  Let $r=(l_i),r'=(l_i')$ be two radials which do not necessarily
  satisfy $r(0)=r'(0)$.
We say that $r$ and $r'$ are {\em homotopic} if
there exists a homotopy $h:
R\times[0,1]\to \hat{\mathbb C}-P$
between $r$ and $r'$ such that  $h(\cdot,s):
  R\to \hat{\mathbb C}-P$ is a radial
 for every $s\in[0,1]$.
 \end{defi}

\begin{defi}
 We say
\begin{enumerate}
  \item $f$ is {\em t-Cantor (tree Cantor)} if there exists a
   radial $r$ such that the coding map $\phi_r:\Sigma_d\to J$ is
   a homeomorphism;
\item  $f$ is {\em ss-Cantor} (semi-strongly Cantor) if
   there exists a path-connected set $X\subset\hat{\mathbb{C}}$ such that
   $f^{-1}(X)$ has $d$ connected components
   $X_1,X_2,\dots,X_d\subset X$, and such that
   $\overline {X_i}\cap\overline{X_j}$ has
   no point of $J$ for
   any $i\ne j$.
  \end{enumerate}

\end{defi}

Now we have five notions of Cantor Julia sets.
It is easy to see that `s-Cantor' $\Rightarrow$ `ss-Cantor' $\Rightarrow$
`t-Cantor' $\Rightarrow$
 `d-Cantor' $\Rightarrow$ `Cantor' (Proposition \ref{lemcantor}).
  It will be proved that the last three are different each other.
  If $f$ is d-Cantor, then $f$ is geometrically finite (Proposition \ref{propgeof}).

  \medskip
  \noindent
  {\bfseries Unsolved Problem 2}
\begin{itemize}
  \item Does it hold that `ss-Cantor' $\Rightarrow$ `s-Cantor'?
  \item Does it hold that `t-Cantor' $\Rightarrow$ `ss-Cantor'?
  \end{itemize}

\medskip

  If $f$ is  Cantor, the Fatou set $F$ is
  connected (see Theorem 13.4 \cite{Moise77}).
Thus a Cantor rational map has neither Siegel disc  nor Herman ring, and
$\#(\mathtt{At}\cup\mathtt{Pa})=1$.
The unique fixed point in $\mathtt{At}\cup\mathtt{Pa}$ is
(super)attracting or parabolic with the
local form  $z\mapsto z+a z^2+\cdots$.

\begin{defi}\label{defidomain}
 Let $p\in\mathtt{At}\cup\mathtt{Pa}$, let $F_0$ be a connected
 component of the Fatou set $F$ with $p\in\overline {F_0}$, and
let $n$ be the period of $F_0$.
 A connected and simply connected open set $U$ included in $F_0$
satisfying the following condition is called
a {\em simple domain} for $p$ and $F_0$:
$p\in \overline U$, $\partial U\cap \overline P\subset \{p\}\cap\mathtt{Pa}$,
$\partial U$ is a piecewise smooth simple
 closed curve, $\overline{f^n(U)}\subset U\cup\{p\}$,
 $\{f^{kn}|U\}_{k>0}$
converges to $p$ uniformly, and for any $x\in F_0$
there exists $k>0$ such that $f^{kn}(x)\in U$.
If $p$ is parabolic, we take $U$ to be a connected component of
an attracting petal.
Remark that $p\in U$ if $p\in\mathtt{At}$, and   $p\notin U$ if $p\in\mathtt{Pa}$

We denote by $U_k$  the connected component of $f^{-kn}(U)$
including $U$.
\end{defi}

\begin{lemma}\label{lemdomain}
 Let $p\in\mathtt{At}\cup\mathtt{Pa}$, and let $F_0$ be a connected
 component of $F$ with $p\in\overline{F_0}$, and
$U$ a simple domain for $p$ and $F_0$.
Take $U_k$ as above.
Then $F_0=\bigcup_{k=1}^\infty U_k$.
In particular, $P\cap F_0\subset U_k$ for some $k>0$.
\end{lemma}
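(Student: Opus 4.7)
The plan is, for each $x\in F_0$, to produce an index $K$ with $x\in U_K$ by joining $x$ to $U$ by a path inside $F_0$ and then absorbing the entire path into a single component of $f^{-Kn}(U)$ under a sufficiently high iterate of $f^n$.

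Two preliminary facts set the stage. First, $f^n(U)\subset U$: this is immediate from $\overline{f^n(U)}\subset U\cup\{p\}$ in the attracting case, where $p\in U$, and in the parabolic case it follows because $f^{-n}(p)\subset J$ forbids $p\in f^n(U)$ for the open set $U\subset F$. Consequently $U\subset f^{-n}(U)$, and by connectedness $U$ lies in a single component of $f^{-n}(U)$, which is $U_1$ by definition. Iterating the inclusion $f^{-kn}(U)\subset f^{-(k+1)n}(U)$ and observing that each $U_k$ is connected and contains $U$, one obtains the nesting $U\subset U_1\subset U_2\subset\cdots$.

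For the core step, fix $x\in F_0$ and choose a path $\alpha\colon[0,1]\to F_0$ from $x$ to some $y_0\in U$, which is possible since a connected open subset of $\hat{\mathbb C}$ is path-connected. For each $t\in[0,1]$ the simple-domain condition produces some $k(t)>0$ with $f^{k(t)n}(\alpha(t))\in U$, and by continuity a whole open subarc around $t$ is sent into $U$ by $f^{k(t)n}$. Extract a finite subcover indexed by $t_1,\dots,t_m$ and set $K=\max_i k(t_i)$; then $f^n(U)\subset U$ allows one to iterate each local exponent $k(t_i)$ up to $K$, yielding $f^{Kn}(\alpha([0,1]))\subset U$. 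Therefore $\alpha([0,1])\subset f^{-Kn}(U)$ is connected, meets $U\subset U_K$ at $y_0$, and hence lies entirely in $U_K$; in particular $x\in U_K$, proving $F_0=\bigcup_{k\ge 1}U_k$.

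The postcritical statement follows at once: each critical orbit meeting $F_0$ eventually enters $U$ by the simple-domain condition and then remains there since $f^n(U)\subset U$, so $(P\cap F_0)\setminus U$ is finite; applying the main step to these finitely many points and taking the maximum of the resulting indices gives a single $K$ with $P\cap F_0\subset U_K$. The main obstacle I expect is producing that single absorbing exponent $K$ for the whole path $\alpha$: the absorption guaranteed by the simple-domain definition is only pointwise, and in the parabolic case $p\notin U$ rules out the naive argument that $U$ is a neighborhood of the attracting limit; the compactness-plus-iteration trick above converts pointwise absorption into uniform absorption precisely by exploiting the forward invariance $f^n(U)\subset U$.
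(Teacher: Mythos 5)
Your proof is correct, but it takes a genuinely different route from the paper's. The paper's argument is a short proof by contradiction using the open-and-closed characterization of connectedness: assuming $\bigcup_k U_k \ne F_0$, since $\bigcup_k U_k$ is open and nonempty and $F_0$ is connected, $\bigcup_k U_k$ cannot be closed in $F_0$, so one extracts a point $x\in F_0\cap\bigl(\overline{\bigcup_k U_k}-\bigcup_k U_k\bigr)$ and derives a contradiction from the existence of $k$ with $f^{kn}(x)\in U$ (implicitly: a neighbourhood of $x$ lies in $f^{-kn}(U)$ and meets some $U_j$, hence $x\in U_{\max(j,k)}$). Your argument is instead constructive: for each $x\in F_0$ you join $x$ to $U$ by a path inside the path-connected open set $F_0$, use compactness to extract finitely many absorption times, and then exploit the forward invariance $f^n(U)\subset U$ to promote the pointwise exponents to a single uniform $K$, after which connectedness of the path image forces it into $U_K$. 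Both proofs hinge on the same two ingredients --- pointwise absorption $f^{kn}(x)\in U$ and connectedness of $F_0$ --- but the paper's is shorter while yours makes the uniformization step explicit and directly produces the index. For the ``in particular'' clause the paper gives no justification, whereas you correctly observe that each critical orbit meeting $F_0$ eventually enters $U$ and is then trapped by $f^n(U)\subset U$, so $(P\cap F_0)\setminus U$ is finite and a single $K$ suffices; that is a clean way to fill the gap the paper leaves to the reader.
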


\begin{proof}
Assume that $F_0\ne\bigcup_{k=1}^\infty U_k$.
 Then $F_0\cap(\overline{\bigcup_{k=1}^\infty U_k}-\bigcup_{k=1}^\infty U_k)
 \ne\emptyset$.
 Indeed, otherwise, $\bigcup_{k=1}^\infty U_k$ is open and closed in $F_0$.
 Existence of
 $x \in F_0\cap(\overline{\bigcup_{k=1}^\infty U_k}-\bigcup_{k=1}^\infty U_k)
 \ne\emptyset$ contradicts the fact $f^{kn}(x)\in U$ for some $k>0$.
\end{proof}

\begin{lemma}\label{lemmetric}
 If $f$ is geometrically finite, then $\phi_r(\omega)$ converges
 for any radial $r:R\to\hat{\mathbb C}-\overline P$ and
 any $\omega\in \Sigma_d$.
Moreover,  $\phi_r=\phi_{r'}$ if $r$ and $r'$ are homotopic.

\end{lemma}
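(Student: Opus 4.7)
The plan is to put a conformal metric on $\hat{\mathbb{C}}-\overline P$ in which the inverse branches $L_w$ shrink uniformly as $|w|\to\infty$, and to harvest both statements from this uniform shrinking.

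First I would set up the expanding metric. Since $P\subset f^{-1}(P)$ and every critical value of $f$ lies in $P$, the map $f\colon\hat{\mathbb{C}}-f^{-1}(\overline P)\to\hat{\mathbb{C}}-\overline P$ is an unramified covering with domain included in the codomain. Equipping $\hat{\mathbb{C}}-\overline P$ with the orbifold hyperbolic metric of the canonical orbifold of $f$, the covering becomes a local isometry while the inclusion is strictly contracting on any compact set bounded away from the parabolic cusps. Using the attracting-petal structure to dispose of the non-strict contraction near parabolic periodic points (this is where geometric finiteness is essential), one obtains the standard uniform-shrinking statement: for every compact $K\subset\hat{\mathbb{C}}-\overline P$ the spherical diameter of the component $L_w(K)$ of $f^{-|w|}(K)$ tends to $0$ uniformly in $w$ as $|w|\to\infty$.

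Convergence of $\phi_r$ is then immediate. Choose a compact $K_0\subset\hat{\mathbb{C}}-\overline P$ containing $r(R)$. The tail $\{l_{i_1\cdots i_m}(1)\colon m\ge k\}$ is contained in $\overline{L_{i_1\cdots i_k}(K_0)}$, whose diameter tends to $0$, so $\{l_{i_1\cdots i_k}(1)\}$ is Cauchy and converges to some $\phi_r(\omega)\in J$. The same estimate gives that $\phi_r\colon\Sigma_d\to J$ is continuous, because two sequences $\omega,\omega'$ sharing the first $k$ symbols produce images in the common shrinking set $\overline{L_{i_1\cdots i_k}(K_0)}$.

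For the homotopy invariance, given $h\colon R\times[0,1]\to\hat{\mathbb{C}}-\overline P$ between radials $r$ and $r'$, fix a single compact $K_0$ containing the whole image of $h$. Uniqueness of path-lifts for the covering $f^{|w|}$ makes $s\mapsto l_{w,s}(1)$ continuous, and since it cannot jump between the (finitely many) components of $f^{-|w|}(K_0)$, the component $C_{|w|}$ containing it is independent of $s$. Applying the uniform diameter bound to $K_0$ yields uniform-in-$s$ convergence, so $s\mapsto\phi_{r_s}(\omega)$ is a continuous path in $J$. For purely periodic $\omega$ of period $n$ the semiconjugacy $f\circ\phi_{r_s}=\phi_{r_s}\circ\sigma$ (immediate from $f\circ L_i=\mathrm{id}$) forces $\phi_{r_s}(\omega)$ to be a periodic point of $f^n$; being in a finite set, the continuous path is constant, so $\phi_r(\omega)=\phi_{r'}(\omega)$. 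Purely periodic words are dense in $\Sigma_d$, so by continuity of $\phi_r$ and $\phi_{r'}$ the equality extends to all of $\Sigma_d$.

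The main obstacle is the uniform shrinking estimate in the presence of parabolic periodic points, where the orbifold hyperbolic metric fails to be strictly expanding and one must argue with attracting petals or horoball neighborhoods to still extract uniform decay of $\operatorname{diam}(L_w(K))$. This is the classical technical heart of the geometrically finite theory; once it is granted, the rest of the argument is essentially formal.
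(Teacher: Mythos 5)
Your convergence argument follows the same philosophy as the paper's: build a metric in which the inverse branches shrink uniformly, with geometric finiteness supplying the control near parabolic cycles. (The paper works on the orbifold universal cover $p:S\to(\mathrm{int}\,\Omega,\nu)$ of a compact expanding domain $\Omega$, lifts the branches to explicit contractions $g_i:S\to S$, and realizes $J$ as the image under $p$ of the attractor of the IFS $(\{g_i\},\hat S)$; you work directly with the hyperbolic/orbifold metric on $\hat{\mathbb C}-\overline P$, but the mechanism is the same.) Where you genuinely diverge is homotopy invariance. The paper's route is essentially algebraic: lift the connecting path $c=h(0,\cdot)$ to $\hat c\subset S$, observe that $g_i(\hat c(1))=\hat l_i'(1)$, and conclude that $r$ and $r'$ induce \emph{the same} IFS on the universal cover, hence the same $\phi_r=p\circ\pi'$, with nothing more to check. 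Your route instead shows that $s\mapsto\phi_{r_s}(\omega)$ is a continuous path in $J$ (via continuity of path-lifts and the uniform shrinking estimate applied to a compact $K_0$ containing the image of the whole homotopy), then pins it down for periodic $\omega$ by noting that the semiconjugacy $f\circ\phi_{r_s}=\phi_{r_s}\circ\sigma$ forces $\phi_{r_s}(\omega)\in\mathrm{Fix}(f^n)$, a finite set, so the path is constant; density of periodic words and continuity of $\phi_r,\phi_{r'}$ then finish. Both are correct. The paper's version is more immediate once the universal-cover machinery is in place; yours avoids explicitly handling the IFS on the cover in the second half, at the cost of the extra continuity-plus-finiteness-plus-density step, and has the pedagogical advantage of making visible why homotopy invariance is a rigidity phenomenon (a continuous family of codings cannot move periodic points). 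One small caveat worth flagging in both treatments: the definition of homotopic radials only requires the homotopy to stay in $\hat{\mathbb C}-P$, while the convergence statement is phrased for radials into $\hat{\mathbb C}-\overline P$; if $\overline P-P\neq\emptyset$ one should either restrict the homotopy to $\hat{\mathbb C}-\overline P$ or argue that it can be pushed off the finite set $\overline P-P$.
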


 \begin{proof}
  This is a consequence of expansiveness of $f$ on  $\Omega$,
  where $\Omega\subset \hat{\mathbb C}$ is a subset defined below.
The crucial part of the proof is due to \cite{TanYin96}.
We show the outline.

  Let $r=(l_i):R\to\hat{\mathbb C}-\overline P$ be a radial
  with basepoint $\bar x$.
  Let $W$ be a union of simple domains of every $p\in\mathtt{At}
  \cup\mathtt{Pa}$ and every connected component $F_0\subset F$ with
  $p\in\overline{F_0}$ such that $f(W)\subset W$ and $r(R)\cap W
  =\emptyset$.
Then  the compact set
$\Omega=\hat{\mathbb C}-W$ satisfies the following:
$J\subset\Omega$, $f^{-1}(\Omega)\subset\Omega$, and
$\Omega$ is a connected and finitely connected domain which has
the boundary  expressed as a union of finitely many smooth curves.

Let $\nu:\Omega\to\mathbb N$ be the ramification
function for $f|_\Omega$, namely $\nu$ is the minimal
function such that $\nu(z)=1$ for
$z\notin P$ and $\nu(z)$ is a multiple of $\nu(w)\deg_wf$
 for $z\in P,w\in f^{-1}(z)$.
Let $p:S\to(\mathrm{int}\,\Omega,\nu)$ be the universal covering
for the Riemann surface orbifold $(\mathrm{int}\,\Omega,\nu)$,
where $S=D=\{|z|<1\}$ with the Poincar{\'e} metric or $S\subset\mathbb C$
with the Euclidean metric according to the
Euler characteristic of $(\mathrm{int}\,\Omega,\nu)$.

  Fix a point $\hat x\in p^{-1}(\bar x)\subset S$.
Let $\hat l_i$ be the lift of $l_i$ by $p$ with $\hat l_i(0)=\hat x$.
  There exist holomorphic maps
$g_i:S\to S$ such that $f\circ p\circ g_i=p$ and
$g_i(\hat x)=\hat l_i(1)$ for $i=1,2,\dots,d$.
If $\mathtt{Pa}=\emptyset$, then there exists $0<\alpha<1$
 such that $\rho(g_i(x),g_i(y))<\alpha \rho(x,y)$, where
$\rho$ is the distance determined by the Poincar{\'e} or Euclidean
metric on $S$.
If $\mathtt{Pa}\ne\emptyset$, then $S=D$ and we extend
$p:D\to\mathrm{int}\,\Omega$ to a surjection
$p:\hat D\to \Omega$,
where $\hat D\subset \overline D$ and any continuous arc
$c:[0,1]\to\Omega$ can be lifted to $\hat c:[0,1]\to \hat D$.
The maps $g_i:D\to D$ are also extended to
$g_i:\hat D\to\hat D$.
Modifying the Poincar{\'e} metric, we can construct
a metric on $\hat D$ which determines the distance $\rho$
such that there exists an increasing function
$h:[0,\infty)\to[0,\infty)$  with $h(t)<t$ for $t>0$ and
$\rho(g_i(x),g_i(y))<h(\rho(x,y))$.

Then we have a compact set $K\subset \hat S$ ($\hat S=S$
or $\hat D$) such that $$K=\bigcup\limits_{i=1,\dots,d}g_i(K).$$
Namely, $K$ is the attractor of the iterated function system
 $(\{g_i\},\hat S)$.
  By construction, $p(K)= J$.
  Moreover,
  $p\circ g_{i_1}\circ g_{i_2}\circ\cdots\circ g_{i_k}(\hat x)
  =l_{i_1i_2\cdots i_k}(1)$ for $i_1i_2\cdots i_k\in \mathtt{Word}(k)$.
Canonically, we have a coding map $\pi':\Sigma_d\to K$ and
  $\phi_r=p\circ \pi':\Sigma_d\to J$.

  Let $r'=(l_i')$ be another radial which is homotopic to  $r$.
  Then there exists a homotopy $h$ between $r$ and $r'$.
  Let $c=h(0,\cdot)$ be the path between the basepoints of $r$ and $r'$,
  and $\hat c$ the lift of $c$ with $\hat c(0)=\hat x$.
  Then $p(\hat c(1))=r'(0)$.
  We have the lifts $\hat l_i'$ of $l_i'$ with $\hat l_i'(0)=\hat c(1)$,
  and  then $g_i(\hat c(1))=\hat l_i'(1)$ for $i=1,2,\dots,d$.
  Thus we obtain the same  iterated function system
 $(\{g_i\},\hat S)$, and so $\phi_{r'}=\phi_r$.
 \end{proof}

\begin{defi}
 For a geometrically finite rational map $f$,
 we say that $\Omega$ in the proof is an {\em expanding domain} for $f$.
 Note that (i) for any $z\in F$ there exists $n>0$ such that $f^n(z)\notin
 \Omega$, and (ii) for any compact set $K\subset\hat{\mathbb C}-(\mathtt{At}
 \cup\mathtt{Pa})$ there exists $n>0$ such that $f^{-n}(K)\subset \Omega$.
\end{defi}

 \begin{prop}\label{lemcantor}
If $f$ is ss-Cantor, then it is t-Cantor.
\end{prop}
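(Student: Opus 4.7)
The plan is to extract an iterated function system from the ss-Cantor data, turn it into a radial, and verify that the associated coding map is a homeomorphism onto $J$.

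First I would set up the inverse branches. Since $f\colon f^{-1}(X)\to X$ is a proper degree-$d$ branched cover whose source is the disjoint union of $d$ non-empty path-connected components $X_i\subset X$, each restriction $f|_{X_i}\colon X_i\to X$ has positive degree and the degrees sum to $d$; hence each equals $1$, so every $f|_{X_i}$ is a homeomorphism with continuous inverse $\psi_i\colon X\to X_i$. Using the path-connectedness of $X$, I would choose a basepoint $\bar x\in X\setminus P$ and paths $l_i\colon[0,1]\to X\setminus P$ from $\bar x$ to $\psi_i(\bar x)$; the family $r=(l_i)$ is a radial, and by induction $l_{i_1\cdots i_k}(1)=\psi_{i_1}\circ\cdots\circ\psi_{i_k}(\bar x)$ lies inside the nested cylinder $X^{(k)}_\omega:=\psi_{i_1}\circ\cdots\circ\psi_{i_k}(X)\subset X_{i_1}$.

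Next I would show $\phi_r$ is a homeomorphism. By invoking the Poincar\'e- or Euclidean-metric construction used for Lemma \ref{lemmetric}, the $\psi_i$ contract uniformly on a neighborhood of $J$, so $\operatorname{diam}(X^{(k)}_\omega)\to 0$ uniformly in $\omega$; this gives existence of $\phi_r(\omega)$ for every $\omega\in\Sigma_d$ and continuity of $\phi_r$. Injectivity uses the ss-Cantor disjointness directly: if $\phi_r(\omega)=\phi_r(\omega')\in J$ with $\omega\ne\omega'$ first differing at position $k$, then applying $f^{k-1}$ sends this common image to a point of $J\cap\overline{X_{i_k}}\cap\overline{X_{j_k}}$, a contradiction. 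Surjectivity follows because, from $J=f^{-1}(J)$ and the forward invariance of the decomposition, one obtains $J\subset X$; then every $z\in J$ has a unique itinerary $\omega$ with $f^{n-1}(z)\in X_{i_n}$, and the contraction argument forces $\phi_r(\omega)=z$. A continuous bijection from compact $\Sigma_d$ to Hausdorff $J$ is a homeomorphism, so $f$ is t-Cantor.

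The main obstacle is the uniform shrinking of cylinders needed both to define $\phi_r$ and to control it: the ss-Cantor hypothesis is purely topological and does not by itself supply an expanding metric. The way I would handle this is to import the hyperbolic-metric construction on an expanding orbifold cover from the proof of Lemma \ref{lemmetric}, under which the inverse branches $\psi_i$ become strict contractions. Secondary technicalities---verifying $J\subset X$ and choosing the basepoint and connecting paths disjoint from $P$---are routine once the IFS structure is in place, but they each deserve a line of checking.
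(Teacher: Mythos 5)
Your argument follows essentially the same approach as the paper's proof, whose entire content is exactly your radial-construction-plus-injectivity step: choose a radial $r$ whose paths lie inside $X$ so that the cylinders of $\phi_r$ remain nested in the components $X_i$, and deduce injectivity from the ss-Cantor disjointness of the $\overline{X_i}$ along $J$. You have usefully made explicit the well-definedness, continuity, and surjectivity of $\phi_r$ that the paper's two-sentence proof leaves implicit; the one caution is that Lemma~\ref{lemmetric} is stated under the hypothesis that $f$ is geometrically finite, which at this point in the implication chain (ss-Cantor $\Rightarrow$ t-Cantor $\Rightarrow$ d-Cantor $\Rightarrow$ geometrically finite, via Proposition~\ref{propgeof}) is not yet available, so importing that lemma wholesale is logically circular unless one first establishes geometric finiteness independently or instead derives uniform contraction of the inverse branches $\psi_i$ directly (for instance from the Schwarz--Pick lemma on a hyperbolic neighborhood of $J$ inside $X$).
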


\begin{proof}
  Let $X$ and $X_i$ be path-connected sets as in the definition.
Let us take a basepoint $\bar x\in X$.
We have a radial $r=(l_i)$ with $l_i\subset X$ joining $\bar x$
and $x_i\in f^{-1}(\bar x)\cap X_i$.
Then $\phi_r$ is one-to-one
since $\phi_r(i\cdots)\in X_i$.
\end{proof}

\begin{prop}\label{propgeof}
 Suppose that $f$ is d-Cantor.
 Then $J\cap \mathtt{Cr}=\emptyset$.
 In particular, $f$ is geometrically finite and $P\cap\mathtt{Pa}=\emptyset$.
\end{prop}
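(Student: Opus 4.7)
The plan is to first establish $J\cap\mathtt{Cr}=\emptyset$, and then to read off the other two conclusions from the description of the Fatou set for a Cantor map recalled just before Definition \ref{defidomain}. Let $h\colon\Sigma_d\to J$ be a conjugating homeomorphism with $f\circ h=h\circ\sigma$. Since $\sigma$ is exactly $d$-to-one on $\Sigma_d$, every point of $J$ has exactly $d$ distinct preimages lying in $J$ under $f$. On the other hand $J$ is totally invariant, so $f^{-1}(J)=J$, and since $\deg f=d$, every point of $\hat{\mathbb C}$ has exactly $d$ preimages counted with multiplicity, all of which must for a point of $J$ again lie in $J$. Comparing the two counts forces the local degree of $f$ at every preimage of a point of $J$ to be $1$; in particular no point of $J$ can be critical.

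Once $J\cap\mathtt{Cr}=\emptyset$ is known, every critical point lies in $F$, so every critical orbit is contained in $F$ by forward invariance. For Cantor $f$ the excerpt recalls that $F$ is connected and that $\mathtt{At}\cup\mathtt{Pa}$ consists of a single (super)attracting or parabolic fixed point $p$; standard Fatou-component theory (no wandering domains by Sullivan, no rotation domains here) then forces the unique Fatou component $F$ to be the basin of this fixed point, so every orbit in $F$ converges to $p$. Consequently $\overline{P}\subset F\cup\{p\}$, which gives $J\cap\overline{P}\subset\{p\}$, and thus $f$ is geometrically finite. Finally, $\mathtt{Pa}\subset J$ while $P\subset F$, whence $P\cap\mathtt{Pa}\subset P\cap J=\emptyset$.

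The only delicate point is the multiplicity argument in the first paragraph: one must invoke the total invariance of $J$ in order to know that the $d$ preimages counted with multiplicity already live in $J$, since otherwise the equality $\#(f^{-1}(x)\cap J)=d$ could in principle coexist with a ramified preimage of $x$ lying in $F$. Given total invariance this possibility is excluded and the multiplicity-free count forces unramifiedness over $J$. Everything else is essentially bookkeeping with results already in the excerpt.
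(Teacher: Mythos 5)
Your argument for $J\cap\mathtt{Cr}=\emptyset$ is essentially the same as the paper's: the paper also observes that a critical value $c\in J$ would have $\#f^{-1}(c)<d$, contradicting the conjugacy with the exactly $d$-to-$1$ shift (tacitly using total invariance of $J$, which you make explicit). The paper leaves the ``in particular'' clause to the reader; your second paragraph supplies a correct and essentially the intended justification, using the fact recalled earlier that for Cantor $f$ the Fatou set is connected with $\#(\mathtt{At}\cup\mathtt{Pa})=1$.
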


\begin{proof}
 Suppose that $J\cap \mathtt{Cr}$ is not empty.
Let  $c\in J$ be a critical value.
Then $\#f^{-1}(c)<d$.
Thus $f|J$ cannot be topologically conjugate to a $d$-to-$1$
map.
\end{proof}

\begin{prop}\label{propsemiconj}
Suppose that $f$ is Cantor.
Then there exists a continuous semi-conjugacy $\phi:\Sigma_d\to J$.
\end{prop}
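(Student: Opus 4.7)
The plan is to realize $\phi$ as the coding map $\phi_r$ of a suitable radial by invoking the already-proven Lemma~\ref{lemmetric}. The essential preliminary observation is that \emph{every Cantor rational map is geometrically finite}: as recalled just before Definition~\ref{defidomain}, a Cantor map satisfies $\#(\mathtt{At}\cup\mathtt{Pa})=1$, its unique non-repelling fixed point $p$ is (super)attracting or parabolic, and all critical orbits are attracted to $p$; thus $P\subset F$, $J\cap\overline P\subset\{p\}$, and $\overline P$ is a countable compact set whose only accumulation point is $p$.

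To construct the radial I first locate $J$. The characterization of Cantor Julia sets given in the introduction supplies an open topological disc $U$ disjoint from $f(\mathtt{Cr}_f)$ with $\overline{f^{-n}(U)}\subset U$. A Montel-normality argument applied to the family $\{f^{kn}\}$ on the forward-invariant compact set $\hat{\mathbb C}\setminus U$ (which omits the nonempty open set $U$) shows $\hat{\mathbb C}\setminus U\subset F$, whence $J\subset U$. In particular $J\cap\mathtt{Cr}_f=\emptyset$, so every $y\in J$ has exactly $d$ preimages, all in $J$. Next I pick a basepoint $\bar x\in J\setminus\overline P$; this is possible since $J$ is uncountable while $\overline P\cap J$ is finite. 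Because $\overline P$ is a countable compact set with only finitely many accumulation points, its complement $\hat{\mathbb C}\setminus\overline P$ is connected and (being open in a $2$-manifold) path-connected, so I may join $\bar x$ to each of its $d$ preimages by arcs $l_i$ lying in $\hat{\mathbb C}\setminus\overline P$. The tuple $r=(l_i)$ is a radial.

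By Lemma~\ref{lemmetric}, the coding map $\phi_r:\Sigma_d\to J$ is defined on all of $\Sigma_d$ and is continuous. The recursive relation $l_{iw}=l_iL_i(l_w)$ immediately gives $f(l_{i_1\cdots i_k}(1))=l_{i_2\cdots i_k}(1)$, so taking $k\to\infty$ produces the semi-conjugacy identity $f\circ\phi_r=\phi_r\circ\sigma$. Finally $\phi_r(\Sigma_d)$ is a closed subset of $J$ containing the backward orbit $\bigcup_{k\ge 1}f^{-k}(\bar x)$; this backward orbit is dense in $J$ since $\bar x\in J$ lies outside the exceptional set (which, for a Cantor map, is contained in $F$). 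Hence $\phi_r$ is surjective, and $\phi:=\phi_r$ is the desired continuous semi-conjugacy.

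The main technical hurdle is justifying the construction of a radial whose basepoint and paths all avoid $\overline P$; this reduces to the path-connectedness of $\hat{\mathbb C}\setminus\overline P$, which is exactly where the geometric-finiteness structure of $\overline P$ established in the first step does the real work.
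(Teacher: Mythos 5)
Your plan hinges on invoking Lemma~\ref{lemmetric}, which requires $f$ to be geometrically finite, and here there is a real gap. You assert, citing the discussion just before Definition~\ref{defidomain}, that ``all critical orbits are attracted to $p$.'' But the paper establishes there only that $F$ is connected and $\#(\mathtt{At}\cup\mathtt{Pa})=1$; it says nothing about where the critical orbits go. In the paper, geometric finiteness for Cantor-type maps is derived in Proposition~\ref{propgeof}, and that derivation uses the full strength of the \emph{d-Cantor} hypothesis (a topological conjugacy with $\sigma$ forces $J\cap\mathtt{Cr}=\emptyset$, since a critical value in $J$ would have fewer than $d$ preimages). Proposition~\ref{propsemiconj} assumes only ``Cantor'' --- that $J$ is homeomorphic to a Cantor set --- and it is precisely a step in the chain that eventually yields a $d$-to-$1$ conjugacy. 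So your argument is, in effect, presupposing what the proposition is meant to help prove. The same problem recurs when you invoke the topological-disc characterization $\overline{f^{-n}(U)}\subset U$: in the paper that characterization is stated in the abstract only for \emph{hyperbolic} $f$, and you are applying it to a general Cantor rational map.

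The paper's own proof sidesteps all of this and is both more elementary and more general. It takes an arbitrary radial $r$ (needing only that $\hat{\mathbb C}\setminus P$ is path-connected, which you also use) and proves directly that $\phi_r(\omega)$ is well-defined: the set of accumulation points of the nested arcs
$\bigcap_{m\ge 1}\overline{\bigcup_{n\ge m} L_{i_1\cdots i_{m-1}}(l_{i_m\cdots i_n})}$
is a compact connected subset of $J$, hence a singleton because $J$ is totally disconnected; the diameters then shrink to $0$, giving continuity. No expanding metric, no orbifold covering, no geometric finiteness is needed. If you want to salvage your route, you would first have to supply an independent argument that a Cantor rational map has $J\cap\mathtt{Cr}=\emptyset$ (so that it is geometrically finite), and that argument is exactly what is missing.
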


\begin{proof}
It is easily seen that $P$ has a path-connected compliment.
Thus we can take a radial $r=(l_i)$.

For $\omega=i_1i_2\cdots\in\Sigma_d$, the set of accumulation points
$$\bigcap_{m=1}^\infty
\overline{\bigcup_{n=m}^\infty
L_{i_1i_2\cdots i_{m-1}}(l_{i_mi_{m+1}\cdots i_n})}$$
 is a connected subset of $J$, so it is a singleton.
 Here $L_w(\gamma)$ ($w\in\mathtt{Word}(k)$) is the lift of the curve $\gamma:[0,1]\to \hat{\mathbb C}-P$
 by $f^k$ with $L_w(\gamma)(0)=l_w(1)$.
 Moreover the diameter of
 $\bigcup_{n=m}^\infty L_{i_1i_2\cdots i_{m-1}}(l_{i_mi_{m+1}\cdots i_n})$
converges to zero as $m$ tends to $\infty$.
 Thus the coding $\phi_r(\omega)$ is well-defined and continuous.
\end{proof}

\section{Equivalent conditions}
Suppose that $f$ is a geometrically finite rational map with
$J\cap \mathtt{Cr}=\emptyset$.

\begin{defi}
 Let $X$ be a topological space.
 We say that a continuous mapping $a:X\to\hat{\mathbb C}$ is
 {\em homotopically trivial}
 if  there exists a  homotopy
 $H:X\times[0,1]\to\hat{\mathbb C}- P$ between $a$ and a constant
 mapping.
 In particular, if $a(X)\cap P\ne\emptyset$, then $a$ is
 homotopically nontrivial.

 A subset $X\subset\hat{\mathbb C}$ is said to be homotopically trivial
  if
 the inclusion map is homotopically trivial.
\end{defi}

\begin{remark}\label{rem1}
For $Y\subset X$, if $X$ is homotopically trivial,
then $Y$ is also homotopically trivial.
If a simple closed curve $\gamma$ is homotopically trivial,
then one of the connected components of
 $\hat{\mathbb C}-\gamma$ is  also homotopically trivial.
\end{remark}

\begin{defi}\label{defliftedloop}
 We denote by $M_n:S^1\to S^1$, the map $s\mapsto ns \mod 1$ of
  $S^1=\mathbb R/\mathbb Z$ to itself.
 Let $\gamma:S^1\to\hat{\mathbb C}$ be a closed curve.
 We say a closed curve $\gamma':S^1\to \hat{\mathbb C}$ is a
 {\em lifted loop} of $\gamma$ by $f$ if
  $\gamma\circ M_n  =f\circ\gamma'$  for some $n>0$.
\end{defi}

\begin{defi}\label{defiinvc}
 Let $\Omega$ be an expanding domain.

A closed curve $\gamma:S^1\to \Omega-P$
is {\em homotopically invariant} if $\gamma$ is
homotopically nontrivial,
  there exists a lifted loop $\gamma':S^1\to \Omega-P$ of $\gamma$
  by $f^k$  for some $k>0$, and  there
  exists a homotopy $H:S^1\times[0,1]\to \Omega-P$
  between $\gamma$ and $\gamma'$.
  \end{defi}

  \begin{lemma}\label{lem6}
   Let $p\in\mathtt{At}\cup\mathtt{Pa}$, and let $F_0$ be a connected
   component of the Fatou set $F$ with period $n$
   satisfying $p\in\overline{F_0}$, and
  $U$ a simple domain for $p$ and $F_0$.
   Take $U_k$ as in  Definition \ref{defidomain}.
   Let $\mathcal V_k$ be the set of homotopically nontrivial connected
   components of $\hat{\mathbb C} -U_k$.

  Then for large enough $k$, $\mathcal V_k$ is constant up to homotopy
   relative to $P$.
   Moreover, there is $x\in P$ which is not contained in $F_0$
  	 $\iff$ $\mathcal V_k\ne\emptyset$ for any $k$
     $\iff$ there exists $V\in\mathcal V_k$
  	 such that  $\partial V$ is homotopically invariant.
  \end{lemma}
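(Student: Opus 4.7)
My plan is to prove the stabilization claim (a) first and then chase the equivalences in (b) using (a) at the one non-routine step, which is (ii)$\Rightarrow$(iii). For (a), I would start from Lemma~\ref{lemdomain} giving $F_0=\bigcup_k U_k$. Since $f$ is geometrically finite (the standing hypothesis of Section~3), $P$ is finite, so there is $k_0$ with $P\cap F_0\subset U_{k_0}$. For $k\ge k_0$, I classify each $V\in\mathcal V_k$ by the subset of $P-F_0$ enclosed by its fill $\hat V$ (the union of $V$ with all components of $\hat{\mathbb C}-V$ not containing $U_k$). Because $U_{k+1}-U_k\subset F_0$ and $P-F_0$ is disjoint from $F_0$, passing from $U_k$ to $U_{k+1}$ deforms each $\hat V$ to the fill of a unique $V'\in\mathcal V_{k+1}$ with $V'\subset V$ through a homotopy relative to $P$. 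This gives a bijection $\mathcal V_k\to\mathcal V_{k+1}$ preserving homotopy class, proving stabilization.

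For (b), the easy directions are (i)$\Rightarrow$(ii) and the contrapositive of (ii)$\Rightarrow$(i). If $x\in P-F_0$, then $x\notin U_k$ (since $U_k\subset F_0$), so $x$ lies in some component $V$ of $\hat{\mathbb C}-U_k$; as $V\cap P\ne\emptyset$, $V$ is homotopically nontrivial by definition, giving $V\in\mathcal V_k$. Conversely, if $P\subset F_0$, Lemma~\ref{lemdomain} gives $P\subset U_k$ for large $k$, so every component $V$ of $\hat{\mathbb C}-U_k$ is disjoint from $P$; its fill $\hat V$ has connected complement containing all of $P$, hence $\hat V$ lies in a simply connected open subset of $\hat{\mathbb C}-P$ and its inclusion is null-homotopic there, forcing $\mathcal V_k=\emptyset$. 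The implication (iii)$\Rightarrow$(ii) is immediate because $V\in\mathcal V_k$ gives $\mathcal V_k\ne\emptyset$.

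The substantive step is (ii)$\Rightarrow$(iii). Using the stabilization, fix $k$ in the stable range, take $V\in\mathcal V_k$, and let $V'\in\mathcal V_{k+1}$ be its bijection partner with $V'\subset V$ and $\hat V$ homotopic to $\hat{V'}$ relative to $P$. Since $\partial U_{k+1}\subset f^{-n}(\partial U_k)$, some component $\gamma'$ of $f^{-n}(\partial V)$ lies in $\partial V'$ and satisfies $f^n\circ\gamma'=\partial V\circ M_m$ for the appropriate covering degree $m$, so $\gamma'$ is a lifted loop of $\partial V$ by $f^n$. The stabilization homotopy restricts to a homotopy from $\partial V$ to $\gamma'=\partial V'$ in $\hat{\mathbb C}-P$, and by shrinking the simple domains defining the expanding domain $\Omega$ so that $\Omega$ contains this homotopy's compact image, I place the homotopy in $\Omega-P$, making $\partial V$ homotopically invariant. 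The main obstacle will be the careful matching in this last step: identifying the correct preimage component $\gamma'$ of $\partial V$ under $f^n$ as a boundary curve of $V'$, controlling the covering degree $m$ (it may exceed $1$ when critical values of $f^{kn}$ lie on or inside $\partial V$), and verifying that the stabilization homotopy indeed carries $\partial V$ to the chosen $\gamma'$ rather than merely relating $\hat V$ to $\hat{V'}$ as fills.
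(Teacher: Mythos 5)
Your stabilization argument for part (a) is not quite the paper's route and has a circularity: you assert that passing from $U_k$ to $U_{k+1}$ carries each $V\in\mathcal V_k$ to a \emph{unique} $V'\in\mathcal V_{k+1}$, but before stabilization a single $V$ can split into several $V'\in\mathcal V_{k+1}$ each containing a nonempty piece of $V\cap(P-F_0)$; uniqueness is a consequence of stabilization, not its cause. The paper avoids this by observing that the natural map $\mathcal V_{k+1}\to\mathcal V_k$ (send each component to the one of $\hat{\mathbb C}-U_k$ containing it) is surjective, so $\#\mathcal V_k\le\#\mathcal V_{k+1}$, and since this sequence is bounded above by the number of Fatou components meeting $P$, it stabilizes. (Your counting by subsets of $P-F_0$ could be salvaged with the same monotone-bounded argument, but as written it is incomplete.)

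The more serious gap is in (ii)$\Rightarrow$(iii), and you correctly flag it as the obstacle but do not supply the missing idea. You take $V\in\mathcal V_k$ and its companion $V'\in\mathcal V_{k+1}$ with $V'\subset V$ and $\partial V'$ homotopic to $\partial V$, then try to read $\partial V'$ as a lifted loop of $\partial V$ by $f^n$. But $f^n(\partial V')$ is the boundary of \emph{some} $V^*\in\mathcal V_k$, and there is no reason $V^*=V$ — the set $\mathcal V_k$ may be permuted nontrivially by this operation. The paper's fix is precisely this: define the selfmap $V\mapsto (V')^*$ of the finite set $\mathcal V_k$ (where $V'\in\mathcal V_{k+1}$ is the unique companion of $V$ and $(V')^*\in\mathcal V_k$ is defined by $\partial (V')^*=f^n(\partial V')$), find a periodic cycle $V_0\mapsto V_1\mapsto\cdots\mapsto V_m=V_0$, and chain the $m$ homotopies (each in $\Omega-P$) with $m$ lifts to produce a lifted loop of $\partial V_0$ by $f^{mn}$ homotopic to $\partial V_0$. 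Without the finite-set/periodic-orbit step, the homotopy invariance of any single $\partial V$ does not follow, so your proposal does not yet prove (ii)$\Rightarrow$(iii).
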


   \begin{proof}
    Note that each $V\in\mathcal V_k$  is a topological disc
    containing a point in $P$.
   Let $k_0$ be an integer such that $P\cap F_0\subset U_{k_0}$.
   Then $\#\mathcal V_k\le\#\mathcal V_{k+1}$ for $k\ge k_0$.
   Thus there exists $k_1\ge k_0$ such that
   $\#\mathcal V_k$ is constant for  $k\ge k_1$, since
    $\#\mathcal V_k\le \#\{\text{components of $F$
   containing a point of $P$}\}<\infty.$

   Suppose  $k\ge k_1$.
   It is evident that  $P\subset F_0$ if and only if  $\mathcal V_k$ is empty.
  Suppose $\mathcal V_k\ne\emptyset$.
   Then for $V\in \mathcal V_k$, there exists unique $V'\in \mathcal V_{k+1}$
  such that $V'\subset V$ and $(V-V')\cap P=\emptyset$.
   It is easy to see that there exist continuous maps $\gamma:S^1\to
   \partial V$ and $\gamma':S^1\to \partial V'$ which are homotopic in
$\Omega-P$.

   On the other hand,  for $V\in \mathcal V_{k+1}$, there exists
   $V^*\in \mathcal V_k$ such that $\partial V^*=f^n(\partial V)$
   (recall that $n$ is the period of $F_0$).
   Indeed, there exists $V^*$, a connected component of $\hat{\mathbb C}-U_k$
    such that $\partial V^*=f^n(\partial V)$.
   If $V^*\notin\mathcal V_k$, then $V^*$ is homotopically trivial.
   Then  every connected component of $f^{-n}(V^*)$ is also
   homotopically trivial and  simply connected.
   This implies that $V$ is a connected component of $f^{-n}(V^*)$, and
   we arrived at a contradiction.

   Now we consider $V\mapsto V'\mapsto {V'}^{*}$ as a selfmap
   on the finite set $\mathcal V_{k}$.
   It is easy to see that there exists $V\in\mathcal V_{k}$ such that
   $\partial V$ is  homotopically invariant.
   \end{proof}

   \begin{cor}\label{cor2}
    Let $\Omega$ be an expanding domain.
    Then $f^{-n}(\Omega)$ is homotopically nontrivial for any $n$ if and
   	only if there exists a homotopically invariant curve.
   \end{cor}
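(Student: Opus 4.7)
The plan is to prove the two implications of the corollary separately. The $(\Leftarrow)$ direction follows from a standard pull-back construction, while the $(\Rightarrow)$ direction reduces to Lemma \ref{lem6} together with an analysis of the exceptional case in which $P$ is contained in a single Fatou component.

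For $(\Leftarrow)$, suppose $\gamma$ is a homotopically invariant curve with lifted loop $\gamma_1 = \gamma'$ satisfying $f^k \circ \gamma_1 = \gamma \circ M_n$ and a homotopy $H : \gamma \simeq \gamma_1$ in $\Omega - P$. I would inductively construct closed curves $\gamma_m : S^1 \to f^{-mk}(\Omega) \cap (\Omega - P)$, each homotopic to $\gamma$ in $\Omega - P$, by lifting the homotopy $H^{(m)} \circ (M_n \times \mathrm{id})$ through the covering $f^k : f^{-k}(\Omega - P) \to \Omega - P$ starting from the closed lift $\gamma_m$ of $\gamma_{m-1} \circ M_n$. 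This is a genuine covering because $\Omega - P$ avoids every critical value of $f^k$, all of which lie in $P$. Each $\gamma_m$ inherits homotopic nontriviality from $\gamma$ and lives in $f^{-mk}(\Omega)$. For any $n$, choosing $m$ with $mk \geq n$ shows that the decreasing set $f^{-n}(\Omega) \supset f^{-mk}(\Omega)$ contains the nontrivial loop $\gamma_m$, hence is homotopically nontrivial.

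For $(\Rightarrow)$, I would argue by contrapositive. Suppose no homotopically invariant curve exists in $\Omega - P$. Lemma \ref{lem6} applied to each periodic Fatou component $F_0$ then forces $P \subset F_0$. Since distinct periodic Fatou components are disjoint and $P \ne \emptyset$, there is at most one such $F_0$; moreover, if it had period greater than $1$, forward-invariance of $P$ together with $f(F_0) \ne F_0$ would force $f(P) \subset F_0 \cap f(F_0) = \emptyset$, which is impossible. So the unique attracting or parabolic cycle is a fixed point $p$ with basin $F_0 \supset P$, and $W$ reduces to a single simple domain $U \subset F_0$, so $\Omega = \hat{\mathbb C} - U$. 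Another application of Lemma \ref{lem6} (in contrapositive form) yields $P \subset U_n$ for all sufficiently large $n$. Then $f^{-n}(\Omega) \subset \hat{\mathbb C} - U_n$, and $\hat{\mathbb C} - U_n$ is a topological disc disjoint from $P$ and therefore homotopically trivial; a subset of a trivial set is trivial, contradicting the hypothesis.

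The main obstacle is verifying in $(\Leftarrow)$ that each iterated lift $\gamma_{m+1}$ actually closes up into a loop; this amounts to a uniqueness-of-path-lifting argument showing that the two boundary paths $s \mapsto \tilde H(0, s)$ and $s \mapsto \tilde H(1, s)$ agree for all $s$, which follows because both project to the same path under $f^k$ (using $M_n(0) = M_n(1) = 0$) and coincide at $s = 0$ (since $\gamma_m$ is a loop). A secondary technical point in $(\Rightarrow)$ is that the curve supplied by Lemma \ref{lem6} must be homotopically invariant relative to our $\Omega = \hat{\mathbb C} - W$ rather than merely relative to $\hat{\mathbb C} - U$; this holds because the boundary curves $\partial V$ and the annular homotopy connecting $\partial V$ to $\partial V'$ lie in $U_{k+1} \setminus U_k \subset F_0 \setminus U$, which is disjoint from every other simple domain of $W$ since those sit in distinct Fatou components.
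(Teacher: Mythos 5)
Your proof is correct and follows essentially the same route as the paper's: the nontrivial implication (no invariant curve $\Rightarrow$ eventual triviality) is Lemma~\ref{lem6} plus the observation that $\hat{\mathbb C}-U_k$ and hence $f^{-k}(\Omega)$ is homotopically trivial once $P\subset U_k$, and the other implication is the iterated-lifting argument. The paper phrases both directions contrapositively and compresses the iterated lifting into one sentence (``any connected component of $f^{-k}(\gamma)$ is homotopically trivial''), while you unfold it into the direct construction of the sequence $\gamma_m\subset f^{-mk}(\Omega)$; these are the same argument, and the details you add (the period-one reduction via forward-invariance of $P$, the closing-up of the lifted homotopy, and the check that Lemma~\ref{lem6}'s $\partial V$ is invariant relative to the full $\Omega-P$) are correct elaborations of what the paper leaves implicit. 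One tiny imprecision: $\hat{\mathbb C}-U_n$ need not be a single topological disc but rather a finite disjoint union of discs each missing $P$, which is still homotopically trivial in the sense used here.
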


   \begin{proof}
    We use the notation of Lemma \ref{lem6}.
    Suppose that there exists no homotopically invariant curve.
    By Lemma \ref{lem6}, $P\subset U_k$ for some $k$.
    Then $\hat{\mathbb C}-U_k$ is homotopically trivial, and so
    is $f^{-k}(\Omega)$.

    Conversely, suppose that $f^{-k}(\Omega)$ is homotopically trivial
    for some $k$.
    For any curve $\gamma\subset\Omega$, any connected component of
    $f^{-k}(\gamma)$ is homotopically trivial.
    Thus there exists no homotopically invariant curve.
 \end{proof}

 \begin{lemma}\label{lemmajc}
Let $\Omega$ be an expanding domain, and $\gamma\subset\Omega$
 a homotopically invariant curve.
  Then there exists a closed curve $\bar\gamma\subset J$
  which is a uniform limit of a sequence of curves
homotopic to $\gamma$ in $\Omega-P$.
The curve is nonconstant and
$f^k:\bar\gamma\to f(\bar\gamma)$ is of degree more than one,
where $k$ is the period in the Definition \ref{defiinvc}.
 \end{lemma}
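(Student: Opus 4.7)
The plan is to iterate the lifting supplied by homotopical invariance and pass to a uniform limit.  First set $\gamma_0 := \gamma$ and $\gamma_1 := \gamma'$, and let $H^{(0)}: S^1 \times [0,1] \to \Omega - P$ denote the given homotopy, with $\gamma_0 \circ M_n = f^k \circ \gamma_1$.  Because $f^{-1}(\Omega) \subset \Omega$ (expanding domain condition) and every critical value lies in $P$, one has $f^{-k}(\Omega - P) \subset \Omega - P$, so I will inductively lift the reparametrized homotopy $(s,t) \mapsto H^{(m-1)}(ns, t)$ through $f^k$, starting from $\gamma_m$ at $t = 0$.  This produces a pair $(\gamma_{m+1}, H^{(m)})$ with $H^{(m)}: \gamma_m \simeq \gamma_{m+1}$ in $\Omega - P$ and $\gamma_m \circ M_n = f^k \circ \gamma_{m+1}$, so that every $\gamma_m$ is homotopic to $\gamma$ in $\Omega - P$.

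To prove the curves converge I pass to the orbifold universal cover $p: S \to \Omega$ of Lemma~\ref{lemmetric}.  The initial homotopy $H^{(0)}$ pins down a definite composition $g_w$ ($|w| = k$) of the contracting inverse branches $g_i$, and the same $g_w$ is used at every stage.  Crucially, for each fixed $s$ the path $t \mapsto H^{(m)}(s, t)$ is the lift of the \emph{single} path $t \mapsto H^{(m-1)}(ns, t)$ through $f^k$ (no $n$-fold concatenation appears in the $t$-variable), so writing $\alpha \in (0,1)$ for the contraction factor of each $g_i$,
\[
\sup_{s \in S^1} \mathrm{length}\, H^{(m)}(s, \cdot) \le \alpha^k \sup_{s \in S^1} \mathrm{length}\, H^{(m-1)}(s, \cdot) \le (\alpha^k)^m L_0,
\]
where $L_0 := \sup_s \mathrm{length}\, H^{(0)}(s, \cdot)$.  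Hence $d_\infty(\gamma_m, \gamma_{m+1}) \le (\alpha^k)^m L_0$ is summable, $(\gamma_m)$ is Cauchy in $C^0(S^1, \Omega)$, and it converges uniformly to a continuous limit $\bar\gamma: S^1 \to \Omega$.

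Because $\gamma_m(S^1) \subset f^{-mk}(\Omega)$ and every Fatou-set point leaves $\Omega$ after finitely many iterates, $\bigcap_m f^{-mk}(\Omega) = J$, and therefore $\bar\gamma(S^1) \subset J$.  Passing to the limit in $f^k \circ \gamma_m = \gamma_{m-1} \circ M_n$ gives $f^k \circ \bar\gamma = \bar\gamma \circ M_n$, so $f^k$ carries $\bar\gamma(S^1)$ onto itself with degree $n$.  For non-constancy (and hence $n \ge 2$): a constant limit $\bar\gamma \equiv z_0 \in J$ would force the $\gamma_m$ to collapse into arbitrarily small disks about a periodic point $z_0$ of $f^k$ in $J$; since $J \cap \overline P$ is finite, those disks eventually meet $P$ in at most $\{z_0\}$, and the local degree of $f^k$ at such a $z_0$ (combined with the homotopical nontriviality of $\gamma$ in $\hat{\mathbb{C}} - P$) yields a contradiction.

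The main obstacle will be the length estimate underlying uniform convergence: I must verify both that $H^{(0)}$ consistently selects a single contracting branch $g_w$ in the universal cover, and that lifting the $t$-direction of the homotopy through $f^k$ truly contracts by $\alpha^k$ with no hidden $n$-fold concatenation.  The non-constancy step is then a short case analysis separating repelling from parabolic hypothetical limit points in $J$.
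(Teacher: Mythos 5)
Your overall strategy — iterate the lifting supplied by homotopical invariance, pass to the orbifold cover to get contraction, and show the limit lies in $J$ — is exactly the paper's intended route, and your convergence argument is a useful fleshing-out of the paper's one-line claim that the $\gamma_m$ converge. Two remarks on the details. First, the worry you flag about $H^{(0)}$ "pinning down a single $g_w$" is a red herring: the inverse branch selected by the lift does vary with $s$ (the lift of $\gamma_m(s)$ in the universal cover moves between sheets), but since every composition of $k$ inverse branches contracts by the same factor $\alpha^k$ in the Poincar\'e/Euclidean metric of Lemma~\ref{lemmetric}, the estimate $\sup_s \mathrm{length}\, H^{(m)}(s,\cdot)\le\alpha^k\sup_s\mathrm{length}\, H^{(m-1)}(s,\cdot)$ holds without any uniqueness of branch. (The genuine subtlety is the parabolic case, where $\alpha$ must be replaced by the weak contraction $h$ of Lemma~\ref{lemmetric}; the lengths then still decay to $0$, but summability of $d_\infty(\gamma_m,\gamma_{m+1})$ requires a word, and the paper implicitly appeals to Lemma~\ref{lemmetric}/the Tan--Yin machinery for this.)

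The real gap is the non-constancy step. Your reasoning that "since $J\cap\overline P$ is finite, those disks eventually meet $P$ in at most $\{z_0\}$" is not correct: since $J\cap\mathtt{Cr}=\emptyset$ one has $P\subset F$, hence $z_0\in J$ implies $z_0\notin P$ at all, but $z_0\in\overline P$ is perfectly possible (a parabolic point attracting a critical orbit), in which case every disk around $z_0$ contains \emph{infinitely} many points of $P$. And the appeal to "the local degree of $f^k$ at $z_0$" does not produce a contradiction, because $J\cap\mathtt{Cr}=\emptyset$ forces that local degree to be $1$. The clean argument (and the one the paper's terse "nonconstant since homotopically nontrivial" is pointing at) is this: the homotopies $H^{(m)}$ have $\rho$-lengths shrinking to $0$, so the concatenation $H^{(0)}*H^{(1)}*\cdots$, reparametrized to $S^1\times[0,1)$ and extended at $t=1$ by $\bar\gamma$, is a \emph{continuous} homotopy from $\gamma$ (really $\gamma_1$, but $\gamma_1\simeq\gamma$) to $\bar\gamma$ inside $\Omega-P$ — using that $\bar\gamma\subset J$ and $J\cap P=\emptyset$. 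Hence $\bar\gamma$ is homotopically nontrivial in $\hat{\mathbb C}-P$; a constant map to a point of $\hat{\mathbb C}-P$ is homotopically trivial, so $\bar\gamma$ is nonconstant, and then $n\ge2$ follows exactly as you say (otherwise $\bar\gamma(S^1)\subset\mathrm{Fix}(f^k)$, a finite set, forcing $\bar\gamma$ constant). Replacing your disk/local-degree case analysis with this concatenated-homotopy argument closes the gap and matches the paper's logic.
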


\begin{proof}
 We have a closed curve  $\gamma_1=\gamma':S^1\to \Omega-P$
 and a homotopy  $H_0=H:S^1\times [0,1]\to \Omega$
 as in Definition \ref{defiinvc}.
We inductively obtain
 curves $\gamma_m$ such that $\gamma_{m-1}\circ M_n=f^k\circ \gamma_m$
 and homotopies $H_m:S^1\times [0,1] \to \Omega-P$ ($m=1,2,\dots$)
 between $\gamma_{m-1}$ and $\gamma_m$.
 Since $\gamma_m$ is included in an expanding domain,
 it is easy to see that
$\gamma_m$ uniformly converges to a continuous curve
 in $J$.
 The limit $\bar\gamma$ is nonconstant since it is
 homotopically nontrivial.
Thus $f^k:\bar\gamma\to f(\bar\gamma)$ is of degree more than one.
\end{proof}

For a (super)attracting or parabolic $n$-periodic point $p$,
we use the notaion
$$A(p):=\{z\in F\,:\,\lim_{k\to\infty} f^{kn}(z)=p\}$$
only in this section.

 \begin{defi}
 We say that a fixed point $p$ is {\em solitary} if $p$
 is either (super)attracting or parabolic with the local form
 $z\mapsto z+az^2+\cdots$ around $p$ with $a\ne0$.
 Then the unique Fatou component $A_0(p)$ whose closure contains $p$
 is the {\em immediate  basin of attraction} of $p$.
 \end{defi}

 \begin{defi}\label{defmono}
  Fix a basepoint $\bar x\in\hat{\mathbb C}-P$.
 For a set $X$, the symmetric group of $X$ is
 denoted by $\mathfrak{S}(X)$.
 The {\em monodromy} $\alpha_f:
 \pi_1(\hat{\mathbb C}-P,\bar x)\to\mathfrak{S}
 (f^{-1}(\bar x))$ is an antihomomorphism defined by $\alpha_f([\gamma])(x)
 =y\iff L_x(\gamma)(1)=y$, where $\gamma:[0,1]\to
 \hat{\mathbb C}-P$ is a loop with $\gamma(0)=\gamma(1)=
 \bar x$, and $L_x(\gamma)$ is the lift of $\gamma$ by
 $f$ satisfying
 $L_x(\gamma)(0)=x$.

 Note that $[\gamma]\in\ker\alpha_f \iff$  $L_x(\gamma)$
 is a closed curve for every $x\in f^{-1}(\bar x)$.
 Thus $\ker \alpha_{f^n}\subset\ker \alpha_{f^m}$ if $n>m$.
 Denote $$\alpha_\infty:=\prod_{n>0} \alpha_{f^n}:
 \pi_1(\hat{\mathbb C}-P,\bar x)\to\prod_{n>0}\mathfrak{S}
 (f^{-n}(\bar x)).$$
  Then $\ker\alpha_\infty=\bigcap_{n>0}\ker\alpha_{f^n}$.
  The image of $\alpha_\infty$ is called the {\em iterated
  monodromy group} (see \cite{BarGriNekr03},\cite{Ne05}).

Let $\Omega$ be an expanding domain.
The image of $$\pi_1(\Omega-P,\bar x)\hookrightarrow \pi_1(\hat{\mathbb C}-P,\bar x)
\overset{\alpha_\infty}{\to}\prod_{n>0}\mathfrak{S}
(f^{-n}(\bar x))$$
is called the {\em bounded iterated monodromy group} for $\Omega$.
  \end{defi}

\begin{theorem}\label{th1}
 Let $f$ be a geometrically finite rational map of degree $d$ with
 $J\cap \mathtt{Cr}=\emptyset$.
   Take  an expanding domain   $\Omega$.
 The following are equivalent.
\begin{enumerate}
  \item\label{c} $f$ is Cantor.
\item\label{dc} $f$ is d-Cantor.
 \item\label{wdc} There exist $n>0$  such that $f^n$
      is s-Cantor.
\item\label{f} The Fatou set is connected.
\item\label{jl}
Every connected component of $J$ is simply connected.

 \item\label{jt} $J$ is homotopically trivial in
$\hat{\mathbb C}-P$.
\item\label{cyc} There is no homotopically invariant curve.
 \item\label{cc} There exists  $n>0$ such that for any  closed
      curve $\gamma\subset\Omega$,
any lifted loop of $\gamma$ by $f^{-n}$ is homotopically trivial.
 \item\label{ex}
      There exists $n>0$ such that $f^{-n}(\Omega)$ is homotopically trivial
in $\hat{\mathbb C}-P$.

 \item\label{ab} There exists a solitary fixed point $p$
 such that
 $A_0(p)$ includes $\mathtt{Cr}$.
\item\label{wab} There exists a solitary fixed point $p$
 such that $A_0(p)$
contains at least $2d-4$ critical values counted
with multiplicity, and $A(p)$ includes $P$.
\item\label{c0-5}
     The bounded iterated monodromy group
     $\pi_1(\Omega-P,\bar x)/\ker \alpha_{\infty}$
     is a finite group.
%\item There exists a solitary fixed point,
%and for every radial $r$, the multiplicity
%of the coding map $\phi_r:\Sigma_d\to J$ is a power
%of $d$.
\end{enumerate}
\end{theorem}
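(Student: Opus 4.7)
The plan is to arrange the twelve conditions into a single cycle,
\[(10)\Rightarrow(11)\Rightarrow(3)\Rightarrow(2)\Rightarrow(1)\Rightarrow(4)\Rightarrow(5)\Rightarrow(6)\Rightarrow(7)\Rightarrow(10),\]
together with the side equivalences $(7)\Leftrightarrow(8)\Leftrightarrow(9)$ (essentially Corollary \ref{cor2}) and a pair of implications $(1)\Rightarrow(12)\Rightarrow(7)$ that weaves the monodromy condition into the cycle. The homotopy-theoretic machinery has already been packaged into Lemmas \ref{lem6}, \ref{lemmajc} and Corollary \ref{cor2}, which convert between homotopically invariant curves, Fatou-component combinatorics, and approximations of curves in $J$.

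The easy chain dispatches quickly. $(3)\Rightarrow(2)$: if $f^n$ is s-Cantor with disc $\overline D$, the $d^n$ injective inverse branches of $f^n$ on $\overline D$ form a contracting IFS (in the Poincar\'e metric of a slightly enlarged disc) whose attractor is $J$, yielding a conjugacy between $(f^n,J)$ and $\Sigma_{d^n}$ and hence between $(f,J)$ and $\Sigma_d$. $(2)\Rightarrow(1)$ is trivial; $(1)\Rightarrow(4)$ is the Moise result already cited; $(4)\Leftrightarrow(5)$ is standard planar topology. For $(5)\Rightarrow(6)$, when every component of $J$ is simply connected and disjoint from $P$, one embeds them into a single topological disc of $\hat{\mathbb C}-P$. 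For $(6)\Rightarrow(7)$: by Lemma \ref{lemmajc} a homotopically invariant curve gives a nonconstant closed curve in $J$ that is the uniform limit of homotopically nontrivial loops, contradicting $(6)$. The side equivalences $(7)\Leftrightarrow(9)$ are Corollary \ref{cor2}, and $(8)\Leftrightarrow(9)$ is immediate since lifted loops of curves in $\Omega$ are precisely the boundary curves of components of $f^{-n}(\Omega)$. To close the cycle, $(7)\Rightarrow(10)$ is proved by applying Lemma \ref{lem6} at every solitary periodic point $p$: the absence of invariant curves forces $\mathcal V_k=\emptyset$ eventually, so $P\subset A(p)$, and a Fatou-component count pins down a unique solitary fixed point with $A_0(p)\supset\mathtt{Cr}$. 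Finally $(10)\Rightarrow(11)$ is trivial.

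The main obstacle is $(11)\Rightarrow(3)$: from only $2d-4$ critical values in $A_0(p)$ together with $P\subset A(p)$, one must produce an s-Cantor disc for some $f^n$. Starting from a round $\overline B\ni p$ with $f(\overline B)\subset B$, I would iteratively pull back: a Riemann--Hurwitz count shows that once the component $B_1$ of $f^{-1}(\overline B)$ containing $\overline B$ absorbs at least $2d-4$ of the $2d-2$ branches, the obstruction to its being a disc is at most one extra critical value, which subsequent pullbacks absorb because $P\subset A(p)$. After finitely many steps, a pullback $\overline{B_n}$ contains $P$, is a topological disc disjoint from the critical values of $f^n$, and satisfies $f^{-n}(\overline{B_n})\subset\overline{B_n}$, establishing $(3)$. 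To thread $(12)$ in: $(1)\Rightarrow(12)$ because, when $J$ is Cantor, backward iterates of any non-exceptional point accumulate on $J$, and expansion on $J$ shrinks the lifts of any fixed loop in $\Omega-P$; hence for each generator of the finitely generated group $\pi_1(\Omega-P,\bar x)$ almost all lifts are null-homotopic, so $\alpha_{f^n}$ is eventually trivial and the bounded iterated monodromy group is finite. Conversely $(12)\Rightarrow(7)$: a homotopically invariant curve together with Lemma \ref{lemmajc} would produce loops whose action on the preimage tree has an infinite orbit, contradicting finiteness of the bounded IMG. The Riemann--Hurwitz bookkeeping in $(11)\Rightarrow(3)$ is where I expect the main technical difficulty to lie.
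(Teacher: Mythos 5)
Your plan replaces the paper's key implication $(11)\Rightarrow(10)\Rightarrow(3)$ with a direct $(11)\Rightarrow(3)$, and this is where your sketch breaks down in two concrete ways.

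First, the stated goal is internally inconsistent: you want a pullback $\overline{B_n}$ that ``contains $P$, is a topological disc disjoint from the critical values of $f^n$.'' The critical values of $f^n$ are the points $f^k(c)$, $c\in\mathtt{Cr}$, $1\le k\le n$, which all lie in $P$; a disc containing $P$ cannot avoid them. What one actually wants is a disc $\overline D$ on the \emph{opposite} side --- essentially the complement of a neighborhood of $A_0(p)$ --- and producing such a $\overline D$ with no postcritical point in it is exactly the content of the tree construction in the paper's proof of $(10)\Rightarrow(3)$: one must cut out a tree $\Gamma$ through $P\cap V$ so that $V'=V-\Gamma$ is simply connected, misses $P$, and is forward-invariant in the right sense. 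This step is substantial and does not fall out of pulling back a small disc around $p$.

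Second, and more fundamentally, your sentence ``which subsequent pullbacks absorb because $P\subset A(p)$'' does not handle the real obstruction in $(11)\Rightarrow(10)$. Condition $(11)$ only guarantees $2d-4$ critical values in $A_0(p)$, leaving open the possibility that a critical value lives in $A(p)-A_0(p)$. If $v\in A(p)-A_0(p)$, then $v$ is in a Fatou component disjoint from $A_0(p)$, and the nested pullbacks $B_n$ of a disc around $p$ stay inside $A_0(p)$, so they never absorb $v$. The paper's proof of $(11)\Rightarrow(10)$ is precisely an Euler-characteristic/Riemann--Hurwitz computation over the annular pieces $U_{n+1}^i-\overline{U_n}$ that shows the ``extra'' branches force either $s=0$ (everything already in $A_0(p)$) or an invariant curve of degree $>1$, which Lemma~\ref{lemmajc} rules out. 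Your sketch has no analogue of this dichotomy; the Riemann--Hurwitz bookkeeping you flag as difficult is not merely technical bookkeeping but the core of the argument, and the framework you set up (pullbacks of $\overline B$ inside $A_0(p)$) cannot see the problematic critical values.

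Two smaller gaps worth noting: $(5)\Rightarrow(6)$ is not a step the paper makes, and it is not obvious --- a connected simply-connected compactum in $\hat{\mathbb C}-P$ need not be contractible, and the inclusion of a general compactum with simply connected components into $\hat{\mathbb C}-P$ is not automatically null-homotopic unless you already know $J$ is Cantor (which would be circular here). The paper avoids this by proving $(5)\Rightarrow(7)$ via Lemma~\ref{lemmajc} and obtaining $(6)$ separately from $(3)$. Also, in $(3)\Rightarrow(2)$ the ``and hence between $(f,J)$ and $\Sigma_d$'' at the end needs an argument: an IFS for $f^n$ gives a conjugacy of $f^n|_J$ to $\Sigma_{d^n}$, but passing to $f|_J\cong\Sigma_d$ requires checking that $f^{-1}(\overline D)$ already splits into $d$ disjoint discs inside $\overline D$ (which in fact requires knowing $f^{-1}(\overline D)\subset\overline D$, not just $f^{-n}(\overline D)\subset\overline D$). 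The paper sidesteps this by routing $(3)\Rightarrow(1)$ through the Cantor property alone and obtaining $(2)$ from $(10)$ via an explicit construction.
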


\begin{proof}
`(\ref{dc} or  \ref{wdc}) $\Rightarrow$
\ref{c} $\Rightarrow$ \ref{jl}' and `\ref{ex} $\Rightarrow$ \ref{cc} $\Rightarrow$ \ref{cyc}'
 and
 `\ref{f} $\Rightarrow$ \ref{ab} $\Rightarrow$ \ref{wab}' are trivial.

\ref{cyc} $\Rightarrow$ \ref{ex} by Corollary \ref{cor2}.

\ref{cyc} $\Rightarrow$ \ref{ab} by Lemma \ref{lem6}.

\ref{ab} $\Rightarrow$ \ref{ex}.
Using the notation of  Lemma \ref{lem6},
each connected component of $\hat{\mathbb C}-U_k$ is homotopically
 trivial for large $k$.
 By Remark \ref{rem1}, $f^{-k}(\Omega)$ is homotopically trivial.

\ref{wdc} $\Rightarrow$ \ref{jt}.
Let $r$ be a radial for $f^n$.
For $\omega\in\Sigma_{d^n}$, we have a path $l_\omega:[0,1]\to
\hat{\mathbb C}-P$ such that $l_\omega(0)=\bar x,
l_{\omega}(1)=\phi_r(\omega)$.
Thus if $\phi_r$ is homeomorphic, then the map $J
\times[0,1]\to \hat{\mathbb C}-P$ defined by
$(z,t)\mapsto l_{\phi_r^{-1}(z)}(1-t)$
is a homotopy between the inclusion map of $J$ and a constant map.

\ref{ab} $\Rightarrow$ \ref{dc} and \ref{wdc}.
Suppose $P\subset A_0(p)$.
 Let $U$ be a simple domain of $p$, and take
$U_k$ as in Definition \ref{defidomain}.
Then $P\subset U_n$ for some $n$.
The simply connected domain $V=\hat{\mathbb C}-\overline U$
 satisfies $f^{-n}(V)\subset V$ and that any connected component of $V-f^{-n}(V)$
other than $U_n-\overline U$ contains no postcritical point.

 There exists a topological tree
 $\Gamma\subset U_n-U\subset \overline V-f^{-n}(V)$
 satisfying:
\begin{enumerate}
 \item $V':=V-\Gamma$ is connected and simply connected,
 \item $\Gamma$ includes $P\cap V$,
 \item For $k=1,2\dots,n-1$ and every connected component $W$ of $f^{-k}(V)$,
       $W-\Gamma$ is connected.
\end{enumerate}
 Indeed, let $C_i,i=1,2,\dots,m$ be the connected components of
 $\bigcup_{k=0}^{n-1}f^{-k}(\partial V)\cap U_n$, and
$W_j,j=1,2,\dots,s$ be the connected components of $(U_n\cap V)-\bigcup_i C_i$.
Take  points $a_i\in C_i$ and $b_j\in W_j$ for each $i,j$.
 Construct a tree $\Gamma$ with vertex set $\{a_i,b_j\}\cup P\cap V$ and
 edge set $\{e_{ij}:C_i\subset\partial W_j\}\cup\{e_{qj}:q\in P\cap W_j\}$,
 where $e_{ij}$ joins $a_i$ and $b_j$ in $W_j$, and
$e_{qj}$ joins $q$ and $b_j$ in $W_j$.

 Then $f^{-1}(V')$ has exactly $d$ connected components, and so
 the inverse of $f:f^{-1}(V')\to V'$ has $d$ branches.
 Let $g_i:J\to J,i=1,2,\dots,d$ be the branches restricted on the Julia set.

The following fact is crucial.
 For any $w=i_1i_2\cdots i_k\in \mathtt{Word}(k)$,
 some connected component of $f^{-k}(V)$ includes $g_w(J)$,
  where $g_w=g_{i_1}\circ g_{i_2}\circ \cdots\circ g_{i_k}$.
 Indeed, first $J\subset V$, and by induction
 suppose that some connected component $E$ of $f^{-k}(V)$ includes $g_w(J)$.
Then the connected component of $f^{-k}(V)-\Gamma$ included in $E$ includes
$g_w(J)$ by the third condition of $\Gamma$.
Hence for $i\in\mathtt{Word(1)}$, $g_i(g_w(J))$ is included in a connected
component of $f^{-k-1}(V)$.

 Let  $\rho(x,y)$ be the distance  introduced in Lemma
 \ref{lemmetric}.
If $k> n+1$, then $f$ is injective on each connected component of
$f^{-k}(V)$.
Thus the diameters in $\rho$ of connected components of $f^{-k}(V)$
uniformly converge to zero as $k\to\infty$, and consequently
 so do the diameters of $g_w(J),w\in\mathtt{Word}(k)$.
Therefore  there exists
 a conjugacy between $(f,J)$ and $(\sigma,\Sigma_d)$.
Thus \ref{dc} is verified.

It is easier to show \ref{wdc}.
Indeed, take a topological disc $\overline D$ with
$f^{-n}(V')\subset\overline D\subset V'$.

(\ref{jl} or \ref{jt}) $\Rightarrow$ \ref{cyc}.
Assume that there exists a homotopically invariant curve $\gamma$.
 By Lemma \ref{lemmajc}, we have a closed curve $\bar\gamma$
 in $J$ homotopic to $\gamma$, which is not homotopically trivial.
 The connected component of $J$ including $\bar\gamma$ is neither
 simply connected nor homotopically trivial.

 \ref{wab} $\Rightarrow$ \ref{ab}.
Suppose that \ref{wab} is satisfied.
Let $U$ be a simple domain of $p$, and take $U_k$
as in Definition \ref{defidomain}.
By Lemma \ref{lemdomain},
for large $n>0$,  $(P\cup f^{-1}(p))\cap A_0(p)\subset U_n$.
Let $U_{n+1}^0=U_{n+1}$, and let
$U_{n+1}^i,i=1,2,\dots,s$ be the connected components of $f^{-1}(U_n)$
other than $U_{n+1}$.
Note that $U_{n+1}^i\not\subset A_0(p)$ for $i=1,2,\dots,s$
since $f^{-1}(p)\cap A_0(p)
\subset U_n$.

Let $W=U_n-\bigcup_{a:\text{ critical value}} B_a$,
and ${W^i}=U_{n+1}^i-\bigcup_a f^{-1}(B_a)$
the connected component of $f^{-1}(W)$ included in
$U_{n+1}^i$, where $B_a$ is a small disc around $a$.
Let $m_i$ be the number of critical points in $W^i$
counted with multiplicity, and $d_i=\mathrm{deg}(f:W^i\to W)$.
Note $\sum_{i=0}^sd_i=d$.

We show
\begin{equation}\label{eq:eu}
\sum_{\gamma:\text{ connected component of }
\partial U_{n+1}^i}
(\mathrm{deg}(f:\gamma\to f(\gamma))-1)=2d_i-2-m_i
\end{equation}
for $i=0,1,\dots,s$.
To this end, first note that
$\chi(W^i)=d_i\chi(W)$, where $\chi$ denotes the Euler
characteristic.
We have
$$\begin{array}{cl}
\#\{\text{connected components of } \partial W^i\}&
=-\chi(W^i)+2=-d_i\chi(W)+2,\\
\#\{\text{connected components of } \partial W\}&
=-\chi(W)+2.
\end{array}
$$
This implies that
\begin{multline*}
\sum_{a\in \mathtt{Cr}\cap U_{n+1}^i}(\mathrm{deg}(f:\partial B'_a\to
\partial B_{f(a)})-1)\\+
\sum_{\gamma:\text{ connected component of }
\partial U_{n+1}^i}
(\mathrm{deg}(f:\gamma\to f(\gamma))-1)
\\
=
\sum_{\gamma:\text{ connected component of } \partial W^i}
(\mathrm{deg}(f:\gamma\to f(\gamma))-1)\\
=d_i(-\chi(W)+2)-(-d_i\chi(W)+2)=2d_i-2,
\end{multline*}
where $B_a'$ is a connected component of $f^{-1}(B_{f(a)})$
including $a$.
Note that
$\sum_{a\in \mathtt{Cr}\cap U_{n+1}^i} (\mathrm{deg}(f:\partial B'_a\to
\partial B_{f(a)})-1)=m_i$.
Hence (\ref{eq:eu}) holds.

We have supposed that $U_n$ has $2d-4$ critical values counted with
multiplicity, in other words, $\sum_{i=0}^sm_i\ge 2d-4$.
From (\ref{eq:eu}), we have
$$0\le \sum_{i=0}^s(2d_i-2-m_i)\le2d-2(s+1)-(2d-4)
=2-2s,$$ so we have $s=0$ or $1$.

If $s=0$, then $f^{-1}(A_0(p))=A_0(p)$, and so
$A(p)=A_0(p)$, and every critical point is  contained in $A_0(p)$.

Suppose $s=1$.
It follows from the above inequality that
 $2d_i-2-m_i=0\ (i=0,1)$, and thus
 \begin{equation}\label{eq:m0m1}
   m_0+m_1=2d-4.
   \end{equation}
We have
$$0=2d_i-2-m_i=\sum_{\gamma:\text{ connected component of }
\partial U_{n+1}^i} (\mathrm{deg}(f:\gamma\to f(\gamma))-1)$$ for $i=0,1$.
This means that for any connected component $\gamma\subset \partial U_n$ and
any connected component $\gamma'\subset f^{-1}(\gamma)$, the degree
of $f:\gamma'\to\gamma$ is one.
However, the equality (\ref{eq:m0m1}) together with
the fact that the whole sphere has $2d-2$ critical values counted with
multiplicity implies that $A(p)-A_0(p)$ contains a
critical value.
We have an invariant curve $\tilde\gamma$ which
is a  connected component of $\partial U_N$ for large $N>0$
by Lemma \ref{lem6}.
Moreover, $\deg(f^k:\tilde\gamma'\to\tilde\gamma)>1$ by Lemma \ref{lemmajc}.
This is a contradiction, and so $s=0$.

%From $0\le 2d_i-2-m_i,i=0,1$, and $d_0+d_1=d,m_0+m_1=2d-4$, we have
%$$0=2d_i-2-m_i=\sum_{\gamma:\text{ connected component of }
%\partial U_{n+1}^i} (\mathrm{deg}(f:\gamma\to f(\gamma))-1)$$ for $i=0,1$.

\ref{ex} $\Rightarrow$ \ref{c0-5}.
 Suppose that every connected component of
 $f^{-n}(\Omega)$ is homotopically trivial.
 To see  $\alpha_\infty(\pi_1(\Omega-P,\bar x))\subset
 \prod_{k=0}^\infty\mathfrak S(f^{-k}(\bar x))$ is a finite group,
 we show that the projection
 $\alpha_\infty(\pi_1(\Omega-P,\bar x))\to
 \mathfrak S(f^{-n}(\bar x))$ is injective.
 To this end, let $\gamma:[0,1]\to\Omega-P$ be a closed curve such that
 $g=[\gamma]\in \pi_1(\Omega-P,\bar x)$ satisfies $\alpha_{n}(g)=1$.
 We denote by $L_{x,k}(\gamma)$ the lift of $\gamma$ by $f^k$ with
 $L_{x,k}(\gamma)(0)=x$.
 Then $L_{x,n}(\gamma)$ is a closed curve for any $x\in f^{-n}(\bar x)$.
 This implies that $L_{x,k}(\gamma)$ is a closed curve for
 any $0\le k\le n-1$ and any $x\in f^{-k}(\bar x)$.
 Moreover, for any $x\in f^{-n}(\bar x)$, the
 closed curve $L_{x,n}(\gamma)$ is homotopically trivial, since
 it lies in $f^{-n}(\Omega)$.
 Thus every $L_{x,k}(\gamma), x\in f^{-k}(\bar x),k>0$ is a closed curve.
 Consequently, $\alpha_\infty(\pi_1(\Omega-P,\bar x))\to
 \mathfrak S(f^{-n}(\bar x))$ is injective, and so
 $|\alpha_\infty(\pi_1(\Omega-P,\bar x))|\leq|\mathfrak S(f^{-n}(\bar x))|$.

 \ref{c0-5} $\Rightarrow$  \ref{cyc}.
Suppose that there exists an invariant curve $\gamma$.
 Let $\gamma'$ and $k$ be as in Definition \ref{defiinvc}.
 Then $\deg(f^k:\gamma'\to\gamma)>1$.
 Thus for any $m>0$, there exist $n>0$ and $x\in f^{-n}(\bar x)$
 such that $L_{n,x}(\gamma^m)$ is not a closed curve.
 Therefore for the element $g\in\pi_1(K-P,\bar x)$ corresponding
 to $\gamma$, $\alpha_\infty(g)$ is not of finite order.
\end{proof}

\begin{remark}
 The number $2d-4$ in \ref{wab} of Theorem \ref{th1} is
 the best possible.
 Namely, for $d\ge3$ there exists a rational map of degree $d$
 with solitary fixed point $p$ such that $A(p)$ contains
 all critical points, and such that $A_0(p)$ contains exactly $2d-5$
 critical points  counted with multiplicity.
\end{remark}

\begin{remark}

      In Definition \ref{defiinvc}, the condition $\gamma\subset\Omega-P$
      is important.
      The following examples explain the difference between $\Omega-P$
and  $\Omega-\overline P$, $\hat{\mathbb C}-\overline P$, or $\hat{\mathbb C}-P$.
\begin{enumerate}
 \item[(a)]
      There exists a geometrically finite rational map
      that does not satisfy the condition of Theorem \ref{th1}
      and has no homotopically invariant curve in
      $\Omega-\overline P\subset \hat{\mathbb C}-\overline P$.

 \item[(b)]
	 There exists a hyperbolic rational map
      that  satisfies the condition of Theorem \ref{th1}
      and has a homotopically invariant curve in
      $\hat{\mathbb C}-P$.
\end{enumerate}

 Set $f_\lambda(z)=\lambda z/(z^2+1)$ for $\lambda\in\mathbb R$.
      We have    $\mathtt{Cr}=\{\pm1\}$ and $P\subset \mathbb R-\{0\}$.
      It is easy to see that $0$ is a fixed point with multiplier $\lambda$.
      The curve $\gamma=\{ia:a\in\mathbb R\}\cup\{\infty\}$ is a completely invariant set for $f_\lambda$.
      If $\lambda=\pm1$, then $f_\lambda$ does not satisfy
      the condition of Theorem   \ref{th1}, and $0\in \Omega-P$ but
      $0\notin \Omega-\overline P$
      (the case (a)).
    If $-1<\lambda<1$, then $f_\lambda$ satisfies the condition of Theorem
\ref{th1}, and $0\notin \Omega-P$ but $0\in \hat{\mathbb C}-P$  (the case (b)).

\end{remark}

\section{Strongly Cantor rational maps}
In this section, we give a sufficient condition for $f$ to be
s-Cantor.

\begin{theorem}\label{th:8}
 Let $f$ be a Cantor rational map with a (super)attracting fixed point $\infty$.
 Suppose that the critical orbits $\mathtt{Orb}(f(c)),c\in\mathtt{Cr}$ are
 distjoit (i.e. $\mathtt{Orb}(f(c))\cap \mathtt{Orb}(f(c'))=\emptyset$ for distinct $c,c'\in\mathtt{Cr}$), and that  $\infty$ is outside other critical orbits
  (i.e. $\infty\notin \mathtt{Orb}(f(c))$ for $c\in\mathtt{Cr}_0:=\mathtt{Cr}-\{\infty\}$).
 Then $f$ is s-Cantor.
\end{theorem}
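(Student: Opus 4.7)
Setting $U=\hat{\mathbb C}\setminus\overline D$, the s-Cantor property is equivalent to producing an open topological disc $U\subset\hat{\mathbb C}$ which contains $\infty$ and every critical value and satisfies $f(U)\subset U$. By Theorem \ref{th1} (condition \ref{ab}), the hypotheses imply that the immediate basin $F:=A_0(\infty)$ contains every critical point, hence every critical value, so the whole construction takes place inside the single Fatou component $F$. The two additional hypotheses---disjointness of critical orbits, and that $\infty$ lies on no non-fixed critical orbit---will be used to reduce the construction to a finite tree-like problem inside $F$.

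First I would choose, via the Koenigs or B\"ottcher coordinate at $\infty$, an open disc $U_0\subset F$ with $\infty\in U_0$, smooth Jordan boundary, $\overline{f(U_0)}\subset U_0$, and $\overline{U_0}$ disjoint from every critical value other than $\infty$. Because each critical orbit $\mathtt{Orb}(f(c))$ for $c\in\mathtt{Cr}_0$ converges to $\infty$, there is a minimal $N_v>0$ with $f^{N_v}(v)\in U_0$ for each $v=f(c)$, and the finite set $P_*:=P\setminus U_0$ is the disjoint union of orbit segments $v,f(v),\ldots,f^{N_v-1}(v)$. The two hypotheses imply that these segments are pairwise disjoint and that no $p\in P_*$ is a critical point of $f$: if $f^j(c_i)=c_l$ with $j\ge 1$ and $l\ne i$ then $f(c_l)\in\mathtt{Orb}(f(c_i))\cap\mathtt{Orb}(f(c_l))$ contradicts disjointness, while $l=i$ would make $c_i$ periodic, ruled out by convergence of its orbit to $\infty$.

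Next I would graft a tree of arcs onto $U_0$. Construct a family $\{\alpha_p : p\in P_*\}\subset F$ of arcs with $\alpha_p$ joining $p$ to $f(p)$, pairwise disjoint off common endpoints, meeting $P_*$ only at endpoints, and $f$-compatible in the sense that $f(\alpha_p)=\alpha_{f(p)}$ when $f(p)\in P_*$ and $f(\alpha_p)\subset U_0$ when $f(p)\in U_0$. Build the arcs by induction on orbit depth, starting with the shallowest chain points (where $f(p)\in U_0$, so $\alpha_p$ may be chosen freely in $F\setminus(P_*\setminus\{p\})$) and proceeding outward to the critical values by lifting $\alpha_{f(p)}$ through the unique local branch of $f^{-1}$ at $f(p)$ sending $f(p)\mapsto p$---a branch that exists because $p$ is not a critical point. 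The compact tree-on-disc $\Gamma:=\overline{U_0}\cup\bigcup_{p\in P_*}\alpha_p$ is then simply connected, so a sufficiently narrow open tubular neighborhood $U$ of $\Gamma$ in $\hat{\mathbb C}$ is an open topological disc containing $\infty$ and every critical value. The invariance $f(U)\subset U$ follows from $\overline{f(U_0)}\subset U_0$ together with $f(\alpha_p)\subset\alpha_{f(p)}\cup U_0\subset\Gamma$, after shrinking the tube width below the modulus of continuity of $f$ on $\Gamma$.

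The principal obstacle is the inductive lifting step: one must arrange that the lift of $\alpha_{f(p)}$ ends at $f(p)$ rather than at some other preimage of $f^2(p)$, and that the resulting arcs remain pairwise disjoint and stay in $F$. The disjoint-critical-orbit hypothesis is precisely what keeps the lift branches away from critical values and from each other, ensuring in particular that $p$ and $f(p)$ lie on the same sheet of $f^{-1}$ above a simply connected neighborhood of $\alpha_{f(p)}$; if necessary one enlarges $U_0$ to $f^{-k}(U_0)$ for suitable $k$ before performing the thickening. The assumption $\infty\notin\mathtt{Orb}(f(c))$ for $c\in\mathtt{Cr}_0$ keeps each $N_v$ finite, so the entire procedure is a finite combinatorial construction inside $F$.
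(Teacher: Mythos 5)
Your overall architecture---build a small forward-invariant disc $U_0$ near $\infty$, graft onto it a tree of arcs connecting the finitely many postcritical points $P_*=P\setminus U_0$ along the orbit segments, and thicken the tree to a forward-invariant disc containing all critical values---is essentially the argument the paper uses for Theorem~\ref{th:pol}, the \emph{polynomial} case. There it works cleanly because $f^{-1}(\infty)=\{\infty\}$, so $f^{-n}(U_0)$ is connected for every $n$ and the lifted arcs have nowhere ``wrong'' to land. Theorem~\ref{th:8}, however, is precisely about the general rational case, where $f^{-1}(U_0)$ has $\deg f^{-1}(\infty)$ components and the orbit chains can thread in and out of the nested annuli in a combinatorially nontrivial way.

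The gap is exactly where you flag it, and it is not a small one. When $\alpha_{f(p)}$ has already been fixed (by a lift at the previous stage of the induction), its lift at $p$ is \emph{determined}: it is the branch of $f^{-1}$ along $\alpha_{f(p)}$ sending $f(p)\mapsto p$, continued to the other endpoint $f^2(p)$. Nothing in the hypotheses forces that continuation to send $f^2(p)$ to $f(p)$ rather than to one of the other $d-1$ preimages of $f^2(p)$. Disjointness of the critical orbits controls only the \emph{set-theoretic} configuration of $P_*$, not the monodromy of the inverse branches above a disc with $d$ or more marked points; ``$p$ and $f(p)$ lie on the same sheet of $f^{-1}$ above a simply connected neighborhood of $\alpha_{f(p)}$'' is an assertion about that monodromy, and it is exactly the thing that needs proof. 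You have one free choice (the innermost arc $\alpha_{f^{N_v-1}(v)}$); after that every lift is forced, so there are $N_v-2$ compatibility conditions per orbit plus cross-orbit disjointness, and a single initial choice cannot be expected to satisfy all of them. Replacing $U_0$ by $f^{-k}(U_0)$ does not help: for a non-polynomial $f$ that set is disconnected, and taking the component containing $\infty$ just re-poses the same lifting problem at a deeper level.

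The paper's proof spends almost all of its effort exactly on this obstacle. It works with an ``appropriate domain'' $W$ and the collection $\mathcal{B}$ of all preimage components of the first annulus $A_1$, introduces the linear order $\prec$ and the $+1$ operation (Claims 1 and 2), and establishes Lemma~\ref{lem:10}, whose proof is a chain argument involving half Dehn twists that rearranges the arc system $\mathcal{L}$ until the lifted arc at a given critical point lands on the prescribed boundary component. That lemma, and Claim 3's analysis of V-type versus $\Lambda$-type deformations and the quantities $\kappa(W),\lambda(W)$, are precisely the content that would be needed to make your inductive lifting step rigorous. As written, your proof asserts the conclusion of Lemma~\ref{lem:10} without proving it.
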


\begin{proof}
 For a connected open set $W$ containing $\infty$ and an integer $k\ge0$,
  let us denote
 by $W(k)$ the connected component of $f^{-k}(W)$ containing $\infty$.
Note that $W(i)(j)=W(i+j)$ for $i,j\ge0$.

 We say a connected open set $W$ containing $\infty$
 is {\em appropriate} if $\partial W$
 is a simple closed curve,
 $\overline W\subset f^{-1}(W)$ (i.e. $\overline{f(W)}\subset W$), and
  for any $c\in\mathtt{Cr}_0$,
  $(W(1)-\overline{W(0)})\cap \mathtt{Orb}(f(c))\ne\emptyset$.
  We define $\kappa=\kappa(W)$ to be the maximal integer such that
 for any $c\in\mathtt{Cr}_0$,
 $(W(\kappa)-\overline{W(\kappa-1)})\cap \mathtt{Orb}(f(c))\ne\emptyset$.
 We define $\lambda=\lambda(W)$ to be the maximal integer such that
 $W(\lambda)\cap\mathtt{Cr}_0=\emptyset$.
 Clearly, $\kappa\leq\lambda$.
If $W$ is appropriate, then $W(i)$ for $1\le i\le\lambda$ is a topological disc.

 In the sequel, we will construct an appropriate domain $W$ such that
 $\kappa=\lambda$ and
 $\mathtt{Cr}_0\subset W(\kappa+1)-\overline{W(\kappa)}$.
 To this end,  we take an initial appropriate domain $W_0$ near $\infty$
 and  deform it in several steps.
 If $\infty$ is a superattracting fixed point of degree $\delta$,
 then take  a B\"ottcher coordinate
 $\varphi:U\to\mathbb D=\{|z|<1\}$ for  the superattracting fixed point $\infty$.
Here $U$ is a connected and simply connected neighborhood of $\infty$.
 If $0<r<1$, for any $c\in\mathtt{Cr}_0$,
 $\varphi^{-1}(\{0<|z|\le r\})$ contains $f^m(c)$ for some $m>0$,
 since we have assumed that the orbit of $c$ does not land at $\infty$.
 Let $r$ satisfy $\varphi^{-1}(\{|z|= r\})\cap P=\emptyset$.
 For a large enough integer $N$, we define
 $W_0=\varphi^{-1}(\{|z|<r^{\delta^N}\})$.
 Then $W_0$ is appropriate with $\kappa(W_0)\ge N$.
 If $\infty$ is attracting,
 then take a linealizing coordinate $\varphi:U\to\mathbb C$
 for $\infty$,
 and define $W_0$ similarly such that $\kappa(W_0)=N$ is large enough.
The integer $N$ will be determined later.

 Let $W$ be an  appropriate domain.
 We use the notation $A_{i}=A_W(i)=W(i)-\overline{W(i-1)}$
 for $i=1,\dots,\lambda$.
 Note that $A_i$ is an annulus and is the connected component
 of $f^{-1}(A_{i-1})$ such that  $A_{i-1}$ and $A_{i}$ share a
 boundary component.
 The restriction $f:A_{i}\to A_{i-1}$ is a covering of degree $\delta$.
 We set $A_{\lambda+1}$ to be the connected component of $f^{-1}(A_\lambda)$
 such that  $A_{\lambda}$ and $A_{\lambda+1}$ share a boundary component.
 Note that $A_{\lambda+1}\subset W({\lambda+1})-\overline{W({\lambda})}$
 is not an annulus.
 Let us denote $C_i=C_W(i)=\partial W(i)$  for $i=0,1,\dots,\lambda$.
 It equals $\overline{A_{i+1}}\cap\overline{A_i}$ for $i=1,2,\dots,\lambda$.

 Let $\mathcal B_m$ be the set of connected components of $f^{-m}(A_1)$,
 and
 \begin{equation}\label{eq:0}
  \mathcal B=\mathcal B_W=\bigcup_{m=0}^\infty \mathcal B_m.
   \end{equation}
% Let
% \begin{equation}\label{eq:0}
%\mathcal B=\mathcal B_W=\mathcal B'\cup \{A_i\}_{1\le i\le \kappa-1}\cup\{W\}
%  \cup\{\text{c.c. of }\bigcup_{m=1}^\infty f^{-m}(W(\kappa))-W(\kappa+1)\}
%   \end{equation}
% (c.c.= connected component).
Note that $A_i\in \mathcal B_{i-1}$ for $i=1,2,\dots,\lambda+1$.
 We say distinct $X,X'\in\mathcal B$ are {\em adjacent} if they share
a boundary component.
 If $X,X'\in\mathcal B-\{A_1\}$ are adjacent,
 then $f(X),f(X')\in\mathcal B$ are also adjacent.
 Define an order $\prec$ on $\mathcal B$ by
  $X\prec X'$ if $X'$ separates $X$ and $W$.
  We may use the notation $X\prec_W X'$ by specifying the domain.
 It is easily seen that for any $X\in\mathcal B-\{A_1\}$, there uniquely
 exists $X'\in\mathcal B$ such that $X,X'$ are adjacent and $X\prec X'$.
 We use the notation  $X+1$ for this $X'$,
 and define inductively $X+m=(X+m-1)+1$ for
 $m=2,3,\dots$.
 If $X'=X+m$, we write $$L(X,X'):=m.$$
If $X,X'$ are adjacent, then $X=X'+1$ or $X+1=X'$.
For $X\in\mathcal B-\{A_1\}$, the set $\{Y\in\mathcal B:X\prec Y\}$
is linearly ordered.

\medskip

Claim 1.
 If $X\prec X'$, then there exists $m>0$
 such that $X+m=X'$.

{\em Proof of Claim 1.}
 There is a path $\gamma$ in the Fatou set $F$ which connect
 $X$ and $X'$.
 By the compactness of $\gamma$, we have
 $\gamma\subset \bigcup_{0\le k \le k_0} f^{-k}(W)$ for some $k_0$.
 We have
$F-\bigcup_{X\in \mathcal B}\overline X=W\cup\bigcup_{m=0}^\infty
 f^{-m}(f^{-1}(W)-W(1))$.
 The connected components of $f^{-1}(W)$ other than $W(1)$ are topological
 disc with no point of $P$.
 Thus all the connected components of $F-\bigcup_{X\in \mathcal B}\overline X$
  are topological discs.
 Since only finite of them intersect $\gamma$, we can retake $\gamma$ such that
 $\gamma\subset \bigcup_{0\le k \le k_0-1} f^{-k}(A_1)$.
 Hence $\#\{X\in\mathcal B:X\cap\gamma\ne\emptyset\}<\infty$, and
 the claim is true.

\medskip

 Claim 2.
 Let $k>0$ be an integer.
 Suppose $f^k(X')= X$.
 Then
 \begin{itemize}
   \item $X\ne X'$
   \item $X\not\prec X'$
   \item $X\not\prec X'+1$
 \end{itemize}
 %If $X\prec X'$, then $f^k(X')\ne X$ for any integer $k>0$.

 % If $X_1X_2\cdots X_m$ is an interval,
 % then there is no $X_i$ ($2\le i\le m$) such that
 % $f^k(X_i)=X_1$ for some $k\ge1$.

 {\em Proof of Claim 2.}
The first assertion is easy.

Assume $X\prec X'$.
 Let $X_0=X$ and $X_i=X_0+i$ for $i=1,2,\dots,s,\dots,m$, where $X_s=X'$ and $X_m=A_1$.
 % We can assume $X_m=A_1$ without loss of generality.
 %Let $k>0$ be an integer such that $f^k(X_s)\in\mathcal B$.
 We have  $\{f^k(X_j):0\le j\le m-1\}\cap \mathcal B\supset
 \{f^k(X_s),f^k(X_s)+1,f^k(X_s)+2,\dots,A_1\}$, since $f^k(X_j)$ and
 $f^k(X_{j+1})$ are adjacent.
 Hence $L(f^k(X_s),A_1)\le m$.
 This is a contradiction.

 Assume $X\prec X'+1$.
 Then $X+1=X'+1$ or $X+1\prec X'+1$.
 Since $f^k(X')=X$, $f^k(X'+1)$ is adjacent to $X$.
 If $X+1=f^k(X'+1)$, then $f^k(X'+1)=X'+1$ or $f^k(X'+1)\prec X'+1$,
 which contradicts the first or second assertion.
 If $f^k(X'+1)+1=X$, then $f^k(X'+1)\prec f^k(X'+1)+1=X\prec X'+1$,
 which contradicts to the second assertion.
This completes the proof of Claim 2.

For $z\in\bigcup_{X\in\mathcal B} X$,
 let $X(z)=X_W(z)\in\mathcal B$ be the connected component containing $z$.

%Let $X_1,X_2,\dots, X_m\in\mathcal B$.
% We say $X_1X_2\cdots X_m$ is an {\em interval} if
% $X_i+1=X_{i+1}$ for each $i$.
% Then we may write $X_1X_2\cdots X_m=[X_1,X_m]$.

% An interval $X_1X_2\cdots X_m$ is {\em monotone} if
% $f^k(X_{i})+1= f^k(X_{i+1})$ for $i=1,2,\dots,m-1$ and
% $k=1,2,\dots,s_i$, where $s_i$ is such that $f^{s_i}(X_{i+1})=A_1$.

% Generally, we may write $X_1X_2\cdots X_m
% =[X_1,X_{k_1},X_{k_2},\dots,X_{k_j},X_m]$ if
% $k_0=1<k_1<\cdots<k_j<k_{j+1}=m$ and each
% $X_{k_i}X_{k_i+1}\cdots X_{k_{i+1}}$ is an interval.
% We say $X_k$ ($1<k<m$) is a {\em turning point} in $X_1X_2\cdots X_m$ if
% $X_{k-1}X_kX_{k+1}$ is not an interval.
% We say $X_1X_2\cdots X_m$ is {\em unimodal} if it contains exactly
% one turning point.

%  For $\mathcal X=X_1X_2\cdots X_m\in\mathcal B^*$, we define
% $$
% f(\mathcal X)=
% \left\{\begin{array}{ll}
%  f(X_1)f(X_2)\cdots f(X_m)& X_i\ne A_1 \text{ for any }i\\
% f(X_2)\cdots f(X_m)& X_1= A_1, X_i\ne A_1 \text{ for other }i\\
%	 f(X_1)f(X_2)\cdots f(X_{m-1})& X_m=A_1 X_i\ne A_1 \text{ for other }i\\
%	 \text{not defined}& \text{otherwise}
% \end{array}\right.$$
% It is clear that $f(\mathcal X)\in\mathcal B^*$ if it is defined.

 Let $c$ be a critical point other than $\infty$.
 Suppose $$c\notin A_{\lambda+1},$$
 equivalently, $X(c)\prec A_{\lambda+1}$.
 Define an integer
 $\mu=\mu(c)$ satisfying $f^\mu(c)\in A_1$.
 Let $\gamma:[0,1]\to \overline{A_1}$ be a simple path such that
 $\gamma(0)=f^\mu(c)$, $\gamma(1)\in \partial A_1$,
 and $\gamma(t)\in A_1-P$ for $0<t<1$.
 We say $\gamma$ is of {\em V-type} for $(c,W)$ if $\gamma(1)\in C_0=\partial W$,
 and is of {\em $\Lambda$-type} for $(c,W)$ if  $\gamma(1)\in C_1=\partial W(1)$.

 The following lemma will be proved later.

\begin{lemma}\label{lem:10}
 Suppose that $X(c)$ and $X\in\mathcal B$ are adjacent.
 Then there exists a simple path $\gamma$ of V-type or $\Lambda$-type
 for $(c,W)$ such that the connected component $\gamma'$ of $f^{-\mu}(\gamma)$
 containing $c$ has an endpoint in $E_0=\overline{X(c)}\cap \overline{X}$.

\end{lemma}

By this lemma, we have  a simple path $\gamma$ of V-type or $\Lambda$-type
 such that the connected component $\gamma'$ of $f^{-\mu}(\gamma)$
 containing $c$ has an endpoint in $\overline{X(c)}\cap \overline{X(c)+1}$.
Let  $\gamma_j:[0,1]\to f^{-j}(\overline{A_1})$ for $1\le j\le \mu-1$
 be the lift of $\gamma$ by $f^{j}$
 with  \begin{equation}\label{eq:0.3}\gamma_j(0)=f^{\mu-j}(c).
       \end{equation}
Note that $\gamma_j(0)=f^{\mu-j}(c)\in A_{j+1}$ for $1\le j\le \kappa-1$.

Set $$\sigma=\left\{\begin{array}{ll}
	      0& 	     \text{ (V-type) }\\
		     1&\text{ ($\Lambda$-type).}
  \end{array}\right.$$

 We take a Jordan domain $U_\gamma\subset A_1$ including $\gamma\cap A_1$
 which is so close to $\gamma$ that $U_\gamma$ contains
 no point of $P$ other than $\gamma(0)=f^{\mu}(c)$, and that
 $\partial U_\gamma\cap \partial A_1$ is an arc in  $C_\sigma$.
Define $W_\gamma:=\mathrm{int}(\overline{W\cup U_\gamma})$ (V-type) or
$W_\gamma:=W(1)-\overline{U_\gamma}$ ($\Lambda$-type).

\medskip

  Claim 3.
Suppose  $\kappa(W)\ge2$.
 The domain $W_\gamma$ is an appropriate domain with
 $\kappa(W)-1\le\kappa(W_\gamma)\le\kappa(W)$ and $\lambda(W_\gamma)=\lambda(W)-\sigma$.
 Let
 $$\mathcal B^0=\mathcal B_{c,W}^0=
 \{X\in \mathcal B_W: X_W(c)+1 \prec_W X\preceq_W A_W(\sigma+1)\}, $$
  and
 $$\mathcal B^1=\mathcal B_{c,W_\gamma}^1
 =\{X\in \mathcal B_{W_\gamma}: X_{W_\gamma}(c) \prec_{W_\gamma}X \}.$$
 Then we have a one-to-one correspondence
 $\Psi=\Psi_\gamma:\mathcal B^0\to \mathcal B^1$ such that
 $X$ and  $\Psi(X)$ are homotopic relative to $P-\mathtt{Orb}(f(c))$,
% $X\cap\mathtt{Orb}(c')=\Psi(X)\cap\mathtt{Orb}(c')$ for
 %every $c'\in\mathtt{Cr}-\{c,\infty\}$,
 and $\Psi$ preserves the order $\prec$ and  the $+1$ operation.
 %The mapping is given by $\Psi(X)=h(X)$.
In particular, $\Psi(A_W(\lambda(W)))
=A_{W_\gamma}(\lambda(W)-\sigma)=A_{W_\gamma}(\lambda(W_\gamma))\in \mathcal B^1$.
%Moreover  either $i\le \lambda(W_\gamma)-1$ or $c\in A_{W_\gamma}(i+1)$.

 {\em Proof of Claim 3.}
We have $W\subset W_\gamma\subset\overline{W_\gamma}\subset W(1)$
 (V-type) or
 $\overline W\subset W_\gamma\subset W(1)$
 ($\Lambda$-type).
 Hence $\overline{f(W_\gamma)}\subset W_\gamma$.
 From $W_\gamma\cap P\subset W(1)\cap P$ and $\kappa\ge2$,
we can see that $W_\gamma$ is appropriate.

By Claim 2, if $z\in f^{-k}(c)$ for some $k>0$, then $X_W(z),X_W(z)+1\notin \mathcal B^0$.
Therefore if $X\in\mathcal B^0$, then any adjacent $X'\in\mathcal B$ to $X$
 contains no preimage of $c$.
The same statement is true for $\mathcal B^1$.

Take a point $x\in A_W(\sigma+1)\cap A_{W_\gamma}(1)$.
We define $\Psi(A_W(\sigma+1))=A_{W_\gamma}(1)$, and inductively
$\Psi(X)$ is the connected component of $f^{-1}(\Psi(f(X)))$
 such that $X$ and  $\Psi(X)$ are homotopic relative to
 $\bigcup_{k=0}^{\infty} f^{-k}(x)$.
This procedure is valid for $X\in\mathcal B^0$ because
every connected component $U$ of $\bigcup_{k=0}^{\infty} f^{-k}(U_\gamma)$
adjacent to $X$
is a topological disc such that $\partial U\cap\partial X$ is connected
and $U$ is not adjacent to $X'\in\mathcal B$ other  than $X$.
Here we say $U$ and $X$ are adjacent if $U\cap X=\emptyset$ and
$\partial U\cap\partial X\ne\emptyset$.
The symmetric difference between $X$ and $\Psi(X)$ is a disjoint union of such
topological discs.

Let $U_c$ be the connected component of $f^{-\mu}(U_\gamma)$
containing $c$.
Then $U_c$ is included in $X_W(c)$ and adjacent to $X_W(c)+1$
by the definition of $\gamma$.
Thus $$X_{W_\gamma}(c)= (X_W(c)+1)\cup U_c\cup \tilde X\cup U_+-U_-,$$
where $\tilde X$ is the union of $X\in\mathcal B_W$ adjacent to $U_c$,
 and each of $U_-,U_+$ is a disjoint union of  topological discs.
(Other $X$'s in $\mathcal B_W$ do not contribute to $X_{W_\gamma}(c)$.
Indeed,  if $X\in\mathcal B_W$ is adjacent to $U_c$, then
any $X'\in\mathcal B_W$ adjacent to $X$ does not fulfill $f^k(X')=X_W(c)$
for  $k>0$ by Claim 3, because of the fact that $f^k(X)=f^k(X_W(c)+1)$
and that $f^k(X')=X_W(c)\Rightarrow f^k(X)\text{ is adjacent to }X_W(c)$.)
 Therefore
 we have $\Psi(X_W(c)+2)=X_{W_\gamma}(c)+1$.

% By (\ref{eq:0.5}) and (\ref{eq:1}),  we have $f\circ h(X)=h\circ f(X)$ for $X\in\mathcal B^0-\{A_W(\sigma+1)\}$.
% Since $f(X)$ and $h\circ f(X)$ are homotopic outside the critical values,
%it is easy to see that $h(X)$ is a connected component of $f^{-1}(h\circ f(X))$.
 Hence $\Psi:\mathcal B^0\to\mathcal B^1$ is well-defined and  $X\cap\mathtt{Orb}(c')=\Psi(X)\cap\mathtt{Orb}(c')$ for
 every $X\in\mathcal B^0$ and  $c'\in\mathtt{Cr}-\{c,\infty\}$.
 In particular,
 %we have $f\circ h(A_W({i+1}))=h\circ f(A_W({i+1})) =h(A_W({i}))$
 %for $\sigma+1\le i\le \lambda(W)-1$
 %by (\ref{eq:3}) and (\ref{eq:1}).
 %This means
 $A_{W_\gamma}(i)=\Psi(A_W(i+\sigma))$
 for $1\le i\le \lambda(W)-\sigma$.
Clearly,
$$A_{W_\gamma}(i)\cap(P\cup\mathtt{Cr}-\mathtt{Orb}(c))
=A_W(i+\sigma)\cap(P\cup\mathtt{Cr}-\mathtt{Orb}(c))$$
and
$$A_{W_\gamma}(i)\cap\mathtt{Orb}(c)=A_W(i+1-\sigma)\cap\mathtt{Orb}(c)$$
for $1\leq i\leq \lambda(W)-\sigma$.
Thus   $\kappa(W_\gamma)\geq\kappa(W)-1$ and
 $\lambda(W_\gamma)\geq\lambda(W)-\sigma$.
 Since $A_{W_\gamma}(\lambda(W)+1-\sigma)\cap\mathtt{Cr}\supset
A_W(\lambda(W)+1)\cap\mathtt{Cr}$, we have
 $\lambda(W_\gamma)=\lambda(W)-\sigma$.

Case 1. Suppose $\gamma$ is V-type.
If $A_W(\kappa(W)+1)\cap\mathtt{Orb}(c)\ne\emptyset$,
then $\kappa(W_\gamma)=\kappa(W)$; otherwise $\kappa(W_\gamma)=\kappa(W)-1$.

Case 2-i. Suppose $\kappa(W)<\lambda(W)$ and $\gamma$ is $\Lambda$-type.
If $A_W(\kappa(W)+1)\cap\mathtt{Orb}(c')\ne\emptyset$ for every ${c'\in\mathtt{Cr}-\{c,\infty\}}$,
then $\kappa(W_\gamma)=\kappa(W)$; otherwise $\kappa(W_\gamma)=\kappa(W)-1$.

Case 2-ii. Suppose $\kappa(W)=\lambda(W)$ and $\gamma$ is $\Lambda$-type.
Then $\kappa(W_\gamma)=\kappa(W)-1$.

It is clear that $\Psi$ preserves the order $\prec$ and the $+1$ operation.
This completes the proof of Claim 3.
We call this operation a $\gamma$-deformation of an appropriate domain.

\medskip

 In the proof above, we have proved $X_{W_\gamma}(c)+1=\Psi(X_W(c)+2)$.
 This means that
 $L(X_{W_\gamma}(c),A_{W_\gamma}(\lambda(W_\gamma)))
 =L(X_W(c),A_W(\lambda(W)))-1$.

 We show that if $\kappa(W)\ge {L}(X_W(c),A_W(\lambda(W)))
 =:n$, then
by repeating $\gamma$-deformation
 $n-1$ times, we get an appropriate domain $W'$ such that
 $c\in A_{W'}(\lambda(W')+1)$ and
 $A_W(\lambda(W)+1)\cap (P\cup\mathtt{Cr}-\mathtt{Orb}(c))
 = A_{W'}(\lambda(W')+1)\cap (P\cup\mathtt{Cr}-\mathtt{Orb}(c))$.
Indeed, we take a sequence of paths $\gamma_1,\gamma_2,\dots,\gamma_{n-1}$
 which satisfies the following:
 $\gamma_k$ is of V-type or $\Lambda$-type for $(c,W^{k-1})$
  obtained by Lemma \ref{lem:10},  where
 inductively we write $W^0=W$ and $W^k=W^{k-1}_{\gamma_k}$
 ($k=1,2,\dots,n-1$).
 Then $X_{W^{n-1}}(c)+1=\Psi_{\gamma_{n-1}}\circ\cdots\circ
 \Psi_{\gamma_1}(A_W(\lambda(W)))=:\tilde X$.
We have $\tilde X=A_{W^{n-1}}(\lambda(W^{n-1}))$, since
$\Psi_{\gamma_k}\circ\cdots\circ
 \Psi_{\gamma_1}(A_W(\lambda(W)))\in\mathcal B_{W^k}^0$
 for $k=1,2,\dots,n-2$.

Taking $N$ large enough, we can apply this procedure to all the critical points
other than $\infty$, and
 we have an appropriate domain $\tilde W$ such that $\tilde W(\kappa)=\tilde W(\lambda)$
contains all the critical values.
\end{proof}

 {\em Proof of Lemma \ref{lem:10}.}
 We assume $f^\mu(X)=f(A_1)$ without loss of generality.
 Then $E_0$ is a connected component of $f^{-\mu}(C_0)$.
 Let $P_0\subset A_1$ be the set of critical values of $f^\mu:X(c)\to A_1$.
 Let $m=|P_0|$ and take $m$ points $a_0,a_1,\dots,a_{m-1}\in C_0$
 arranged counterclockwise.
 Take $\mathcal L=\{L_p\}_{p\in P_0}$, a collection of
 disjoint simple paths $L_p$'s in $A_1$ joining $p\in P_0$ and some $a_{j(p)}$
 such that $j(p)\ne j(p')$
 if $p\ne p'$, and such that $L_p$ does not intersect  $P$ except at the endpoint.
 We consider that  $j:P_0\to\mathbb Z/(m)$ is a bijection which depends on
 $\mathcal L$.
 We may write the inverse $p=j^{-1}:\mathbb Z/(m)\to P_0$.
  Write $L=\bigcup_{p\in P_0}L_p$.
 Since each connected component of $f^{-\mu}(A_1-L)$
 in $X(c)$ is an annulus,
 $f^{-\mu}(C_0\cup L)\cap \overline{X(c)}$ is connected.
%Note that the connected components of $f^{-\mu}(A_1-L)$ and the connected components
%of $f^{-\mu}(C_1)$ have a one-to-one correspondence.
%For a connected component $H\subset f^{-\mu}(C_1)$, let denote by
%$Y(H)=Y_{\mathcal L}(H)$
%the connected component  of $f^{-\mu}(A_1-L)$ such that $H\subset \partial Y(H)$.

 %Let $c'\in X(c)$ be a critical point of $f^\mu$ and $E$ a connected
 %component of $f^{-\mu}(C_0)$.
 %We say $c'$ are tied to $E$ for $\mathcal L$ if  some connceted
 %component of $f^{-\mu}(L_{f^\mu(c')})$ contains $c'$ and intersects $E$.
%If $c$ is tied to $E_0$, then $L_{f^\mu(c)}$ is the required path.

 For $z\in \overline{X(c)}$,
 let us denote by $I(z)$ the  connected component of
 $f^{-\mu}(L)$ containing $z$ if it exists, and similarly by $\overset{\circ}I(z)$
 the  connected component of $f^{-\mu}(L-P_0)$ containing $z$.
  We say $\mathcal C=(\{E_i\}_{i=0}^k,\{c_i\}_{i=0}^k)$ is a chain between $E_0$ and $c$ for
 $\mathcal L$ if
  $E_0,E_1,\dots,E_k$ are distinct connected  components of
 $f^{-\mu}(C_0)$ in $\overline{X(c)}$ and   $c_0,c_1,\dots,c_{k}$
 are distinct critical points of $f^\mu|_{X(c)}$ satisfying the following:
 $I_i=I(c_i)$ joins  $E_i$ and $E_{i+1}$ ($i=0,1,\dots,k-1$),
 $I_k=I(c_k)$ has an endpoint in $E_k$,
and $c_k=c$.
The existence of such a chain is guaranteed by the fact mentioned at the end of
 the previous paragraph.
 We say $k$ is the length of the chain, and denote by
 $\mathrm{Len}(\mathcal C)$.
 If there exists a chain with $\mathrm{Len}(\mathcal C)=0$,
 then $L_{f^\mu(c)}$ is a required path $\gamma$.

If $m=1$, then $c$ is the only critical point of $f^{\mu}:{X(c)}\to A_1$.
In this case,  any chain between $E_0$ and $c$ has  length zero.
Therefore we can assume  $m>1$.

 Take a chain $\mathcal C=(\{E_i\},\{c_i\})$ between $E_0$ and $c$
 for $\mathcal L$  minimizing $k=\mathrm{Len}(\mathcal C)$.
 Moreover, if there are several candidates for $E_k$, then choose one
 minimizing $\mathrm{D}(\mathcal C)$ defined below.
 Assume $k>0$.
 We will show that we can deform $\mathcal L$ such that there exists
 a chain of length $0$.
 To this end, we use the deformation by half Dehn twists several times.
 At each step, either the length $\mathrm{Len}(\mathcal C)$ of the chain or
 the ``distance'' $\mathrm{D}(\mathcal C)$
 between $I_{k}$ and $I_{k-1}$ reduces by at least one.

Let $p_0=f^\mu(c)$ and $p_1=p(j(p_0)+1)$.
For simplicity, we set $j(p_0)=0,j(p_1)=1$.
Let us denote by $[a_0,a_{1}]$  the counterclockwise
 arc in $C_0$ from $a_0$ to $a_{1}$.
 Take a simple path $l$ between $p_0$ and $p_1$ in $A_1$
 which does not intersect $L$ except at
 the endpoints such that the closed curve
 $L_{p_0}\cup l\cup L_{p_1}\cup [a_{0},a_{1}]$ is homotopically
 trivial in $\overline{A_1}$ (i.e. it does not enclose $A_2$).
Note that $l$ is unique up to homotopy.
 Take a Jordan domain $U_l\subset (A_1-L)\cup (L_{p_0}\cup L_{p_1})$
 which includes $l$ and
 does not include any  point of $P$ other than $p_0,p_1$.

 We define a half Dehn twist on $U_l$ as follows.
 Let $\tau:\mathbb D\to \mathbb D$ be the homeomorphism
 defined by $\tau(z)=-z$ for $|z|\le1/2$ and $\tau(z)=e^{2(1-|z|)\pi i}z$
 for $1/2<|z|\le1$.
 Note that $\tau([1/2,1))$ is a path joining $-1/2$ and $1$,
  and $\tau((-1,-1/2])$ is a path joining $-1$ and $1/2$.
 Choose an orientation-preserving diffeomorphism $h:\overline{U_l}-l\to\overline{\mathbb D}-\{0\}$
  with
$h(L_{p_0}\cap \overline{U_l}-\{p_0\})=(0,1]$, and
$h(L_{p_1}\cap \overline{U_l}-\{p_1\})=[-1,0)$.
Let $T=h^{-1}\circ \tau\circ h:\overline{U_l}-l\to \overline{U_l}-l$.
 Extend $T$  to a self-homeomorphism of $A_1-l$
 by the identity outside $U_l$.

%Let $g:\overline{\mathbb D}-\{0\}\to\overline{\mathbb D}-[-1/4,1/4]$ be a homeomorphsim such that
%$g$ restricted on $\{1/2\le |z|<1\}$ is the identity and $g([-1/2,0))=[-1/2,-1/4),
%g((0,1/2])=(1/4,1/2]$.
%Note that $\alpha(n+1/2,(0,1])\subset L_{p_1}$ for $n\in\mathbb Z$.
% map defined by $\alpha(x,y)=h^{-1}(ye^{2\pi i x})$.
%Then  $\alpha_1=\alpha(\cdot,1):\mathbb R\to \partial U_l$ and
% $\alpha_{2}=\alpha(\cdot,1/2):\mathbb R\to h^{-1}(\{|z|=1/2\})$
% are universal coverings.

 We take a counterclockwise universal covering $\zeta:\mathbb R\to C_0$ such that
 $\zeta(i+n/m)=a_{n}$
 for $i,n\in\mathbb Z$.
 For a connected component $E$ of $f^{-\mu}(C_0)$, and
 $\tilde a_0\in f^{-\mu}(a_0)\cap E,
 \tilde a\in f^{-\mu}(\{a_0,a_1,\dots,a_{m-1}\})\cap E$,
  we define $i=d(\tilde a_0,\tilde a)$ to be the smallest positive integer
  such that $\tilde\zeta_{\tilde a_0}(i/m)=\tilde a$,
  where $\tilde\zeta_{\tilde a_0}:
  \mathbb R\to E$
  is the lift of $\zeta$ by $f^\mu$ with $\tilde\zeta(0)=\tilde a_0$.

  Let
  $$\mathrm{D}(\mathcal C)=\mathrm{D}_{\mathcal L}(\mathcal C)=
  \min\{d(\tilde a_0,\tilde a):\tilde a_0\in E_k\cap I_k,\tilde a\in E_k\cap I_{k-1}\}.$$
  We can consider $\mathrm{D}(\mathcal C)$ to be a counterclockwise
  distance from $I_{k}$ to $I_{k-1}$  along $E_k$.

The following lemma is easy to see.

\begin{lemma}\label{lem:10-0}
  Let $E$ be a connected component of $f^{-\mu}(C_0)$,
let $\tilde a_0\in f^{-\mu}(a_0)\cap E,\tilde a_1\in f^{-\mu}(a_1)\cap E$
such that $d(\tilde a_0,\tilde a_1)=1$.
Let $\alpha:\mathbb R\times(0,1]\to\overline{U_l}$
be the composition of the universal covering
$\mathbb R\times(0,1]\to \overline{\mathbb D}-\{0\}$ and
$h^{-1}:\overline{\mathbb D}-\{0\}\to\overline{U_l}-l$
such that $\alpha(n,(0,1])\subset L_{p_0}$,
$\alpha(n+1/2,(0,1])\subset L_{p_1}$ for $n\in\mathbb Z$.
Let $\tilde\alpha_{\tilde a_0}:\mathbb R\times(0,1]\to S$ be the lift
of $\alpha$ by $f^\mu$ (i.e. $f^\mu\circ\tilde\alpha_{\tilde a_0}=\alpha$)
 such that
$\tilde\alpha_{\tilde a_0}(0,(0,1])\subset \overset{\circ}I(\tilde a_0)$.

%Let $\tilde\alpha:\mathbb R\times(0,1]\to S$ be the lift of $\alpha$ such that
%$f^\mu\circ \tilde\alpha=\alpha$ and
%$\tilde\alpha(0,(0,1])\subset \overset{\circ}I(\tilde a_0)$.
%Set $x_0=\tilde\alpha(0,1/2),y_0=\tilde\alpha(0,1)$.

For $\mathcal{L}'=\{T^n(L_p)\}_{p\in P_0}$ with $n\in\mathbb Z$,
we use the notation $I^n(z)$ defined similarly to $I(z)$.
Then the following statements hold:
 \begin{itemize}
   \item $\tilde\alpha_{\tilde a_0}(1/2,(0,1])\subset \overset{\circ}I(\tilde a_1)$.
 \item
 For $z\in f^{-\mu}(p_0)\cap\overline S$, if $z\in I(\tilde\alpha_{\tilde a_0}(n,1))$,
 then $z\in I^{2n-1}(\tilde a_1)$.

 \item
 For $z\in f^{-\mu}(p_1)\cap\overline S$, if $z\in I(\tilde\alpha_{\tilde a_0}(n+1/2,1))$,
 then $z\in I^{2n+1}(\tilde a_0)$.
\end{itemize}
\end{lemma}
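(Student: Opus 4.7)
The plan is to establish existence of the lift $\tilde\alpha_{\tilde a_0}$ on all of $\mathbb R\times(0,1]$, then prove statement~1 by tracing the image at $r=1$, and finally prove statements~2 and~3 by lifting the half Dehn twist $\tau$ through the universal cover $\mathbb R\times(0,1]\to\overline{\mathbb D}-\{0\}$ and reading off the resulting integer shifts.

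For existence of $\tilde\alpha_{\tilde a_0}$, the map $\alpha$ takes values in $\overline{U_l}-l$, and by the choice of $U_l$ the set $U_l\cap P$ equals $\{p_0,p_1\}\subset l$. Hence $\alpha$ avoids the critical values of $f^\mu|_{X(c)}$, and since $\mathbb R\times(0,1]$ is simply connected, the lift starting at $\tilde a_0$ is defined and unique on all of $\mathbb R\times(0,1]$. For statement~1 I trace the arc $t\mapsto\alpha(t,1)=h^{-1}(e^{2\pi it})$ for $t\in[0,1/2]$. This is the half of $\partial U_l$ on the side not enclosing $A_2$, and is homotopic rel endpoints in $\overline{A_1}-P_0$ to the counterclockwise subarc $[a_0,a_1]\subset C_0$. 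The lift of $[a_0,a_1]$ from $\tilde a_0$ runs along $E$ and ends at the counterclockwise-nearest preimage $\tilde a$ of $a_1$, which by the hypothesis $d(\tilde a_0,\tilde a_1)=1$ is exactly $\tilde a_1$. Hence $\tilde\alpha_{\tilde a_0}(1/2,1)=\tilde a_1$, and since $\alpha$ maps $\{1/2\}\times(0,1]$ into $L_{p_1}-\{p_1\}$, the lifted ray $\tilde\alpha_{\tilde a_0}(1/2,\cdot)$ lies in the connected component of $f^{-\mu}(L_{p_1}-\{p_1\})$ attached to $\tilde a_1$, namely $\overset{\circ}I(\tilde a_1)$.

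For statements~2 and~3, I would lift $\tau$ to a homeomorphism $\tilde\tau:\mathbb R\times(0,1]\to\mathbb R\times(0,1]$ normalized by $\tilde\tau\to\mathrm{id}$ as $r\to1$. The explicit formula $\tau(z)=-z$ for $|z|\le1/2$ forces $\tilde\tau(x,r)=(x+1/2,r)$ in that range, so $\tilde\tau^k(n,r)=(n+k/2,r)$ for $r\le1/2$. In particular the preimage of the $p_0$-endpoint of $\alpha(n,\cdot)$ is shifted by $k/2$ under $\tilde\tau^k$, so for odd $k$ the $p_0$-endpoint becomes a $p_1$-endpoint. Composing with $\tilde\alpha_{\tilde a_0}$ and descending by $f^\mu$, for $z\in f^{-\mu}(p_0)$ lying in $I(\tilde\alpha_{\tilde a_0}(n,1))$ the unique $T^k$-image of $L_{p_1}$ in $\mathcal L'$ whose $p_1$-endpoint is $z$ corresponds to $k=2n-1$, proving statement~2; the symmetric computation with $p_0,p_1$ and $\tilde a_0,\tilde a_1$ interchanged gives $k=2n+1$ for statement~3.

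The main obstacle is the index bookkeeping in the last step: pinning down the specific shifts $2n-1$ and $2n+1$ (rather than $2n$ or $2n+2$) requires combining the counterclockwise orientation of the covering $E\to C_0$, the half-period of the twist $\tau$, and the observation that a $p_0$-endpoint in $\mathcal L$ matches a $p_1$-endpoint in $\mathcal L'$ after one half-twist. Once these orientation conventions are fixed the remaining computation is a direct count in the universal cover.
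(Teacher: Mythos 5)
The paper itself offers no proof of Lemma~\ref{lem:10-0} beyond the remark ``the following lemma is easy to see,'' so there is no paper argument to compare against. Your strategy --- lift the explicit twist $\tau$ to the universal cover $\mathbb R\times(0,1]$, obtaining $\tilde\tau(x,r)=(x+1-r,r)$ for $r>1/2$ and $\tilde\tau(x,r)=(x+1/2,r)$ for $r\le1/2$ (normalized to be the identity at $r=1$), then push forward through $\tilde\alpha_{\tilde a_0}$ and descend by $f^\mu$ --- is a natural and correct way to establish all three statements, and the shift $\tilde\tau^k(n,r)=(n+k/2,r)$ is exactly what produces the asserted odd exponents. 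This is a faithful expansion of what the paper compresses into one sentence.

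Two local imprecisions in the write-up, neither fatal: (i) the arc $t\mapsto\alpha(t,1)$, $t\in[0,1/2]$, has endpoints $h^{-1}(\pm1)\in\partial U_l$, not $a_0,a_1$, so the homotopy rel endpoints should be asserted for the concatenation $L_{p_0}|_{[a_0,h^{-1}(1)]}\cdot\alpha(\cdot,1)|_{[0,1/2]}\cdot L_{p_1}|_{[h^{-1}(-1),a_1]}$ with $[a_0,a_1]$; (ii) correspondingly $\tilde\alpha_{\tilde a_0}(1/2,1)$ is not equal to $\tilde a_1$ (it is a point of $f^{-\mu}(\partial U_l)$), but it is joined to $\tilde a_1$ by the lift of $L_{p_1}\cap(\overline{A_1}-U_l)$, so the displayed conclusion $\tilde\alpha_{\tilde a_0}(1/2,(0,1])\subset\overset{\circ}{I}(\tilde a_1)$ still follows. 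Beyond that you correctly flag the remaining work as pinning down the orientation conventions (which half of $\partial U_l$ maps to the $[a_0,a_1]$ side under the orientation-preserving $h$, and the sign of the counterclockwise covering $\zeta$); these are exactly the conventions the paper leaves implicit, and once fixed your universal-cover count does produce the indices $2n-1$ and $2n+1$, the asymmetry coming from lifting from $x=1/2$ versus $x=0$. In short: correct approach, honest about the one unfinished bookkeeping step, with two small statements that should be tightened as above.
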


\begin{lemma}\label{lem:10-1}
Let $E$ be a connected component of $f^{-\mu}(C_0)$.
Let $\tilde a_0\in f^{-\mu}(a_0)\cap E$ and $\tilde a_1\in f^{-\mu}(a_1)\cap E$  such that $d(\tilde a_0,\tilde a_1)=1$.
Let $z_1\in f^{-\mu}(p_1)\cap I(\tilde a_1)$ and
let $l'$ be the connected component of $f^{-\mu}(l)$ containing $z_1$.
 Then the following hold:
 \begin{enumerate}
   \item If $c\in l'$, then there exist a connected component $E'$ of
   $f^{-\mu}(C_0)$ and $\tilde a_0'\in I(c)\cap E',\tilde a_1'\in I(z_1)\cap E'$
   such that $d(\tilde a_0',\tilde a_1')=1$.
\item If $c\in l'$, then there exists an odd integer $n$ such that $\tilde a_1\in I^n(c)$.
\item
If $c\notin l'$, then $\tilde a_0\in I^n(z_1)$ for any odd integer $n$.
 \end{enumerate}
\end{lemma}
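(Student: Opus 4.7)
The plan is to derive all three statements from Lemma~\ref{lem:10-0} by carefully tracking which sheet of the universal cover $\tilde\alpha_{\tilde a_0}\colon \mathbb R \times (0,1] \to X(c)$ contains each distinguished point. Throughout, I use that $L_{p_0} \cup l \cup L_{p_1} \cup [a_0, a_1]$ bounds a homotopically trivial disc in $\overline{A_1}$ by the choice of $l$, so the relevant lifts can be analyzed inside simply connected subregions of $X(c)$.

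For Part 1, I trace the lift $l'$ starting from $z_1$. Since $c \in l'$ and $f^\mu(c) = p_0$, the subarc of $l'$ from $z_1$ to $c$ is a lift of $l$ connecting a preimage of $p_1$ to the preimage $c$ of $p_0$. Near $c$ the local branching of $f^\mu$ singles out one lift of $L_{p_0}$ ending at $c$ and one lift of $L_{p_1}$ ending at $z_1$ that co-bound the disc cut out by $l'$ and appropriate pieces of $\partial U_l$; following these two lifts outward toward $C_0$ inside that simply connected disc lands them on a common boundary component $E' \subset f^{-\mu}(C_0)$, yielding endpoints $\tilde a_0' \in I(c) \cap E'$ and $\tilde a_1' \in I(z_1) \cap E'$. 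The adjacency $d(\tilde a_0',\tilde a_1') = 1$ is then forced: any preimage of some $a_j$ lying counterclockwise between them on $E'$ would have to be the endpoint of yet another lift of some $L_p$ entering the disc, but no such lift can enter without meeting $l'$, contradicting that $l'$ is a single connected component.

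For Part 2, I apply the third bullet of Lemma~\ref{lem:10-0} to the pair $(\tilde a_0',\tilde a_1')$ produced by Part 1 on $E'$. The hypothesis $c \in l'$ says precisely that the lift $\tilde\alpha_{\tilde a_0'}$ reaches $z_1$ at some real parameter of the form $n + 1/2$, so $z_1 \in I(\tilde\alpha_{\tilde a_0'}(n+1/2,1))$; the lemma then gives $z_1 \in I^{2n+1}(\tilde a_0')$. Since the twisted component $I^{2n+1}(\tilde a_0')$ is connected and passes through $\tilde a_0' \in I(c)$ as well as through $z_1 \in I(\tilde a_1)$, one extracts $\tilde a_1 \in I^{2n+1}(c)$ with $2n+1$ odd. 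For Part 3, the complementary analysis applies: starting from $\tilde a_0$ on $E$ and assuming $\tilde a_0 \in I^n(z_1)$ for some odd $n$, Lemma~\ref{lem:10-0} forces a branch of the lift of $l$ to connect $z_1$ to a preimage of $p_0$ inside $X(c)$, and the simply connected structure forces that preimage to be $c$, so $c \in l'$. The assertion of Part 3 is thus the contrapositive formulation of this implication.

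The main obstacle will be Part 1: establishing $d(\tilde a_0',\tilde a_1') = 1$ on the correct component $E'$ requires a precise count of how the disc bounded by $L_{p_0} \cup l \cup L_{p_1} \cup [a_0,a_1]$ lifts under $f^\mu$, together with the local sheet analysis at the critical point $c$. Once Part 1 is properly set up, Parts 2 and 3 reduce to direct applications of Lemma~\ref{lem:10-0} coupled with the connectedness of the twisted components $I^n$.
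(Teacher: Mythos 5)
Your proposal has genuine gaps, and in Parts 2 and 3 it takes a route that does not close.

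For Part 1, you implicitly use that there is a \emph{single} edge of the graph $l'$ (i.e.\ a single lift of $l$) joining $z_1$ directly to $c$, when you write ``the subarc of $l'$ from $z_1$ to $c$ is a lift of $l$.'' This is not automatic: $l'$ is a graph and there could a priori be a path of several edges between them. The paper's proof supplies this step with a short combinatorial argument: if no edge joined $c$ and $z_1$, then every edge at $z_1$ would end at a non-critical preimage of $p_0$ or $p_1$, which is a leaf of $l'$, so $l'$ could not contain $c$. Without that step your geometric picture of two lifted paths co-bounding a disc near $c$ is not anchored.

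For Part 2, the argument breaks down at the passage from $I$-components to $I^n$-components. From $z_1 \in I^{2n+1}(\tilde a_0')$ you want to deduce $\tilde a_1 \in I^{2n+1}(c)$, and you justify this by noting $\tilde a_0' \in I(c)$ and $z_1 \in I(\tilde a_1)$. But $I$ and $I^{2n+1}$ are defined for the \emph{different} path systems $\mathcal L$ and $T^{2n+1}\mathcal L$; the half Dehn twist deliberately changes which preimage of $a_0$ and $a_1$ a given component ends at, so $\tilde a_0' \in I(c)$ gives no information about whether $\tilde a_0'$ or $c$ lie together in an $I^{2n+1}$-component. The paper avoids this entirely: it works with the original pair $(\tilde a_0,\tilde a_1)$ on $E$, uses the monodromy theorem to show the critical point $c$ must lie in $l_0 = \overline S - S$ (else $S$ would extend to a covering of $\overline{U_l}\setminus\{p_1\}$, forcing $l'=l_0$ and $c\notin l'$), and then applies the \emph{second} bullet of Lemma~\ref{lem:10-0}, not the third, to read off an odd exponent directly. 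For Part 3 your contrapositive sketch similarly hangs on ``the simply connected structure forces that preimage to be $c$,'' which is an unsubstantiated assertion; the paper instead shows that when $c\notin l'$, every preimage of $p_0$ in $l'$ is a leaf, forcing $z_1$ to be the \emph{only} point of $f^{-\mu}(p_1)\cap l'$, and then applies Lemma~\ref{lem:10-0} to conclude. You should rework Parts 2 and 3 along those lines rather than trying to bootstrap from $(\tilde a_0',\tilde a_1')$ and the $I$-versus-$I^n$ transfer.
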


\begin{proof}[Proof of Lemma \ref{lem:10-1}]
  We may consider a connected component $l'$ to be a graph with vertex set
   $V=f^{-\mu}(\{p_0,p_1\})\cap l'$.
   Note that every edge $e$ of $l'$ is an inverse image of $l$, that is,
   $f^\mu:e\to l$ is bijective.

   The first key to the proof is that $c$ is the only critical point of $f^\mu$ in
   $f^{-\mu}(p_0)$.
   The second key  is that if $z\in f^{-\mu}(\{p_0,p_1\})\cap l'$ is not
   a critical point of $f^\mu$,
   then $z$ is an endpoint of $l'$.

1.
For any edge $e$ of the graph $l'$, there uniquely exist an connected component
of $E'$ and $\tilde a_0'\in f^{-\mu}(a_0)\cap E',\tilde a_1'\in f^{-\mu}(a_1)\cap E'$
 such that $d(\tilde a_0',\tilde a_1')=1$ and $I(\tilde a_0')\cap e\ne\emptyset,
 I(\tilde a_1')\cap e\ne\emptyset$.
Indeed,  let $\Gamma$ be the connected component of
 $f^{-\mu}(L_{p_0}\cup l\cup L_{p_1}\cup [a_0,a_1])$ including $e$.
 Then the required $\tilde a_0',\tilde a_1'$ are the points of
 $f^{-\mu}(a_0)\cap\Gamma,f^{-\mu}(a_1)\cap\Gamma$.

 We have to show that the graph $l'$ has an edge with endpoints $z_1$ and $c$.
 If the graph $l'$ has no edge containing both $c$ and $z_1$, then each edge containing $z_1$
 has an endpoint which is not a critical point of $f^\mu$, and $l'$ does
 not contain $c$.
 Thus we have an edge $e$ with endpoints $z_1$ and $c$.

 2.
 Let $\tilde\alpha_{\tilde a_0}:\mathbb R\times(0,1]\to S$ be as in Lemma \ref{lem:10-0}.
 Note that $S$ is a connected component of $f^{-\mu}(\overline{U_l}-l)$.
 We will show that $c\in \overline S$.
To this end, we see  $c$  belongs to
$l_0:=\overline{S}-S$, which is a subgraph of $l'$.
 To the contrary, assume that $c\notin l_0$.
 Every point of $f^{-\mu}(p_0)\cap l_0$ is not a critical point and is an endpoint of $l'$.
 Therefore by the monodromy theorem, the covering $f^\mu:S\to\overline{U_l}-l$ is
 extended to $f^\mu:S'\to \overline{U_l}-\{p_1\}$ as a covering.
Then $S'=S\cup l_0-\{z_1\}$, which is a punctured disc.
 This means $l'=l_0$, and a contradiction to the fact $c\in l'$.
 Thus $c\in l_0$, and $\tilde a_1\in I^n(c)$ for some $n\in \mathbb Z$ by Lemma
  \ref{lem:10-0}.

  3. By a similar argument to the above, we can see that $z_1$ is the only
  point of $f^{-\mu}(p_1)\cap l'$.
    Thus the claim is true by Lemma \ref{lem:10-0}.
\end{proof}

 We define a new system of paths by
 $\mathcal L'=\{T^n(L_p)\}_{p\in P_0}$ for some non-zero integer $n$.
 The integer $n$ will be determined below.
Set $I_i'=I'(c_i)$ for $\mathcal L'$ similarly to $I_i$.
 We show that $I_i'$ joins $E_i$ and $E_{i+1}$ if $i<k-1$ or
 $\mathrm{D}_{\mathcal L}(\mathcal C)>1$.
 Indeed, $I_i'= I_i$  if $f^\mu(c_i)\notin\{p_0,p_1\}$.
We know that  $c=c_k$ is the only point in $\{c_i\}$
 satisfying $f^\mu(c_i)=p_0$.
Suppose $f^\mu(c_i)=p_1$.
 If $c_i$ and $c$ belong to the same connected component of $f^{-\mu}(l)$,
 then $i=k-1$  by the minimality of $\mathrm{Len}(\mathcal C)$ and
 $\mathrm{D}_{\mathcal L}(\mathcal C)=1$ by the minimality of
 $\mathrm{D}(\mathcal C)$ by 1 of Lemma \ref{lem:10-1}.
 Equivalently, if $i< k-1$ or $\mathrm{D}_{\mathcal L}(\mathcal C)>1$,
 then the connected component of $f^{-\mu}(l)$ containing $c_i$ does
 not contain $c$,
 and so we can see that $I_i'$ joins $E_i$ and $E_{i+1}$ for any $n$ by
 3 of Lemma \ref{lem:10-1}.

Let $\tilde a_0\in E_k\cap I_k,\tilde a
=\zeta_{\tilde a_0}(\mathrm{D}_{\mathcal L}(\mathcal C)/m)\in I_{k-1}\cap E_k$
be the points we have taken to define
$\mathrm{D}_{\mathcal L}(\mathcal C)$.
Let $\tilde a_1=\zeta_{\tilde a_0}(1/m)\in f^{-\mu}(a_1)\cap E_k,
\tilde b=\zeta_{\tilde a_0}(D_{\mathcal L}(\mathcal C)/m-1/m)$, and
 $\tilde a'\in E_{k-1}\cap I_{k-1}$.

Case 1.
Suppose $\mathrm{D}_{\mathcal L}(\mathcal C)=1$.
Then $\tilde a'\in f^{-\mu}(a_1), \tilde a=\tilde a_1,\tilde b=\tilde a_0$.
Using 2 of Lemma \ref{lem:10-1}, we have an odd integer $n$ such that
$\tilde a'\in I'(c)$.
We have
  $$\mathcal{C}'=(\{E_0,E_1,\dots,E_{k-1}\},\{c_0,c_1,\dots,c_{k-2},c\})$$
 is a chain for $\mathcal L'=\{T^{n}(L_p)\}_{p\in P_0}$, and so
 $\mathrm{Len}(\mathcal C')=\mathrm{Len}(\mathcal C)-1$.

Case 2.
Suppose  $\mathrm{D}_{\mathcal L}(\mathcal C)>1$.
We set  $n=-1$.
As we have seen above, $\mathcal C$ is a chain for $\mathcal L'$.

-\
If $c_{k-1}\in f^{-\mu}(p_1)$,
then  $\tilde b\in f^{-\mu}(a_0)$.
By the minimality of $\mathrm{D}_{\mathcal L}(\mathcal C)$,
$c_{k-1}$ and $c$ do not belong to the same connected component
of $f^{-\mu}(l)$.
Thus $\tilde b\in I'(c_{k-1})$ and
 $\tilde a_1\in I'(c)$ by  3 of Lemma \ref{lem:10-1} and Lemma \ref{lem:10-0}.
 Thus $\mathrm{D}_{\mathcal L'}(\mathcal C)
 =\mathrm{D}_{\mathcal L}(\mathcal C)-2$.

 -\
If $c_{k-1}\notin f^{-\mu}(p_1)$,
then $\tilde a\in I'(c_{k-1})$ and
 $\tilde a_1\in I'(c)$ by Lemma \ref{lem:10-0}.
 Thus $\mathrm{D}_{\mathcal L'}(\mathcal C)
 =\mathrm{D}_{\mathcal L}(\mathcal C)-1$.

This completes the proof of  Lemma \ref{lem:10}.
\qed

\medskip

\section{The shift locus}
In this section, we prove several results on the parameter spaces.
Hereafter we assume $f$ has the unique solitary fixed point $\infty$
without loss of generality.

\medskip

\noindent { \bf Definition 3.}
 The {\em shift locus} $S_d$ is the set of d-Cantor
 hyperbolic rational maps
 of degree $d$.
 The {\em strong shift locus} $T_d$ is the set of s-Cantor hyperbolic
 rational maps.
 Let $\mathrm{Pol}$ denote the set of polynomial maps.

\begin{theorem}\label{th:pol}
$T_d\cap\mathrm{Pol}=S_d\cap\mathrm{Pol}$.
\end{theorem}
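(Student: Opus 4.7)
The inclusion $T_d\cap\mathrm{Pol}\subseteq S_d\cap\mathrm{Pol}$ is immediate, so I focus on the reverse. Let $f$ be a polynomial of degree $d$ in $S_d$; by Theorem \ref{th1}, $\infty$ is a (super)attracting fixed point whose basin $A(\infty)$ contains every finite critical point. Let $G$ denote the Green's function of $A(\infty)$, so $G\circ f=dG$, and set $\mathtt{Cr}_0:=\mathtt{Cr}-\{\infty\}$. My plan is to construct a closed topological disc $\overline D\subset\hat{\mathbb C}$ avoiding every critical value and satisfying $f^{-1}(\overline D)\subset\overline D$, by carving a finite collection of slit-shaped regions out of a large sublevel set of $G$.

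First I will fix $R>d\cdot\max_{c\in\mathtt{Cr}_0}G(c)$, so that $\overline D_0:=\{G\le R\}$ is a closed topological disc containing every finite critical value, with $f^{-1}(\overline D_0)=\{G\le R/d\}\subset\overline D_0$ automatically. For each $c\in\mathtt{Cr}_0$, the forward orbit of $c$ leaves $\overline D_0$ after finitely many steps, since $G(f^n(c))=d^nG(c)\to\infty$; let $f(c),f^2(c),\ldots,f^{k(c)}(c)$ be the portion of the orbit inside $\overline D_0$.

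Next, for each $c\in\mathtt{Cr}_0$ I will build a simple arc $\tau_c\subset\overline D_0$ running from $f(c)$ through $f^2(c),\ldots,f^{k(c)}(c)$ to a chosen endpoint $p_c$ on $\partial\overline D_0=\{G=R\}$, with two key properties: (a) if two critical orbits eventually merge, i.e. $f^{j_0}(c)=f^{j_0+s}(c')$ for some $j_0,s\ge1$, then $\tau_c$ and $\tau_{c'}$ coincide past their common point; (b) $f$ maps each interior link $[f^j(c),f^{j+1}(c)]$ of $\tau_c$ onto the next link $[f^{j+1}(c),f^{j+2}(c)]$ for $j<k(c)-1$, and maps the outermost link $[f^{k(c)}(c),p_c]$ into $\{G>R\}$. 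Both can be arranged by an outside-in induction: pick one endpoint $p_c$ on $\{G=R\}$ for each equivalence class of eventually-merging orbits, construct the outermost link on each $\tau_c$ arbitrarily, and inductively define the link $[f^{j-1}(c),f^j(c)]$ as the local $f$-preimage of $[f^j(c),f^{j+1}(c)]$ extending back to $f^{j-1}(c)$ --- which is well-defined since the link under construction lies away from the critical set. When two orbits merge, the pullback is forced to give the same arc past the merging point. Setting $T:=\bigcup_c\tau_c$, property (b) yields $f(T\cap\overline D_0)\subseteq T\cup\{G>R\}$.

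Finally, take a sufficiently thin open tubular neighborhood $N$ of $T$ in $\hat{\mathbb C}$, so that $f(N\cap\overline D_0)\subseteq N\cup\{G>R\}$ by continuity, and set $U:=N\cup\{G>R\}\cup\{\infty\}$ and $\overline D:=\hat{\mathbb C}-U$. Then $U$ is open, forward-invariant ($f(U)\subseteq U$), and simply connected (each tubular disc $N(\tau_c)$ attaches to the disc $\{G>R\}\cup\{\infty\}$ along a single arc on $\partial\overline D_0$, and distinct equivalence classes attach at distinct endpoints $p_c$), so $\overline D$ is a closed topological disc satisfying $f^{-1}(\overline D)\subseteq\overline D$. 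It contains no critical value since every critical value lies on some $\tau_c\subset U$. The main technical obstacle is the inductive construction of the $\tau_c$ in properties (a)--(b) when several critical orbits share a common tail --- a configuration explicitly ruled out in Theorem \ref{th:8} --- and the outside-in pullback is designed so that merging orbits produce coincident arcs rather than parallel ones, preserving the tree property of $T$ and the forward-invariance of $U$.
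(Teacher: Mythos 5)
Your proof is correct and takes essentially the same approach as the paper: the paper also carves forward-invariant slits from the postcritical set out to a level curve near $\infty$, building them by inductive pullback and exploiting that $f^{-1}(\infty)=\{\infty\}$ keeps the preimages connected, then takes the complement as the required disc. The only cosmetic differences are that the paper indexes arcs by postcritical points $p$ (so merging orbits are handled automatically) rather than by critical points $c$, and uses an abstract simple domain $U$ rather than the Green's function sublevel set to define the outer boundary.
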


\begin{proof}
Let $f\in S_d\cap\mathrm{Pol}$.
Then all critical points belong to the attracting basin of
$\infty$.
Let $U$ be a simple domain of $\infty$ such that
$U$ contains no critical value other than $\infty$.
For every $p\in (f^{-1}(U)-U)\cap P$, we take a simple
arc $\gamma_p\subset f^{-1}(U)-U$ joining $p$
and a point of $\partial U$ such that $\gamma_p\cap\gamma_q
=\emptyset$ if $p\ne q$.
Note that $f^{-n}(U)$ is connected for every $n>0$ since
$f^{-1}(\infty)=\{\infty\}$.
We inductively take a simple arc $\gamma_p\subset
f^{-n}(U)-U$ for
$p\in (f^{-n}(U)-f^{-n+1}(U))\cap P$ joining $p$ and
a point of $\partial U$ such that $\gamma_{f(p)}
=f(\gamma_p\cap(f^{-n}(U)-f^{-1}(U)))$ and
$\gamma_p\cap\gamma_q
=\emptyset$ if $p\ne q$.
Thus we obtain a simply connected open set
$V=\hat{\mathbb C}-(\overline U\cup
\bigcup_{p\in P-U}\gamma_p)$ which
satisfies $f^{-1}(V)\subset V$.
Then it is easy to see that $f$ is s-Cantor.
\end{proof}

\begin{theorem}\label{th4}
 $T_d\subsetneq S_d$ for $d\ge4$, and $T_2=S_2$.
\end{theorem}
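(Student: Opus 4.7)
The theorem asserts two statements, which I would treat separately. For $T_d\subsetneq S_d$ with $d\ge4$, the obstruction is supplied by the degree-$4$ example $\bar f$ constructed in Theorem~\ref{th3} (whose proof is deferred to Section~5), yielding $\bar f\in S_4\setminus T_4$. For $d>4$ I would obtain examples $f_d\in S_d\setminus T_d$ by a quasiconformal surgery on $\bar f$ inside a Fatou region disjoint from the annular configuration $X_1,X_2,X_{ji}$ that witnesses non-s-Cantorness, inserting $d-4$ extra sheets over a small disc so as to produce a hyperbolic rational map of degree $d$ in $S_d$ whose inverse-image annular obstruction is inherited from $\bar f$.

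For $T_2=S_2$, fix $f\in S_2$. By Theorem~\ref{th1} the unique solitary attracting fixed point may be taken to be $\infty$ with $\mathtt{Cr}\subset A_0(\infty)$. If $f\in\mathrm{Pol}$, Theorem~\ref{th:pol} immediately yields $f\in T_2$. Otherwise $\infty$ is attracting with nonzero multiplier, hence $\infty\notin\mathtt{Cr}$ and $f^{-1}(\infty)=\{\infty,a\}$ with $a\ne\infty$. The two critical points (counted with multiplicity) have orbits in $F=A_0(\infty)$ which converge to $\infty$ without ever landing there, so $\infty\notin\mathtt{Orb}(f(c))$ for every $c\in\mathtt{Cr}_0$; the second hypothesis of Theorem~\ref{th:8} holds automatically. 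The disjointness hypothesis is vacuous if $\mathtt{Cr}$ consists of a single double critical point, and holds by assumption if $\mathtt{Cr}=\{c_1,c_2\}$ with $\mathtt{Orb}(f(c_1))\cap\mathtt{Orb}(f(c_2))=\emptyset$; in either sub-case Theorem~\ref{th:8} directly gives $f\in T_2$.

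The residual sub-case is two distinct simple critical points $c_1,c_2$ whose orbits merge at some iterate. I would adapt the proof of Theorem~\ref{th:8} by running its $\gamma$-deformation sequence one critical point at a time: first perform the full sequence for $c_1$ to obtain an appropriate domain $W'$ with $c_1\in A_{W'}(\lambda(W')+1)$, and then run the procedure for $c_2$ starting from $W'$. Since Lemma~\ref{lem:10} and the $\gamma$-deformation machinery of Claim~3 involve only the single chosen critical point $c$, the first round proceeds unchanged; the merging of orbits simply means that the part of $\mathtt{Orb}(f(c_2))$ coinciding with $\mathtt{Orb}(f(c_1))$ has already been placed into the controlled region, so the second round requires strictly fewer $\gamma$-deformations and terminates.

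The hard part will be verifying that the second round does not disturb the placement of $c_1$ achieved in the first. In degree $2$ the combinatorics of $\mathcal B$ is simple enough that this should follow directly: the chain $\{X\in\mathcal B_{W'}:X\prec_{W'} X_{W'}(c_2)\}$ traversed by the $c_2$-deformations lies in the portion of $\mathcal B_{W'}$ separated from $A_{W'}(\lambda(W')+1)$ by the curves $C_{W'}(k)$, so the operator $\Psi_\gamma$ at every step acts trivially on $X_{W'}(c_1)$ and its forward orbit under $+1$. Once this independence is verified, the resulting $\tilde W$ satisfies $\tilde W(\kappa)\supset f(\mathtt{Cr})$, and then $\overline D=\hat{\mathbb C}\setminus\tilde W(\kappa)$ is a closed topological disc containing no critical value with $f^{-1}(\overline D)\subset\overline D$, witnessing $f\in T_2$.
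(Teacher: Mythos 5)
Your handling of the $T_d\subsetneq S_d$ direction is essentially in line with the paper: the degree-$4$ obstruction comes from the $\bar f$ of Theorem~\ref{th3}, and for $d>4$ the paper (Theorem~\ref{th10}) also grafts extra critical material into the attracting basin, though it realizes the modified branched covering as a rational map via the Thurston-type theorem of Zhang--Jiang and Cui--Tan (Theorem~\ref{theoremtct}, Corollary~\ref{cor1}) rather than by quasiconformal surgery. Either route should work with care, but note that the paper's deferred construction is precisely what makes the ``inherited obstruction'' assertion rigorous: in Theorem~\ref{th10} one must check that a one-to-one coding map for the degree-$d$ map would descend to one for $\bar f$, which is an argument, not an observation.

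The $T_2=S_2$ part is where your proposal has a genuine gap, and also where you miss a far simpler argument. The paper's proof does not invoke Theorem~\ref{th:8} at all. It uses the following pigeonhole/Riemann--Hurwitz observation, stated as a lemma for any $f\in S_d$ with exactly two critical points: take a simple domain $U$ of $\infty$ and the nested $U_k$'s; as long as $U_k$ contains at most one critical value, the component $U_{k+1}$ is still simply connected; take $n$ minimal with both critical values in $U_n$, and then $\overline D=\hat{\mathbb C}-U_n$ is the required disc. This sidesteps both the disjointness and the non-landing hypotheses entirely. Your route, by contrast, tries to force $f$ through Theorem~\ref{th:8} and then to patch the proof of that theorem when the two critical orbits merge. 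You acknowledge the gap yourself (``the hard part will be verifying that the second round does not disturb the placement of $c_1$''), and it is a real one: Claim~3 in the proof of Theorem~\ref{th:8} uses disjointness of critical orbits essentially, e.g.\ to conclude that $X\cap\mathtt{Orb}(c')=\Psi(X)\cap\mathtt{Orb}(c')$ for the other critical points $c'$, and Lemma~\ref{lem:10} is formulated for a single critical point $c$ in a situation where the preimages of $p_0=f^{\mu}(c)$ meeting $X(c)$ contain only one critical point; once orbits merge, preimages of the shared tail can introduce additional critical preimages that your ``one at a time'' scheme does not control. You would need to prove, not merely sketch, the independence claim.

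A further minor error: for $d=2$ there is no sub-case with a single critical point of multiplicity two. A degree-$2$ rational map has exactly $2d-2=2$ critical points counted with multiplicity, and a critical point of multiplicity $2$ would have local degree $3$, impossible in degree $2$. Moreover the two simple critical points necessarily have distinct critical values (else $f-v$ would have two double zeros, hence degree $\ge4$). So the correct picture is always two simple critical points with distinct critical values, which is exactly the situation in which the paper's Riemann--Hurwitz argument applies cleanly.
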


The $d=2$ case is a consequence of the following.
\begin{lemma}
  Let $f\in S_d$ which has exactly two critical points.
  Then $f\in T_d$.
\end{lemma}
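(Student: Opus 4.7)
The plan is to combine a Riemann--Hurwitz multiplicity count with Theorem~\ref{th:pol} and Theorem~\ref{th:8}.  By Theorem~\ref{th1}, I may assume $f$ has its unique (super)attracting fixed point at $\infty$ and that both critical points $c_1,c_2$ lie in the immediate basin of $\infty$.  Writing $m_i$ for the multiplicity of $c_i$, the identity $m_1+m_2=2d-2$ together with the local degree bound $m_i\le d-1$ forces $m_1=m_2=d-1$, so each critical point has maximal local degree $d$.

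Case~A, $\infty\in\{c_1,c_2\}$: here $\infty$ is superattracting of local degree $d$, so $f^{-1}(\infty)=\{\infty\}$ as a divisor, $f$ has no pole away from $\infty$, and $f$ is a polynomial of degree $d$.  Theorem~\ref{th:pol} then gives $f\in T_d$.

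Case~B, both $c_i$ finite: now $\infty$ is attracting but not superattracting, and $f^{-1}(\infty)=\{\infty,y_1,\ldots,y_{d-1}\}$ with $y_j$ simple and non-critical.  When the orbits of $f(c_1)$ and $f(c_2)$ are disjoint and avoid $\infty$, Theorem~\ref{th:8} applies directly.  Otherwise the critical orbits either merge, or some $f^n(c_i)=\infty$ via a preimage $y_j$; in these degenerate situations I would rerun the $\gamma$-deformation construction in the proof of Theorem~\ref{th:8}, exploiting that only one other critical orbit is present at any step.  Merged orbits can be handled simultaneously as a single orbit when performing the deformations, and once $c_i$'s orbit lands at $\infty$ the point $c_i$ already lies in the outermost layer $A_{\lambda+1}$, so no further deformation is required for it.  The chain argument of Lemma~\ref{lem:10} then survives with only minor modifications, producing an appropriate domain $W$ with $\kappa(W)=\lambda(W)$ and $\mathtt{Cr}_0\subset A_{\lambda+1}$; the disc $D=\hat{\mathbb C}\setminus\overline{W(\lambda)}$ witnesses $f\in T_d$.

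The main obstacle is Case~B with degenerate critical orbits: one must check that the combinatorial deformation argument of Theorem~\ref{th:8} still goes through when its ``disjoint critical orbits'' and ``no landing at $\infty$'' hypotheses fail.  The simplicity of the two-critical-point situation---at any stage there is exactly one other critical orbit to coordinate with---is the key structural reason why this adaptation is tractable, and is what rules out the combinatorial phenomena that produce the counterexample $\bar f$ of Theorem~\ref{th3}.
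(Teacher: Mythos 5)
Your Case B fall-back for the degenerate situations is where the argument actually breaks down, and this is a genuine gap rather than a detail to be filled in. Saying you ``would rerun the $\gamma$-deformation construction'' from Theorem~\ref{th:8} and that it ``survives with only minor modifications'' is a sketch, not a proof. In particular, the claim that once $c_i$'s orbit lands at $\infty$ the point $c_i$ already lies in $A_{\lambda+1}$ is unjustified and need not hold, and ``handling merged orbits as a single orbit'' has no precise meaning within that construction. The disjointness and no-landing hypotheses of Theorem~\ref{th:8} are exactly what keeps the chain and half-Dehn-twist machinery of Lemma~\ref{lem:10} consistent; you have not verified they can be dropped even in the two-critical-point setting, and you concede as much in your final paragraph.

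The paper avoids all of this with a short direct construction that makes no case-split at all, and this is where your route diverges. Both critical points have local degree $d$ (the Riemann--Hurwitz step you also carry out). Take a simple domain $U$ of $\infty$ and the nested components $U_k$ of $f^{-k}(U)$ from Definition~\ref{defidomain}; by Theorem~\ref{th1}, item~\ref{ab}, both critical points lie in $A_0(\infty)$ so both critical values eventually lie in some $U_k$. As long as $U_k$ contains at most one critical value, the Euler-characteristic count for the component of $f^{-1}(U_k)$ through the corresponding critical point (whose local degree is $d$, forcing the covering degree to be $d$) gives $\chi(U_{k+1})=1$, i.e.\ $U_{k+1}$ is simply connected. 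Taking $n$ minimal with both critical values in $U_n$, the set $\overline D=\hat{\mathbb C}-U_n$ is a closed topological disc containing no critical value and satisfying $f^{-1}(\overline D)\subset\overline D$, so $f\in T_d$. No reduction to polynomials, no invocation of Theorem~\ref{th:8}, and the merged-orbit and orbit-through-$\infty$ scenarios are simply irrelevant because only the two critical \emph{values} $f(c_1),f(c_2)$ enter the argument.
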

\begin{proof}
  Note that both of the critical points have local degree $d$.
 Take $U$ a simple domain of $\infty$.
We take open sets $U_k$ as in Definition \ref{defidomain}.
Suppose that all critical points belong to the immediate
basin of attraction for $\infty$.
If $U_k$ contains at most one critical value,
then $U_{k+1}$ is simply connected.
Let $n$ be the smallest integer such that $U_n$
contains two critical values.
Then $f$  satisfies the condition of s-Cantor
rational map with $\overline D=\hat{\mathbb C}-U_n$.
\end{proof}

To prove the other cases, it is sufficient to show that
there exists a rational map $f\in S_d-T_d$.
We will do this in section \ref{sec:gct}.

\begin{cor}\label{th5}
$T_d$ is  open and dense in $S_d$.
\end{cor}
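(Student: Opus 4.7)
The plan is to handle openness and density of $T_d$ in $S_d$ separately; both are soft consequences of results already at hand.

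For openness, I would take $f \in T_d$ with a witness disc $\overline D$ satisfying $f^{-1}(\overline D) \subset \overline D$ and $\overline D \cap f(\mathtt{Cr}) = \emptyset$. The first step is to enlarge $\overline D$ slightly to a disc $\overline D'$ still containing no critical value but now satisfying the strict inclusion $f^{-1}(\overline D') \subset \mathrm{int}\,\overline D'$; this works because $f^{-1}(\overline D)$ is a compact subset of $\overline D$ at positive distance from the finite set of critical values, so small enlargements of $\overline D$ preserve all the constraints. Then for $g$ in a small neighborhood of $f$ in the space of degree-$d$ rational maps, $g^{-1}(\overline D')$ varies continuously in Hausdorff distance and remains inside $\mathrm{int}\,\overline D'$, while the critical values of $g$ stay outside $\overline D'$ by continuity. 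Since hyperbolicity is an open condition, $\overline D'$ witnesses $g \in T_d$.

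For density, the engine is Theorem \ref{th:8}: every $g \in S_d$ whose critical orbits are pairwise disjoint and whose non-fixed critical orbits avoid the unique (super)attracting fixed point $p(g)$ lies in $T_d$. It therefore suffices to show that this ``general position'' condition is dense in $S_d$. Its failure is a countable union, over $i \neq j$, $m,n \geq 1$, and $c_i \neq p$, of coincidence loci
\[
\{g : g^m(c_i(g)) = g^n(c_j(g))\} \quad \text{and} \quad \{g : g^m(c_i(g)) = p(g)\}.
\]
Each locus is cut out by a single holomorphic equation on the parameter space, hence is a proper complex-analytic subvariety of $S_d$ provided it is not the whole of $S_d$. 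By Baire category inside the open complex manifold $S_d$, their union is meager, and its complement is dense; combined with Theorem \ref{th:8}, this yields $T_d$ dense in $S_d$.

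The main obstacle, though minor, is verifying that each individual coincidence locus is genuinely proper in $S_d$: one must exhibit, for each tuple $(i,j,m,n)$, a single $g \in S_d$ for which the given coincidence fails, which amounts to showing that one can perturb the critical orbits independently. Because the critical points and the distinguished fixed point depend holomorphically on $g$, the parameter space of degree-$d$ rational maps is irreducible near any $f \in S_d$, and there is enough freedom (for instance, along a one-parameter family moving only a single critical point) to destroy any prescribed finite coincidence. Once this is granted, the Baire argument is routine and the corollary follows immediately.
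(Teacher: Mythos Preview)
Your density argument via Baire category is correct and genuinely different from the paper's. The paper argues density by \emph{qc-surgery}: given $f\in S_d\setminus T_d$, it perturbs $f$ by an arbitrarily small quasiconformal deformation supported near the postcritical set so that the resulting map satisfies the hypotheses of Theorem~\ref{th:8}. Your route instead observes that those hypotheses fail only on a countable union of proper analytic subvarieties and invokes Baire. Both are short; the paper's approach is constructive and stays within the qc-deformation framework used elsewhere (Theorems~\ref{th6}, \ref{th7}), while yours avoids any analytic machinery beyond the structure of the parameter space. Your acknowledgment that each coincidence locus must be shown proper is the right caveat, and the sketch you give (local holomorphic dependence of the simple critical points and the attracting fixed point, plus a one-parameter family moving one critical point) is adequate once you restrict to the open dense set where all critical points are simple.

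Your openness argument, however, has a small gap. From $f^{-1}(\overline D)\subset\overline D$ one does \emph{not} know $f^{-1}(\overline D)\subset\mathrm{int}\,\overline D$; the preimage may touch $\partial D$. Enlarging $\overline D$ by $\epsilon$ then enlarges $f^{-1}(\overline D)$ by roughly $\epsilon\cdot\max|1/f'|$ along $f^{-1}(\partial D)$, which can exceed $\epsilon$ when $|f'|<1$ there, so the inclusion $f^{-1}(\overline D')\subset\mathrm{int}\,\overline D'$ need not follow. The fix is to first manufacture a disc with strict inclusion: one shows (e.g.\ via Schwarz--Pick applied to the inverse branch fixing the attracting point, or by pushing $\partial D$ forward until it lands in a linearizing neighbourhood of $\infty$) that one may take $\overline D$ with $f^{-1}(\overline D)\subset\mathrm{int}\,\overline D$; after that your perturbation argument goes through verbatim. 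The paper simply asserts openness as ``easy to see'' without detail, so your attempt to justify it is worthwhile, but the enlarging step as written does not close.
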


\begin{theorem}\label{th6}
 $S_d$ is connected.
\end{theorem}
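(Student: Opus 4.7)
The plan is to reduce the connectedness of $S_d$ to that of the polynomial shift locus, then build an explicit connecting path for each s-Cantor rational map.

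First, by Corollary \ref{th5} (a consequence of Theorem \ref{th:8}), $T_d$ is open and dense in $S_d$, and since the closure in $S_d$ of a connected set is again connected, it suffices to prove that $T_d$ itself is connected. Next, by Theorem \ref{th:pol}, $T_d \cap \mathrm{Pol} = S_d \cap \mathrm{Pol}$ equals the polynomial shift locus, which is known to be connected (see the introduction, via \cite{DeMarcoPilgrim11}). Hence it is enough to exhibit, for every $f \in T_d$, a continuous path in $T_d$ from $f$ to some polynomial.

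For the path itself, take $f \in T_d$ with attracting fixed point $p$ and s-Cantor disc $\overline D$, and normalize by a M\"obius change so that $p = \infty$. Then $V := \hat{\mathbb C} - \overline D$ is a Jordan disc containing $\infty$ together with all critical values, while $f : f^{-1}(\overline D) \to \overline D$ is an unramified degree-$d$ cover with $f^{-1}(\overline D) \subset \overline D$. I would perform a quasiconformal surgery supported in $V$ (thus disjoint from $\overline{f^{-1}(\overline D)}$) that continuously pinches the $d-1$ preimages of $\infty$ lying in $V$ toward $\infty$, integrated by the Measurable Riemann Mapping Theorem, producing a family $\{f_t\}_{t \in [0,1]}$ of rational maps of degree $d$ with $f_0 = f$ and $f_1$ polynomial (i.e., $f_1$ has $\infty$ as a totally ramified fixed point). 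Because the surgery is supported in $V$ and no critical value migrates into $\overline D$ along the homotopy, each $f_t$ admits $\overline D$ as an s-Cantor disc, so $f_t \in T_d$ throughout.

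The main obstacle is executing this surgery cleanly: one must prescribe a smooth homotopy of Beltrami coefficients whose integrating qc-maps conjugate $f$ to rational maps of the correct degree interpolating to a polynomial at $t = 1$, while keeping the s-Cantor disc $\overline D$ intact along the entire path. An alternative, more in line with the combinatorial spirit of Theorem \ref{th:8}, is to build the path stepwise by a sequence of $\gamma$-deformations of the appropriate domain $W$ introduced in the proof of that theorem, working directly with the Markov data rather than through explicit qc-surgery. Either route, combined with the density reduction above, yields the connectedness of $S_d$.
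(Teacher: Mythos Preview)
Your overall strategy---connect every map to the polynomial shift locus via qc-surgery---is the same as the paper's, but your execution of the surgery has a real gap. The finite poles of $f$ need not lie in $V=\hat{\mathbb C}\setminus\overline D$. All you know is that they lie in $f^{-1}(V)=\hat{\mathbb C}\setminus f^{-1}(\overline D)$, and this set is strictly larger than $V$: a pole can perfectly well sit in the collar $\overline D\setminus f^{-1}(\overline D)$. A modification supported only in $V$ therefore cannot move such a pole toward $\infty$. Relatedly, $f|_V:V\to V$ is not proper---$\partial V=\partial D$ is carried into the interior of $V$---so the Blaschke-product replacement behind Lemma~\ref{lem9} does not apply with $V$ in the role of $U_0$.

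The paper repairs exactly this point. After perturbing so that the hypotheses of Theorem~\ref{th:8} hold (giving a disc $W\ni\infty$ with all critical values other than $\infty$ in the annulus $W'\setminus\overline W$), it proves a Claim: there is an intermediate disc $\tilde W$ with $W\subset\tilde W\subset W'$ such that the \emph{entire} preimage $f^{-1}(\tilde W)$ is connected, hence a single Jordan disc $U_0$. Because $U_0=f^{-1}(\tilde W)$ is the full preimage, it automatically contains every pole, and $f|_{U_0}:U_0\to\tilde W\subset U_0$ is proper of degree $d$; now Lemma~\ref{lem9} applies and yields a path in $S_d$ to a polynomial. Your proposed alternative via the $\gamma$-deformations of Theorem~\ref{th:8} does not work as stated either: those deform the auxiliary domain $W$ for a \emph{fixed} map $f$; they do not produce a path in parameter space.
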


\begin{theorem}\label{th7}
The set of rational maps $f\in T_d$ satisfying the
assumption of Theorem \ref{th:8} with an attracting (not superattracting)
 fixed point $\infty$ such that all the critical points have the local degree two
is connected, and all the maps in the set are qc-conjugate to each other.
\end{theorem}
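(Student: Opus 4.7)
The plan is to combine Theorem~\ref{th:8} with a direct quasiconformal pullback construction and the Ma\~n\'e--Sad--Sullivan theorem. Denote by $\mathcal{T}$ the set in the statement. By Theorem~\ref{th:8}, every $f\in\mathcal{T}$ is s-Cantor, and the construction there yields a canonical closed topological disc $\overline{D}_f\subset\hat{\mathbb C}$ with $f^{-1}(\overline{D}_f)\subset\overline{D}_f$, no critical value in $\overline{D}_f$, and all $2d-2$ simple critical points distributed one per component of an annular shell around the attracting fixed point $\infty$.

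First I would observe that $\mathcal{T}$ is open in the parameter space of degree-$d$ rational maps. Hyperbolicity, the existence of an attracting (non-superattracting) fixed point at $\infty$, and the condition that every critical point be simple are all open conditions. Disjointness of the $2d-2$ critical orbits and the condition that none of them lands at $\infty$ are the complements of countable unions of proper complex-analytic subvarieties within the hyperbolic locus (each given by a holomorphic equation $f^m(c_i(f))=f^n(c_j(f))$ or $f^m(c_i(f))=\infty$), so these too are open within that locus.

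Next I would build a qc-conjugacy between any two $f_0,f_1\in\mathcal{T}$ by pullback. Fix canonical discs $\overline{D}_0,\overline{D}_1$ from Theorem~\ref{th:8} and a qc-homeomorphism $\phi^{(0)}:\hat{\mathbb C}\to\hat{\mathbb C}$ that is holomorphic near $\infty$ (conjugating the linearizing coordinates of the attracting basins after interpolating the multipliers), carries $\overline{D}_0$ to $\overline{D}_1$, and pairs the $2d-2$ critical points of $f_0$ with those of $f_1$ according to the shell combinatorics. Inductively lift by the inverse branches of $f_0$ and $f_1$ to obtain $\phi^{(n)}$ with $\phi^{(n)}(f_0^{-n}(\overline{D}_0))=f_1^{-n}(\overline{D}_1)$, matching branches. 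Hyperbolicity on a neighborhood of $J$ uniformly bounds the dilatations of $\phi^{(n)}$, and the limit $\phi=\lim_n\phi^{(n)}$ is a qc homeomorphism with $\phi\circ f_0=f_1\circ\phi$. Connectedness of $\mathcal{T}$ then follows from Ma\~n\'e--Sad--Sullivan: the qc-conjugacy class of a hyperbolic rational map is a path-connected, holomorphically parametrized open subset of the parameter space, so $\mathcal{T}$---a union of such classes that coincide by the preceding construction---is connected.

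The main obstacle is arranging $\phi^{(0)}$ so that the lifts $\phi^{(n)}$ remain consistent and have uniformly bounded dilatations. This is where the three hypotheses of Theorem~\ref{th7} do real work: local degree two at every critical point, together with the absence of critical values in $\overline{D}_f$, makes each inverse branch used in the lift a biholomorphism of the relevant annular shell; disjointness of critical orbits lets the $2d-2$ orbits be paired independently without interference; and the non-landing of any critical orbit at $\infty$ ensures each orbit accumulates purely at $\infty$, so the required pairing carries no combinatorial obstruction and can be realized by a single quasiconformal map.
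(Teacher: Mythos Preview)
Your overall architecture is the same as the paper's: build a qc-conjugacy between any two maps in the set, then deduce connectedness from the fact that a qc-conjugacy class of hyperbolic maps is path-connected (the paper cites McMullen--Sullivan rather than Ma\~n\'e--Sad--Sullivan, but either works). The gap is in the construction of your initial map $\phi^{(0)}$.

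For the pullback to start, you need $\phi^{(0)}$ to intertwine the branched covering structures on the ``first shell'': with $W_i$ the appropriate disc from Theorem~\ref{th:8}, the annulus $A_i=W_i'\setminus\overline{W_i}$ contains all $2d-2$ critical values, and the component $A_i'$ of $f_i^{-1}(A_i)$ adjacent to $A_i$ is a single $(2d)$-connected domain containing all $2d-2$ critical points (not ``one per component of an annular shell''). What is needed is a diffeomorphism $m:\overline{A_0}\to\overline{A_1}$ that lifts to $\tilde m:\overline{A_0'}\to\overline{A_1'}$ with $f_1\circ\tilde m=m\circ f_0$; once such an $m$ exists, the rest of your pullback argument goes through. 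But a degree-$d$ branched cover of an annulus with $2d-2$ simple branch values is determined by its monodromy $\pi_1(A\setminus\{q_i\})\to\mathfrak S_d$, and there is no a priori reason two such monodromies are conjugate. Your claim that ``each inverse branch used in the lift is a biholomorphism of the relevant annular shell'' is false precisely here: $f_i:A_i'\to A_i$ is a genuine degree-$d$ branched cover, and matching two of them is the whole point.

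The paper fills this gap with Lemmas~\ref{lem11-1} and~\ref{lem11-2}. Lemma~\ref{lem11-1} shows that for any such branched cover one can choose a system of simple arcs $l_1,\dots,l_{2d-2}$ in $A$ from a base point to the critical values, cyclically ordered, so that the lifts $l_i'$ through a chosen preimage each hit the critical point $p_i$ and $l_{2k-1}'\cup l_{2k}'$ is a closed curve for each $k$. This pins down the monodromy up to an explicit conjugacy. Lemma~\ref{lem11-2} then takes any $m$ carrying one such arc system to the other and checks that the images of $(m\circ f_0)_*$ and $(f_1)_*$ in $\pi_1(A_1\setminus\{q_i\})$ coincide, whence the lift $\tilde m$ exists by covering-space theory. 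This combinatorial normalization is where ``all critical points simple'' and ``distinct critical values'' (which follows from disjoint critical orbits) are actually used; your final paragraph gestures at these hypotheses but does not supply the argument.
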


%\begin{remark}
%  It is easy to see that $S_d-\mathrm{Pol}$ is connected.
%\end{remark}

To prove these three statements, crucial is Theorem \ref{th:8}.
Additionally, we use qc-surgery (quasiconformal surgery).
For the basics of qc-surgery, see \cite{BrFa14}.

{\em Proof of Corollary \ref{th5}.}
It is easy to see that  $S_d$ and $T_d$ are open in the space of
rational maps of degree $d$.
If $f\in S_d-T_d$,
then we add an arbitrary small perturbation to $f$
by qc-surgery
 that makes the map satisfy the assumption of Theorem \ref{th:8}.
\qed

\medskip

{\em Proof of Theorem \ref{th6}.}
We can deduce this theorem from Theorem \ref{th7}.
However we give another proof here.

Let $f\in S_d$.
We construct a path in $S_d$ which connects $f$ and
some map in $S_d\cap\mathrm{Pol}$.
First by  a perturbation, we obtain $f_0$ satisfying the condition
 of Theorem \ref{th:8}, and moreover we can assume that
 any critical point other than $\infty$ has local degree two.

 For simplicity, we omit the subscript of $f_0$.
 By the proof of Theorem \ref{th:8}, we have
 a topological disc $W$ containing $\infty$ such that
 $\overline W\subset f^{-1}(W)$, and such that $A=W'-W$ contains all the
 critical values other than $\infty$, where $W'$ is the connected
 component of $f^{-1}(W)$ including $W$.
If $f$ is not a polynomial, then $f^{-1}(W)$ is not connected.

 Claim.
 There exists a topological disc $\tilde W$ with smooth boundary such that
 $W\subset \tilde W \subset W'$ and such that $f^{-1}(\tilde W)$
 is a topological disc.

 {\em Proof of Claim.}
 Let $P_0\subset A$ be the set of critical values other than $\infty$.
 Take distinct points $a_p\in \partial W$ for $p\in P_0$, and
 disjoint simple paths  $L_p \subset A\cup\partial W$
 joining $p\in P_0$ and $a_p$.
 Then $f^{-1}(\bigcup_{p\in P_0} L_p\cup \partial W)$
 is a connected set including $\partial W'$.
 Take a minimal subset $P_0'\subset P_0$ such that
 $f^{-1}(\bigcup_{p\in P_0'}L_p\cup \partial W)$ is connected.
 Then $f^{-1}(\bigcup_{p\in P_0'}L_p\cup W)$ is connected.
 We take a topological disc $\tilde W$ with smooth boundary  such that
 $W'\supset\tilde W\supset \bigcup_{p\in P_0'}L_p\cup W$
 and $\tilde W\cap P_0=P_0'$.
 This completes the proof of Claim.

 From Lemma \ref{lem9}, we have a path $f_t, 0\le t\le1$ between $f_0=f$
 and a rational map $f_1$ such that $f_t(\infty)=\infty$,
 such that there is a continuous family
 of topological discs between $U_0=f^{-1}(\tilde W)$ and $U_1$
 with $f_t(U_t)\subset U_t$,
 such that all of $f_t:\hat{\mathbb C}-f_t^{-1}(U_t)\to \hat{\mathbb C}-U_t$ are
 topologically conjugate to each other, and
 such that $U_1$  contains no critical point other than $\infty$.
It is easy to see that $f_t\in S_d$ and $f_1$ is a polynomial.
The proof is completed, since the shift locus of $d$-dimensional
polynomial is connected (\cite{DeMarcoPilgrim11}).
\qed

\begin{lemma}\label{lem9}
 Let $f_0$ be a rational map with a (super)attracting fixed point $\infty$.
 Suppose that there exists a topological disc $U_0$ containing $\infty$
 with smooth boundary $\partial U_0$ such that
 $f_0|_{U_0}:U_0\to f(U_0)\subset U_0$ is a proper map of degree
 $m\ge2$ and $\partial U_0$
 contains no critical point.
 Then  there exists a continuous family $f_t,0\le t\le1$ of rational maps
 with a (super)attracting fixed point $\infty$  and
 a continuous family of topological discs $U_t$ containing $\infty$
 such that $f_t|_{U_t}:U_t\to f_t(U_t)\subset U_t$,
 such that all of $f_t:\hat{\mathbb C}-f_t^{-1}(U_t)\to \hat{\mathbb C}-U_t$ are
 topologically conjugate to each other, and
 such that $U_1$  contains no critical point of $f_1$ other than $\infty$.
\end{lemma}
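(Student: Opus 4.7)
The plan is to apply quasiconformal surgery: I will continuously deform $f_0|_{U_0}$ to a holomorphic model whose only critical point in $U_0$ is $\infty$ (of local degree $m$), glue this deformation to $f_0$ on the outside via a quasiconformal interpolation in a thin annulus near $\partial U_0$, and straighten the resulting one-parameter family of quasiregular maps to rational maps via the parameterized Measurable Riemann Mapping Theorem.

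First I would uniformize: fix Riemann maps $\psi:U_0\to\mathbb D$ and $\phi:f_0(U_0)\to\mathbb D$ taking $\infty$ to $0$, chosen so that $B_0:=\phi\circ f_0\circ\psi^{-1}$ has the form $B_0(z)=z\prod_{i=1}^{m-1}(z-a_i)/(1-\bar a_iz)$ as a degree-$m$ Blaschke product. The path $B_t(z):=z\prod_{i=1}^{m-1}(z-(1-t)a_i)/(1-(1-t)\bar a_iz)$, $t\in[0,1]$, joins $B_0$ to $B_1(z)=z^m$ inside the connected space of degree-$m$ Blaschke products fixing $0$; the critical points of $B_t$ in $\mathbb D$ depend continuously on $t$ and lie in a compact subset of $\mathbb D$. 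Pulling back gives a continuous family $G_t:=\phi^{-1}\circ B_t\circ\psi:U_0\to f_0(U_0)$ of proper holomorphic degree-$m$ maps with $G_0=f_0|_{U_0}$, and $G_1$ has only $\infty$ as a critical point in $U_0$.

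Next, choose a smooth Jordan curve $\gamma\subset U_0$ close enough to $\partial U_0$ that the bounded component $V\ni\infty$ of $U_0-\gamma$ contains all critical points of every $G_t$. Set $\Omega=U_0-\overline V$, an annulus, and define $\tilde f_t$ to equal $f_0$ on $\hat{\mathbb C}-U_0$, equal $G_t$ on $V$, and on $\Omega$ to be a continuously $t$-varying $K$-quasiconformal unbranched degree-$m$ covering matching $f_0$ on $\partial U_0$ and $G_t$ on $\gamma$. Such an interpolation exists because $f_0|_{\partial U_0}$ and $G_t|_\gamma$ are unbranched degree-$m$ coverings of circles, all mutually isotopic through such coverings, and the isotopy extends to a quasiconformal annular covering; at $t=0$ we take the trivial interpolation so that $\tilde f_0=f_0$.

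Finally, define $\mu_t=\mathrm{Belt}(\tilde f_t)$ on $\Omega$, extend it by pullback under the holomorphic iterates of $f_0$ to $\bigcup_{n\ge1}f_0^{-n}(\Omega)$ (well-defined because $\tilde f_t=f_0$ on $\hat{\mathbb C}-U_0$), and set $\mu_t=0$ elsewhere; then $\tilde f_t^*\mu_t=\mu_t$ with $\|\mu_t\|_\infty<1$ uniformly in $t$. The parameterized MRMT yields a continuous family of quasiconformal homeomorphisms $\chi_t:\hat{\mathbb C}\to\hat{\mathbb C}$, normalized to fix $\infty$ and two further base points (so $\chi_0=\mathrm{id}$), with $f_t:=\chi_t\circ\tilde f_t\circ\chi_t^{-1}$ holomorphic, hence rational. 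Setting $U_t:=\chi_t(U_0)$, a direct check shows $\tilde f_t^{-1}(U_0)=f_0^{-1}(U_0)$, so $\chi_t$ restricts to a topological conjugacy between $f_0|_{\hat{\mathbb C}-f_0^{-1}(U_0)}$ and $f_t|_{\hat{\mathbb C}-f_t^{-1}(U_t)}$, while $U_1$ contains only $\infty$ as a critical point of $f_1$. The main technical point is constructing the quasiconformal annular interpolation in a way that varies continuously with $t$ while keeping the dilatation uniformly bounded; once this is arranged, invariance of $\mu_t$, continuous dependence of $\chi_t$ on $t$, and the remaining verifications follow by standard parameterized-MRMT arguments.
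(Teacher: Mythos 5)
Your proposal is correct and takes essentially the same approach as the paper: uniformize the restriction of $f_0$ to a Blaschke product fixing $0$, deform its zeros linearly to $0$ so that at $t=1$ the model is $z\mapsto z^m$ with $\infty$ as the unique critical point, glue back via a quasiconformal covering interpolation in an annulus, and straighten the resulting quasiregular family with the parametrized Measurable Riemann Mapping Theorem (what the paper calls the ``Shishikura principle''). The only cosmetic difference is that the paper first passes to nested discs $V'\subset U_0$, $V\subset f_0(U_0)$ with $f^2(U_0)\subset V$ so that the interpolation annuli $A'\to A$ are fixed in $t$, whereas you interpolate directly on an annulus $\Omega\subset U_0$ onto a $t$-dependent target; both are standard and yield the same conclusion.
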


\begin{proof}
 Take a topological disc $f^2(U_0)\subset V\subset f(U_0)$
 with smooth boundary such that
 $A:=f(U_0)-\overline V$ is an annulus containing no critical value.
Set $V':=U_0\cap f^{-1}(V)$ and $A':=U_0\cap f^{-1}(A)$.
 Then $f:A'\to A$ is a degree $m$ covering.

 Let $\phi:V\to\mathbb D$ and $\phi':V'\to\mathbb D$ be Riemann
 maps with $\phi(\infty)=\phi'(\infty)=0$.
 Then $h=\phi\circ f\circ\phi'^{-1}:\mathbb D\to\mathbb D$ is a proper
 holomorphic map with $h(0)=0$.
 Thus $h$ is a Blaschke product
 $e^{i\theta}\prod_{k=1}^m\frac{z-a_k}{1-\overline{a_k}z}$,
 where $\{a_k\}=\phi'\circ f^{-1}(\infty)$.

 Set
 $$h_t(z)=e^{i\theta}\prod_{k=1}^m\frac{z-(1-t)a_k}{1-(1-t)\overline{a_k}z},$$
 for $0\le t\le1$.
 Define a continuous family of smooth branched coverings
 $g_t:\hat{\mathbb C}\to\hat{\mathbb C}$
 satisfying $g_t(z)=\phi^{-1}\circ h_t\circ \phi'$ for $z\in V'$,
 $g_t(z)=f_0(z)$ for $z\in \hat{\mathbb C}-U_0$, and
 that  $g_t:A'\to A$ is a degree $m$ covering map.

 Since for any point $z\in \hat{\mathbb C}$ its orbit passes through $A'$
 at most once, by Shishikura principle we obtain the required $f_t$.
\end{proof}

\begin{proof}[Proof of Theorem \ref{th7}]
  Let $f_1,f_2$ be rational maps satisfying the assumption of the theorem.
We construct a qc-conjugacy  $h$ between $f_1$ and $f_2$ in the sequel.
From this, we can construct a path of qc-equivalent rational maps which
 connects $f_1$ and $f_2$ (cf. Theorem 2.9 of \cite{McSu98}).

By Theorem \ref{th:8},
we can assume  that for $i=1,2$, there is a
topological disc $W_i$ containing $\infty$ with smooth boundary
 such that $A_i:=W_i'-\overline W_i$ includes all the critical values,
 where $W_i'$ is the
 connected component of $f_i^{-1}(W_i)$ containing $\infty$.

 To show the existence of $h$, it is sufficient to construct a diffeomorphism
 $m:\overline {A_1}\to\overline{A_2}$ such that
 there exists a lift $\tilde m:\overline{A_1'}\to\overline{A_2'}$
 with $f_2\circ \tilde m=m\circ f_1$ on $\overline{A_1'}$, where
 each $A_i',i=1,2$ is the connected component of $f_i^{-1}(A_i)$
 adjacent to $A_i$.
 Recall that $A_i'$ contains all the critical points.
If we have such a diffeomorphism $m$, then we define a
qc map $h$ as follows:
First define $h=f_2^k\circ m\circ f_1^{-k}$ on $f_1^k(A_1),k=1,2,\dots$.
Then we have $h:\overline{W_1}\to\overline{W_2}$.
Secondly extend $h$ on $f^{-k}(\overline{W_1}),k=1,2,\dots$
 to satisfy $f_2\circ h=h\circ f_1$.
This extension is guaranteed by the existence of $\tilde m$, and
  we have $h$ on the Fatou set.
Finally it is evident that $h$ can be continuously extended to
the Julia set by construction.

The proof is completed by Lemma \ref{lem11-2}.
\end{proof}

\begin{lemma}\label{lem11-1}
  Let $A\subset\mathbb C$ be a topological annulus with smooth boundary,
  $A'\subset\mathbb C$
  a $2d$-connected domain with smooth boundary,
   and $f:A'\to A$ a branched covering of degree $d$
   with $2d-2$ critical point $p_1,p_2,\dots p_{2d-2}\in A'$ of degree two.
We may consider the subscripts $i=1,2,\dots,2d-2$ to be elements of $\mathbb Z/(2d-2)$.
   Suppose $q_i=f(p_i),i=1,2,\dots,2d-2$ are distinct.
   Take $x\in A',y=f(x)\in A$.
   Then renumbering $p_i$'s if necessary, there exist simple paths $l_i\in A,i=1,2,\dots 2d-2$
   satisfying:
   \begin{enumerate}
     \item $l_i$ joins $y$ and $q_i$ ($i=1,2,\dots 2d-2$), and
      they are disjoint except at $y$,
    \item      $l_i,i=1,2,\dots 2d-2$
    are arranged counter-clockwise around $y$,
    \item    $p_i\in l_i'$, where $l_i'$ is the connected component  of
       $f^{-1}(l_i)$ containing $x$, and
\item each of $l_{2k-1}'\cup l_{2k}',k=1,2,\dots,d$ is a closed curve.
   \end{enumerate}
\end{lemma}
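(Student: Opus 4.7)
The plan is to translate the geometric conditions on the $l_i$ into combinatorial conditions on a tuple of monodromy transpositions in $\mathfrak{S}(f^{-1}(y))$, and then realize the required tuple using half Dehn twists, as already used in the proof of Lemma \ref{lem:10}.

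Since $A'$ is $2d$-connected and $f$ is a degree-$d$ covering unbranched over $\partial A$, the two monodromy permutations $\sigma_{\rm in}, \sigma_{\rm out} \in \mathfrak{S}(f^{-1}(y))$ around the two boundary circles of $A$ must together have $2d$ cycles on $f^{-1}(y)$; since each has at most $d$ cycles, both are the identity, and every boundary circle of $A'$ maps diffeomorphically onto a boundary circle of $A$.

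Fix any initial disjoint system $\tilde l_1,\dots,\tilde l_{2d-2}$ of simple paths from $y$ to the $q_i$, arranged counter-clockwise. Because $p_i$ has local degree two, the component of $f^{-1}(\tilde l_i)$ passing through $p_i$ is a simple arc with both endpoints in $f^{-1}(y)$, and the monodromy transposition $\tau_i \in \mathfrak{S}(f^{-1}(y))$ around $q_i$ swaps these two endpoints. Condition (3) is then equivalent to $\tau_i$ involving $x$ (so that the arc through $p_i$ has $x$ as an endpoint and is exactly $l'_i$). Condition (4) is equivalent to $\tau_{2k-1}=\tau_{2k}$ as transpositions (so that $l'_{2k-1}$ and $l'_{2k}$ share both $x$ and a common third point $x^{*}\in f^{-1}(y)$, and their union is the simple closed curve $x\to p_{2k-1}\to x^{*}\to p_{2k}\to x$). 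Triviality of the boundary monodromies gives $\tau_1\cdots\tau_{2d-2}=e$, and connectedness of $A'$ ensures that the $\tau_i$ act transitively on $f^{-1}(y)$.

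I would then perform half Dehn twists on the $\tilde l_i$ as in the proof of Lemma \ref{lem:10}; each twist realizes a Hurwitz (braid) move $(\tau_i,\tau_{i+1})\mapsto(\tau_i\tau_{i+1}\tau_i,\tau_i)$ on the tuple, preserving both its product and the subgroup it generates. The goal is to transform the tuple, possibly after renumbering the $p_i$, into the standard form
$$\bigl((x\,a_1),(x\,a_1),(x\,a_2),(x\,a_2),\dots,(x\,a_{d-1}),(x\,a_{d-1})\bigr),$$
where $\{a_1,\dots,a_{d-1}\}=f^{-1}(y)\setminus\{x\}$. Existence of such a braid sequence is the main obstacle. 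It follows from the classical Clebsch--Hurwitz theorem on connectedness of Hurwitz spaces of simple degree-$d$ branched covers of the sphere (to which our setting reduces after capping off the trivially covered boundary annuli): any two minimal transitive factorizations of the identity in $\mathfrak{S}(f^{-1}(y))$ by transpositions are Hurwitz-equivalent. A self-contained argument by induction on $d$ also works: first propagate a transposition involving $x$ through the tuple so that every $\tau_i$ involves $x$, then use the relation $\prod\tau_i=e$ to pair the resulting star transpositions into $d-1$ identical pairs.
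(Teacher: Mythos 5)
Your proposal is correct, but it takes a genuinely different route from the paper's. The paper works directly on the path system: in a first pass it greedily redraws each $l_i$ inside a face of $f^{-1}(L)$ adjacent to $x$ until $p_i\in l_i'$ holds for all $i$, and in a second pass it uses the fact that each face is mapped injectively to produce, for every $i$, a partner $j$ with $l_i'\cup l_j'$ closed, then rearranges into adjacent pairs. You instead encode the whole problem in the monodromy tuple $(\tau_1,\dots,\tau_{2d-2})$ of transpositions around the $q_i$: you correctly show that the boundary monodromies are trivial (by counting preimage boundary circles, so $\prod\tau_i=e$), that condition (3) is exactly ``$\tau_i$ moves $x$,'' that condition (4) is exactly ``$\tau_{2k-1}=\tau_{2k}$,'' and that half Dehn twists on the arc system realize the Hurwitz braid action; then you appeal to the Clebsch--Hurwitz connectedness theorem (after capping the boundary circles, $\chi(A')=2-2d$ so both ends cap to spheres, making the factorization minimal) to braid the tuple into the paired star form $((x\,a_1),(x\,a_1),\dots,(x\,a_{d-1}),(x\,a_{d-1}))$. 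The trade-off is clear: your argument is shorter and exposes the lemma as a special case of a classical result, whereas the paper's argument is self-contained and elementary but more laborious. One small caution about your alternative ``self-contained'' sketch: ``propagate a transposition involving $x$'' and ``pair the resulting star transpositions'' each hide a nontrivial Hurwitz-move argument (the propagation needs the connectivity of the transposition graph, and the pairing is not a pure counting step since the $b_i$ need not initially have equal multiplicities), so if you take that route you would need to flesh out both steps; the Clebsch citation, however, is watertight as used.
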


\begin{proof}[Proof of Lemma \ref{lem11-1}]
First we take a system of paths $\mathcal L=\{l_i:i=1,2,\dots,2d-2\}$ satisfying
1.
Set $L=\bigcup_{i=1}^{2d-2} l_i$.
Modifying it step by step, we obtain required paths.

Observe that $f$ is one-to-one  on each boundary component of $A'$.
We can show that if $\mathcal L=\{l_i\}$ satisfies 1, then $f^{-1}(L)$ is connected
and $f^{-1}(A-L)$ has exactly $d$ connected components.
Indeed, if $f$ is of degree $q$ on some connected component
$D\subset f^{-1}(A-L)$ and $p$ is the number of connected components of
$\partial D\cap f^{-1}(L)$,
then the Euler characteristic fulfills
$2-p-2q=\chi(D)=q\chi(A-L)=-q$, and so $q=1$ and $p=1$.

Moreover, observe that if there are distinct connected components $D,D'\in f^{-1}(A-L)$
 such that $\overline D\cap\overline{D'}$ includes a nontrivial curve
 (i.e. neither empty nor finite points), then
$\overline D\cap\overline{D'}$ includes a connected component of
$f^{-1}(l_i)$ for some $i$ which contains $p_i$.

In the following, we denote by $D,D'$  connected components of $f^{-1}(A-L)$.

Step 1.
Let $\mathcal L=\{l_i\}$ be a system of paths satisfying 1.
We modify $\mathcal L$ to satisfy 1 and 3.
Suppose that there exist  a $D$
with  $x\in \partial D$ and a critical point $p_i\in \partial D$ with
$p_i\notin l_i'$.
Then we take a simple path $\tilde l_i'\subset \overline D$ between $x$ and $p_i$,
 and replace $l_i$ with $f(\tilde l_i')$.
By  repeating this procedure, we obtain $\mathcal L$ such
that $p_i\in l_i'$ for any $p_i,D$ with $x,p_i\in\partial D$.
This means that for any $D$ with $x\in \partial D$ and for any $i$,
either $p_i\in\partial D\cap f^{-1}(l_i)=l_i'$ or $p_i\notin\partial D\cap f^{-1}(l_i)$.
Therefore any  $D$ satisfies $x\in\partial D$.
Indeed, if $x\in\partial D$ and $\partial D\cap\partial D'$ includes a
nontrivial  curve,
then there exists $p_i$ such that $p_i\in l_i'\subset\partial D\cap\partial D'$,
and so $x\in\partial D'$.
Hence  we have $\mathcal L$ with 1 and 3.

Step 2.
By renumbering, we have $\mathcal L$ satisfying 1, 2, and 3.
We modify $\mathcal L$ to satisfy 1, 2, 3, and 4.
We can show that for each $i$, there exists $j\ne i$ such that
$l_i'\cup l_j'$ is a closed curve.
Indeed, it is sufficient to show that for each $x'\in f^{-1}(y)-\{x\}$,
there exist two integers $i,j\in\{1,2,\dots,2d-2\}$ such that $l_i'$
and $l_j'$ joins $x$ and $x'$.
Since $f^{-1}(L)$ is connected, every $x'\in f^{-1}(y)-\{x\}$
has at least one $i$ such that $l_i'$ joins $x$ and $x'$.
If $l_i'$ is the only one which joins $x$ and $x'$, then there exists
a $D$ such that a neighborhood
of $p_i$ is included in the interior of $\overline{D}$, which contradicts
the injectivity of $f:D\to A-L$.

Let $i,j$ be such that $i-j\ne0,\pm1$ and such that
$l_i'\cup l_j'$ is a closed curve.
Then there exists a  $D$ such that
$l_i',l_j'\subset\partial D$.
We can take a simple path $\tilde l_j'$ in $\overline D$ joining $x$ and $p_j$
such that $f(\tilde l_j')$ is either between $l_i$ and $l_{i+1}$ or
between $l_{i-1}$ and $l_i$ in the cyclic order around $y$.
We replace $l_j$ with $f(\tilde l_j')$.
Then the new $l_i'\cup l_j'$ is also a closed curve by the fact
proved in the previous paragraph.
Renumber $\mathcal L$.
By  repeating this procedure, we obtain a required $\mathcal L$.

This completes the proof of Lemma \ref{lem11-1}.
\end{proof}

\begin{lemma}\label{lem11-2}
  Let $f_1:A_1'\to A_1$ and $f_2:A_2'\to A_2$ be branched coverings
satisfying the assumption of Lemma \ref{lem11-1}.
Then they are  isomorphic to each other, that is, there exist diffeomorphsims
$m:A_1\to A_2$ and $\tilde m:A_1'\to A_2'$ such that $f_2\circ \tilde m=m\circ f_1$.
\end{lemma}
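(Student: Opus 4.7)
The plan is to apply Lemma~\ref{lem11-1} to both $f_1$ and $f_2$, obtaining systems of paths $\mathcal L_k=\{l_i^{(k)}\}_{i=1}^{2d-2}$ in $A_k$ ($k=1,2$) satisfying conditions 1--4, and then to build the pair $(m,\tilde m)$ in two stages: first a diffeomorphism $m$ of the bases matching the decorated structures, then a lift $\tilde m$ assembled component by component and glued across $f_k^{-1}(L_k)$.

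For the base map, I would construct $m\colon A_1\to A_2$ as a diffeomorphism with $m(y_1)=y_2$, $m(q_i^{(1)})=q_i^{(2)}$, $m(l_i^{(1)})=l_i^{(2)}$, and $m$ sending each boundary component of $\partial A_1$ to one of $\partial A_2$. Both $(A_k,y_k,\{q_i^{(k)}\},\mathcal L_k)$ are smooth annuli carrying a wedge of $2d-2$ arcs enumerated in the same counterclockwise cyclic order about $y_k$ (property~2), so such an $m$ can be built first on $L_k\cup\partial A_k$ and then extended diffeomorphically across the pair of pants $A_k\setminus L_k$ by the standard classification of decorated surfaces.

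For the lift, I would use the structural information extracted in the proof of Lemma~\ref{lem11-1}: $f_k^{-1}(A_k\setminus L_k)$ has exactly $d$ connected components $D_1^{(k)},\dots,D_d^{(k)}$, and each $f_k|_{D_j^{(k)}}$ is a diffeomorphism onto $A_k\setminus L_k$. Any bijection $\sigma\colon\{1,\dots,d\}\to\{1,\dots,d\}$ yields a smooth candidate $\tilde m$ off $f_1^{-1}(L_1)$, defined on $D_j^{(1)}$ by $(f_2|_{D_{\sigma(j)}^{(2)}})^{-1}\circ m\circ f_1|_{D_j^{(1)}}$ and automatically satisfying $f_2\circ\tilde m=m\circ f_1$ there.

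The main obstacle is to choose $\sigma$ so that $\tilde m$ extends continuously, and then smoothly, across $f_k^{-1}(L_k)$. Property~4 of Lemma~\ref{lem11-1} is decisive: the pairing $\{p_{2j-1},p_{2j}\}$ records which two of the $D_i^{(k)}$'s meet along each closed curve $l_{2j-1}^{(k)\prime}\cup l_{2j}^{(k)\prime}$, and the cyclic order of the $D_i^{(k)}$'s around $x_k$ records the remaining identification data. Lemma~\ref{lem11-1} produces this combinatorial invariant identically for $k=1,2$, so I would choose $\sigma$ so as to preserve it; the candidate $\tilde m$ then agrees on overlaps. Continuity across each non-critical arc of $f_k^{-1}(L_k)$ is immediate from the matched combinatorics, and smoothness across each critical point $p_i^{(k)}$ follows from the fact that both $f_k$ are locally modelled on $z\mapsto z^2$ there with matching local data, possibly after a small smooth adjustment in a neighborhood of each $p_i^{(k)}$. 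The identity $f_2\circ\tilde m=m\circ f_1$ then holds on all of $A_1'$ by continuity.
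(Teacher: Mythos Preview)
Your approach is correct but differs from the paper's. The paper constructs the same base diffeomorphism $m$ carrying $\mathcal L_1$ to $\mathcal L_2$, but then obtains $\tilde m$ in one stroke from covering-space theory: it checks that the images of $(m\circ f_1)_*$ and $(f_2)_*$ in $\pi_1(A_2-Q_2,y_2)$ coincide (this being determined by the identical graph structures of $f_1^{-1}(L_1)$ and $f_2^{-1}(L_2)$, cf.\ Remark~\ref{rem4}(2) for the explicit generating set), and then invokes the classification of coverings to produce the lift on $A_1'\setminus f_1^{-1}(Q_1)$. Your route is the explicit cut-and-paste version of the same idea: you decompose $A_k'$ along $f_k^{-1}(L_k)$ into the $d$ sheets $D_j^{(k)}$, transport each sheet via inverse branches, and glue. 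What the paper gains is brevity and the avoidance of any local smoothing discussion; what you gain is concreteness, and you actually address the extension across the critical points (where both $f_k$ are locally $z\mapsto z^2$), which the paper leaves implicit. One refinement: your description of the matching invariant as ``the pairing $\{p_{2j-1},p_{2j}\}$ plus the cyclic order of the $D_i^{(k)}$ around $x_k$'' is a little loose, since there are $2d-2$ sectors at $x_k$ and further sectors at the other preimages of $y_k$; the clean statement is that properties 1--4 of Lemma~\ref{lem11-1} pin down the entire ribbon-graph structure of $f_k^{-1}(L_k)$ (the cyclic edge order at every preimage of $y_k$ is inherited from that at $y_k$ because $f_k$ is an orientation-preserving local diffeomorphism there), and this is what makes your choice of $\sigma$ and the subsequent gluing go through.
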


\begin{proof}[Proof of Lemma \ref{lem11-2}.]
  By Lemma \ref{lem11-1}, we have a system of paths $\mathcal L_1,\mathcal L_2$
  for $f_1,f_2$ respectively.
Take a diffeomorphism $m:A_1\to A_2$ which carries $\mathcal L_1$ to $\mathcal L_2$.
Then the image of $(m\circ f_1)_*:\pi_1(A_1'-f^{-1}(Q_1),x_1)\to\pi_1(A-Q_2,y_2)$
coincides with the image of $(f_2)_*:\pi_1(A_2'-f^{-1}(Q_2),x_2)\to\pi_1(A-Q_2,y_2)$,
where $Q_i,i=1,2$ are the sets of critical values for $f_i$, and
$x_i,y_i,i=1,2$ are the basepoints in $A_i'$ and $A_i$.
Indeed, the images are determined by
only the graph structures of $(m\circ f_1)^{-1}(L_2)
=f_1^{-1}(L_1)$ and $f_2^{-1}(L_2)$.

Now the existence of $\tilde m$ is assured by the theory of covering spaces.
\end{proof}

\begin{remark}\hspace{1mm}
\label{rem4}

  \begin{enumerate}
    \item The limitation ``renumbering $p_i$'s'' in Lemma \ref{lem11-1} can be deleted.
    In fact, any permutation $\tau:\{q_i\}\to\{q_i\}$ can be extended to
    a homeomorphism $m:A\to A$ with a homeomorphism
    $\tilde m:A'\to A'$ such that $m\circ f=f\circ\tilde m$.
    The isotopy class of $m$ relative to $\{q_i\}$ is not unique.
    \item
    Take a generating set $\{g_i:i=1,2,\dots,2d-2\}\cup\{g'\}$
    of $\pi_1(A-\{q_i\},y)$
    in Lemma \ref{lem11-1} as follows: $g_i=[\gamma_i],g'=[\gamma']$,
    where $\gamma_i$ is a loop
    with basepoint $y$ which is the boundary of a topological
    disc $U_i\subset A$ such that $\overline U_i\cap L=l_i$, and
    $\gamma'$ is a nontrivial loop of the annulus $A$.
Then the image of $f_*:\pi_1(A'-f^{-1}(\{q_i\}),x)\to\pi_1(A-\{q_i\},y)$
      is generated by
     $\{g_i^2,g_{2k-1}g_{2k},g_{2k-1}g_lg_{2k},g',g_{i}g'g_{i}\}$, where
     $i,k,l$ run over integers such that $1\le i\le 2d-2,1\le k\le d+1,$ and $l\ne 2k-1,2k.$
  \end{enumerate}
\end{remark}

By a similar argument to  Theorem \ref{th7}, we obtain the following:

\begin{theorem}
  \label{th7.1}
  Let $f,g$ be rational maps satisfying the assumption of Theorem \ref{th:8}
with a (super)attracting fixed point $\infty$  such that all the critical points
other than $\infty$ have the local degree two.
Suppose that there exists a qc-map
$\phi:B_f\to B_g$ with $\phi\circ f=g\circ \phi$, where
$B_f,B_g$ are neighborhoods of $\mathrm{Cr}_f\cup {P_f}\cup\{\infty\}, \mathrm{Cr}_g\cup {P_g}\cup\{\infty\}$ respectively.
Then we have:
\begin{enumerate}
  \item (Theorem \ref{th7} and Remark \ref{rem4}(1))
  If $\infty$ is attracting, then $\phi$ can be extended to
  a qc-conjugacy on the whole sphere.
  \item If $\infty$ is supperattracting, then $\phi$ can be extended to
  a qc-conjugacy on any bounded set.
\end{enumerate}
\end{theorem}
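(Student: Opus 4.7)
The plan is to imitate the proof of Theorem \ref{th7}, replacing the freely-constructed matching diffeomorphism $m$ of Lemma \ref{lem11-2} by the restriction of the prescribed $\phi$, so that the combinatorial data of Lemma \ref{lem11-1} is pinned down by hypothesis rather than chosen.

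First, I would apply Theorem \ref{th:8} to produce smooth discs $W_f \subset W_f'$ with $A_f := W_f' - \overline{W_f}$ carrying all critical values of $f$ other than $\infty$, and similarly $W_g \subset W_g'$ for $g$. By shrinking $W_f$ toward $\infty$ I may assume $\overline{A_f'} \subset B_f$, and likewise on the $g$ side. The flexibility of the choices in the proof of Theorem \ref{th:8} (through the $\gamma$-deformations) should allow $W_g, W_g'$ to be selected so that $\phi$ carries $(W_f, W_f', A_f, A_f')$ onto the $g$-counterparts. Then $m := \phi|_{\overline{A_f}}$ is qc, and $\tilde m := \phi|_{\overline{A_f'}}$ is automatically a qc-lift with $g \circ \tilde m = m \circ f$ by the conjugacy relation.

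With $m, \tilde m$ in hand, the rest follows Theorem \ref{th7} verbatim: define $h := g^k \circ m \circ f^{-k}$ on $f^k(A_f)$ for $k \ge 1$, producing a qc-map $\overline{W_f} \to \overline{W_g}$; extend to $\bigcup_k f^{-k}(\overline{W_f})$ by the equation $g \circ h = h \circ f$, valid thanks to $\tilde m$; and extend continuously to the Cantor Julia set. For part (1), this exhausts $\hat{\mathbb C}$. For part (2), any compact $K \subset \hat{\mathbb C} - \{\infty\}$ is contained in $f^{-k}(\overline{W_f})$ for some finite $k$, so $h|_K$ is a well-defined qc-map; but the construction does not extend across $\infty$ in general, since the $\delta$-to-$1$ structure at a superattracting fixed point imposes a local conjugacy (determined by B\"ottcher coordinates up to rotation) which the given $\phi$ need not respect outside of $B_f$.

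The main obstacle is the combinatorial matching in the second paragraph: showing that for any $W_f$ produced by Theorem \ref{th:8}, one can select a compatible $W_g$ so that the induced annular branched covers $f: A_f' \to A_f$ and $g: A_g' \to A_g$ are matched by $\phi$ up to isotopy rel the postcritical set. This is where Remark \ref{rem4}, quantifying the ambiguity in matching branched covers up to permutations of critical values, is essential.
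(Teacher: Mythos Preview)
Your outline tracks Theorem~\ref{th7} correctly, and your last paragraph correctly identifies the combinatorial matching as the real work. But the second paragraph contains a genuine gap: the inclusion $\overline{A_f'}\subset B_f$ cannot be arranged. The set $B_f$ is only assumed to be \emph{some} neighborhood of the countable set $\mathrm{Cr}_f\cup P_f\cup\{\infty\}$, and may well be a disjoint union of arbitrarily small discs about those points. The annulus $A_f$ produced by Theorem~\ref{th:8}, on the other hand, is a connected set that must contain every finite critical value of $f$ at its fixed location, and $A_f'$ is a connected $2d$-holed domain containing every finite critical point. ``Shrinking $W_f$ toward $\infty$'' does not shrink $A_f$ or $A_f'$ toward those marked points; it simply moves the critical values out of $A_f$ and destroys the structure Theorem~\ref{th:8} provides. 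So $m:=\phi|_{\overline{A_f}}$ is not even defined, and you cannot shortcut the construction of $m,\tilde m$.

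What the paper intends by ``a similar argument to Theorem~\ref{th7}'' is to keep the Lemma~\ref{lem11-1}/\ref{lem11-2} construction of $(m,\tilde m)$, but with the bijection on critical values (and hence on critical points) forced to agree with $\phi$. This is exactly why Remark~\ref{rem4}(1) is cited in the statement: it removes the ``renumbering $p_i$'s'' caveat of Lemma~\ref{lem11-1}, so that the isomorphism of branched covers $A'\to A$ can realize the \emph{prescribed} matching $q_i\mapsto\phi(q_i)$ rather than some convenient one. Once $(m,\tilde m)$ sends each $q_i$ to $\phi(q_i)$ and each $p_i$ to $\phi(p_i)$, you adjust $m$ by an isotopy rel $\{q_i\}$ so that it literally equals $\phi$ on the small discs $B_f\cap A_f$, and so that on a collar of $\partial W_f$ it matches the map dictated by $\phi$ near $\infty$; then the propagation $h=g^k\circ m\circ f^{-k}$ agrees with $\phi$ on $B_f$, and the rest of your argument (including the attracting/superattracting dichotomy for parts (1) and (2)) goes through.
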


We leave the following problem to the reader.

\medskip

\noindent
{\bfseries  Problem }
Relax the restriction of local degree in Theorem \ref{th7.1}.

\begin{remark}
Theorem \ref{th7.1} fails if the assumption of Theorem \ref{th:8} is
omitted.
Indeed, we give an example.
Let $f\in S_2$ be a rational map with attracting fixed point $\infty$
and two critical points $c_1,c_2$ such that $f^2(c_1)=f^2(c_2)$.
Then there  exists no homeomorphsim $\phi:\hat{\mathbb C}\to\hat{\mathbb C}$
such that $\phi\circ f=f\circ\phi$ and $\phi(c_1)=c_2$.
The detail is left to the reader.
\end{remark}

 \section{An example of a rational map which is Cantor but not strongly Cantor}\label{sec:gct}

In this section, we complete the proof of Theorem \ref{th4}.

\subsection{Basic facts}\label{sec6-1}

 \begin{defi}\label{defsemidirectp}
  Let $G,T$ be  groups with a homomorphism
  $\varphi:T\to\mathrm{Aut}G$.
  Then we have the semidirect product $G\rtimes_\varphi T$, namely,
  $G\rtimes_\varphi T=\{(g,\tau)\,|\,g\in G,\tau\in T\}$,
and for $(g,\tau),(g',\tau')\in G\rtimes_\varphi T$, the product
$(g'',\tau'')=(g,\tau)(g',\tau')$ is defined by
$g''=g\varphi(\tau)(g')$, $\tau''=\tau\tau'$.
We have an inclusion $i_1:G\to G\rtimes_\varphi T,
i_1(g)=(g,1)$ and a projection $p_2:G\rtimes_\varphi T
\to T,p_2(g,\tau)=\tau$, and an
exact sequence $1\to G\to G\rtimes_\varphi T\to T\to1$.

  For a set $X$, let $\mathfrak{S}(X)^{\mathrm{op}}$ be the opposite
  group of the symmetric group $\mathfrak{S}(X)$.
  Namely, $\mathfrak{S}(X)^{\mathrm{op}}$ consists of all bijective
  self-maps on $X$, and for $\tau,\tau'\in \mathfrak{S}(X)^{\mathrm{op}}$,
  we set $\tau\tau'=\tau'\circ\tau$.
  For a group $G$ and a set $X$, we have a homomorphism
  $\varphi=\varphi_{G,X}:\mathfrak{S}(X)^{\mathrm{op}}\to\mathrm{Aut} (G^X)$
  defined
  by $\varphi(\tau)(a)=a\circ \tau$.
We write $M(G,X)=G^X\rtimes_\varphi \mathfrak{S}(X)^{\mathrm{op}}$.

  For convenience of calculation, an element $(a,\tau)\in
M(G,X)$ is
expressed in the form of formal series
$$\sum_{x\in X} a(x)\cdot (x,\tau(x)).$$
Define the product of two terms of such a series by
$$[g\cdot(x,y)][g'\cdot(x',y')]=\left\{
\begin{array}{cc}
gg'\cdot(x,y')&\text{ if }y=x',\\
0&\text{ if }y\ne x',
\end{array}\right.$$
where $g,g'\in G,x,y,x',y'\in X$.
Then we recover the operation of $M(G,X)$ using distributive law.

For subgroups $Q\subset G^X$ and $T\subset\mathfrak{S}(X)$
such that $Q$ is $T$-invariant, the subgroup
$Q\rtimes T=\langle i_1(Q),p_2^{-1}(T)\rangle
\subset M(G,X)$ is well-defined.

\end{defi}

\begin{defi}\label{defi8}
 Let $G_0=\pi_1(\hat{\mathbb C}-P,\bar x)$ be the fundamental group, and
 $\hat G_0=\hat\pi_1(\hat{\mathbb C}-P,\bar x)=\pi_1(\hat{\mathbb C}-P,\bar x)
 /\ker\alpha_\infty$ the iterated monodromy group (see Definition
 \ref{defmono}).
   The monodromy  $\alpha=\alpha_f$ descend to the right action of
 $\hat G_0$ on $f^{-1}(\bar x)$, which
 is denoted by the same notation $\alpha$.
 Therefore
 $\alpha:\hat G_0\to\mathfrak{S}(f^{-1}(\bar x))^{\mathrm{op}}$
 is a homomorphism.
 Let $G_k=M(G_0,f^{-k}(\bar x))$ and $\hat G_k=M(\hat G_0,f^{-k}(\bar x))$,
 $k=1,2,\dots$.
%For $(g,\tau)\in G_k$, we say $g\in G_0^{f^{-k}(\bar x)}$ is the
%{\em loop part} of $(g,\tau)$.
This group and the homomorphism $f_r^*$ below are
 used by the author in \cite{Ka03},\cite{Ka01}.

 Let $r=(l_i)_i$ be a radial, and  $\chi_r:\mathtt{Word}(k)\to f^{-k}(\bar x)$
 the bijection determined by
 $w\mapsto l_w(1)$ using the notation  in Definition \ref{deftree}.
 We identify $f^{-k}(\bar x)$ with $\mathtt{Word}(k)$ by $\chi_r$, and
 also  $G_k$ with $M(G_0,\mathtt{Word}(k))$.
We consider $\alpha$ to be a right action of $G_0$ on
$\mathtt{Word}(1)=\{1,2,\dots,d\}$.
This action depends on $r$.

For a loop $\gamma:([0,1],\{0,1\})\to (\hat{\mathbb C}-P,\bar x)$,
let denote by $[\gamma]\in G_0$ the equivalence class including $\gamma$.
 We obtain (non-homomorphic) mappings $\beta_{r,i}:G_0\to G_0$,
  $i=1,2,\dots,d$
 defined by
 $\beta_{r,i}([\gamma])=[l_iL_i(\gamma)l_{\alpha([\gamma])(i)}^{-1}]$,
 where $L_i(\gamma)$ is the lift of $\gamma$
 defined in Definition \ref{deftree}.
 Note that $$\beta_{r,i}[\gamma\gamma']
 =[l_iL_i(\gamma\gamma')l_{j'}^{-1}]
 =[l_iL_i(\gamma)l_{j}^{-1}
 l_{j}L_j(\gamma')l_{j'}^{-1}]
 =\beta_{r,i}([\gamma])\beta_{r,j}([\gamma']),$$
 where $j=\alpha([\gamma])(i)$ and $j'=\alpha([\gamma\gamma'])(i)
 =\alpha([\gamma'])(j)$.
From this, we have the homomorphism $f_r^*:G_{k}\to G_{k+1}$  determined
by
$$f_r^*:\sum_{w\in \mathtt{Word}(k)} a(w)\cdot (w,\tau(w))\mapsto
\sum_{i=1}^d\sum_{w\in \mathtt{Word}(k)}
\beta_{r,i}(a(w))\cdot(iw,\alpha(a(w))(i)\tau(w)).$$
Particularly, for $k=0$,
$$f_r^*:g\mapsto
\sum_{i=1}^d
\beta_{r,i}(g)\cdot(i,\alpha(g)(i)).$$
   We write  $\beta_{r,w}=\beta_{r,i_1}\circ\beta_{r,i_2}
   \circ\cdots\circ\beta_{r,i_k}$ for $w=i_1i_2\cdots i_k\in \mathtt{Word}(k)$.
It is easy to see that
$\beta_{r,w}([\gamma])=[l_wL_w(\gamma)l_{\alpha_{f^k}([\gamma])(w)}^{-1}]$.
\end{defi}

\begin{prop}
 Let $r$ be a radial.
 The action $\alpha_{f^k}$ of $G_0$ on $\mathtt{Word}(k)\cong f^{-k}(\bar x)$
 is inductively expressed as
 $$\alpha_{f^k}(g)(i_1i_2\cdots i_k)=\alpha_f(\beta_{r,i_2\cdots i_k}(g))(i_1)
 \alpha_{f^{k-1}}(g)(i_2\cdots i_k)
 %=\alpha(\beta_{r,i_k}(g))(i_1\cdots i_{k-1})\alpha(g)(i_k)
 ,$$
 and
 %for a loop $\gamma:([0,1],\{0,1\})\to (\hat{\mathbb C}-P,\bar x)$,
 $$\overbrace{f_r^*\circ f_r^*\circ\cdots\circ f_r^*}^{k\text{ times}}
 (g)
     =\sum\limits_{w\in \mathtt{Word}(k)}
     \beta_{r,w}(g)
     \cdot(w,\alpha_{f^k}(g)(w)).$$
\end{prop}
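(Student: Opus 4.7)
The plan is to establish the two formulas by induction on $k$, with the recursion for $\alpha_{f^k}$ serving as the key lemma used in the induction step for $(f_r^*)^k$. The first formula will be proved by a direct analysis of path lifts using the decomposition $f^k=f\circ f^{k-1}$. Fix a representative loop $\gamma$ of $g$, write $v=i_2\cdots i_k$, and set $v'=\alpha_{f^{k-1}}(g)(v)$. By definition $\beta_{r,v}(g)=[l_v\,L_v(\gamma)\,l_{v'}^{-1}]$, where $L_v(\gamma)$ is the $f^{k-1}$-lift of $\gamma$ starting at $l_v(1)$. To compute $\alpha_f(\beta_{r,v}(g))(i_1)$, I lift this concatenated loop by $f$ starting at $l_{i_1}(1)$: the $l_v$-segment lifts to $L_{i_1}(l_v)$ ending at $l_{i_1v}(1)$; the $L_v(\gamma)$-segment lifts to a path $\delta$ satisfying $f\delta=L_v(\gamma)$, hence $f^k\delta=\gamma$, so $\delta=L_{i_1v}(\gamma)$ (the $f^k$-lift of $\gamma$ starting at $l_{i_1v}(1)$), whose endpoint is $\chi_r(\alpha_{f^k}(g)(i_1v))$. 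Writing $\alpha_{f^k}(g)(i_1v)=jw'$, the trailing $f$-lift of $l_{v'}^{-1}$ must end at $l_j(1)$, and this forces $w'=v'$ together with $j=\alpha_f(\beta_{r,v}(g))(i_1)$. That is exactly the claimed recursion.

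For the second formula I would argue by induction on $k$. The base case $k=1$ is the definition of $f_r^*$ on $G_0$. Assuming $(f_r^*)^k(g)=\sum_{w\in\mathtt{Word}(k)}\beta_{r,w}(g)\cdot(w,\alpha_{f^k}(g)(w))$, apply $f_r^*$ termwise using the formula for $f_r^*$ on $G_k$ given in Definition~\ref{defi8} to obtain
$$(f_r^*)^{k+1}(g)=\sum_{i=1}^d\sum_{w\in\mathtt{Word}(k)}\beta_{r,i}(\beta_{r,w}(g))\cdot\bigl(iw,\ \alpha_f(\beta_{r,w}(g))(i)\,\alpha_{f^k}(g)(w)\bigr).$$
The identity $\beta_{r,i}\circ\beta_{r,w}=\beta_{r,iw}$ is immediate from the definition of $\beta_{r,\cdot}$, and the recursion from the first step rewrites the word in the second slot as $\alpha_{f^{k+1}}(g)(iw)$. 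Reindexing the double sum by $w'=iw\in\mathtt{Word}(k+1)$ yields exactly the desired expression at level $k+1$, closing the induction.

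The only subtle point is the bookkeeping in the first step: one must track how the three segments of the loop $l_v\,L_v(\gamma)\,l_{v'}^{-1}$ lift simultaneously by $f$ and confirm that the trailing $l_{v'}^{-1}$ segment pins down both the first letter and the tail of $\alpha_{f^k}(g)(i_1v)$. Once this is carried out carefully, no further analytic input is needed; the whole proposition reduces to the functoriality of path lifting under $f^k=f\circ f^{k-1}$ together with the definitions of $\beta_{r,w}$ and $f_r^*$.
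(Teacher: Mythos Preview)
Your proposal is correct and follows essentially the same approach as the paper. The paper also reduces the second formula to the first via the definition of $f_r^*$ (leaving the induction implicit), and proves the recursion for $\alpha_{f^k}$ by the same path-lifting computation: it sets $\alpha_{f^{k+1}}([\gamma])(iw)=ju$, applies $f$ to read off $u=\alpha_{f^k}([\gamma])(w)$, and identifies $j$ by tracking the endpoint of $L_i(l_w L_w(\gamma) l_u^{-1})$, which is exactly your three-segment lift of the loop representing $\beta_{r,w}([\gamma])$.
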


\begin{proof}
By the definition of $f_r^*$, it is sufficient to show
$$\alpha_{f^{k+1}}([\gamma])(iw)=\alpha(\beta_{r,w}([\gamma]))(i)
\alpha_{f^k}([\gamma])(w)$$ for $i\in \mathtt{Word}(1)$ and
 $w\in \mathtt{Word}(k)$.
Suppose $\alpha_{f^{k+1}}([\gamma])(iw)=ju,j\in \mathtt{Word}(1),
u\in \mathtt{Word}(k)$.
Then $L_{iw}(\gamma)(0)=l_{iw}(1)$ and  $L_{iw}(\gamma)(1)=l_{ju}(1)$.
Composing $f$, we have
$L_{w}(\gamma)(0)=l_{w}(1)$ and  $L_{w}(\gamma)(1)=l_{u}(1)$.
Thus $\alpha_{f^k}([\gamma])(w)=u$.
On the other hand, $L_{iw}(\gamma)(1)=l_{ju}(1)$ implies that
the endpoint of $L_i(l_wL_w(\gamma)l_u^{-1})$ is $l_j(1)$.
Thus $\alpha(\beta_{r,w}([\gamma]))(i)=j$.
\end{proof}

It is easily seen that $\beta_{r,i}:\hat G_0\to\hat G_0$ is well-defined and
the homomorphism $f_r^*:G_k\to G_{k+1}$ descends to
$f_r^*:\hat G_k\to \hat G_{k+1}$ (we use the same symbol) as the diagram
$$
%  \label{eq:commutative-diagram}
  \raisebox{-0.5\height}{\includegraphics[trim=0 745 360 40,clip]{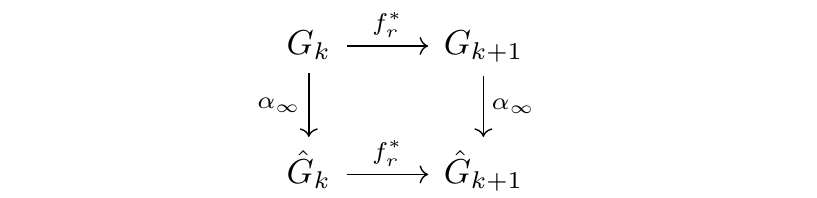}}
$$
%$$
%\begin{minipage}{10cm}
%$$\begin{tikzcd}
%G_k \arrow[r,"f_r^*"]\arrow[d,"\alpha_\infty"'] & G_{k+1}\arrow[d,"\alpha_\infty"]\\
%\hat G_k \arrow[r,"f_r^*"] &\hat G_{k+1}
%\end{tikzcd}
%$$
%\end{minipage}
%$$
commutes.
 Let $r'=(l_i')$ be a radial other than $r$ with the same numbering of
 $f^{-1}(\bar x)$, i.e. $l_i(1)=l_i'(1)$.
Let $h_i=[l_i'l_i^{-1}]\in G_0$.
Then $\beta_{r',i}([\gamma])=h_i\beta_{r,i}([\gamma])
h_{\alpha([\gamma])(i)}^{-1}$.
Setting $h=((h_1,h_2,\dots,h_d),\mathrm{id})=\sum_{i=1}^d h_i\cdot(i,i)\in G_1$,
 we can describe $f_{r'}^*:G_0\to G_1$ as
 $$f_{r'}^*:g\mapsto hf_{r}^*(g)h^{-1}.$$
 However, since $\chi_r\ne \chi_{r'}$ on $\mathtt{Word}(k),k\ge2$,
  the relation between $f_r^*$ and $f_{r'}^*$ on $G_k,k\ge 1$ is
  complicated.

\begin{remark}
 Let $\gamma:([0,1],\{0,1\})\to (\hat{\mathbb C}-P,\bar x)$ be  a loop.
 There is a one-to-one correspondence between
 the lifted loops of $\gamma$ and the periodic cycles of $\alpha([\gamma])$.
 Namely, $(i_1,i_2,\dots,i_m)$ is a periodic cycle of $\alpha([\gamma])$,
 if and only if
 $\tilde \gamma=L_{i_1}(\gamma)L_{i_2}(\gamma)\cdots L_{i_m}(\gamma)$
 is a loop and $f\circ \tilde\gamma=\gamma^m$.
\end{remark}

Let us denote by $e$ the identity element of  $\hat G_n$.
\begin{prop}
The homomorphism $f_r^*:\hat G_{k}\to \hat G_{k+1}$ is injective.
\end{prop}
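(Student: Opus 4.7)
The plan is to first reduce injectivity of $f_r^*:\hat G_k\to\hat G_{k+1}$ to the base case $k=0$, and then prove the base case by showing that having $f_r^*(g)=e$ forces $g$ to lie in $\ker\alpha_\infty$ after lifting it to $G_0$.

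For the base case $k=0$, I would pick a loop $\gamma$ representing $g\in\hat G_0$ with $f_r^*(g)=e$ in $\hat G_1$. The formula for $f_r^*$ at level zero then immediately gives two conditions: the permutation part $\alpha(g)$ is trivial (so $g\in\ker\alpha_f$, meaning every $L_i(\gamma)$ is a loop at $l_i(1)$), and for each $i$ one has $\beta_{r,i}(g)=e$ in $\hat G_0$, i.e.\ the based loop $l_iL_i(\gamma)l_i^{-1}$ lies in $\ker\alpha_\infty$ as an element of $G_0$. My goal is to deduce that $g\in\bigcap_n\ker\alpha_{f^n}=\ker\alpha_\infty$, hence is trivial in $\hat G_0$. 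I would argue by induction on $n$: the case $n=1$ is given, and for $n\ge 2$ I claim that if every $\beta_{r,i}(g)\in\ker\alpha_{f^{n-1}}$ then $g\in\ker\alpha_{f^n}$.

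To prove this claim I use the concatenation structure of lifts. Let $y\in f^{-n}(\bar x)$ and let $i$ be such that $f^{n-1}(y)=l_i(1)$; then a lift of $\gamma$ by $f^n$ starting at $y$ is precisely a lift of $L_i(\gamma)$ by $f^{n-1}$. Now lift $l_iL_i(\gamma)l_i^{-1}$ by $f^{n-1}$ starting at the unique $x\in f^{-(n-1)}(\bar x)$ for which the lift of $l_i$ at $x$ ends at $y$. By hypothesis this full lift is closed. Since the terminal segment is a lift of $l_i^{-1}$ ending at $x$, uniqueness of path lifting forces its starting point to be $y$ as well; consequently the middle piece, the lift of $L_i(\gamma)$ starting at $y$, must end at $y$ and so be closed. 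As $x$ ranges over $f^{-(n-1)}(\bar x)$ and $i$ over $\{1,\dots,d\}$, the point $y$ sweeps out all of $f^{-n}(\bar x)$, giving $g\in\ker\alpha_{f^n}$ as desired.

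For general $k$, an element $(a,\tau)\in\hat G_k$ with $f_r^*(a,\tau)=e$ must, by inspection of the defining formula
$$f_r^*(a,\tau)=\sum_{i,w}\beta_{r,i}(a(w))\cdot(iw,\alpha(a(w))(i)\tau(w))=\sum_u 1\cdot(u,u),$$
satisfy $\tau(w)=w$, $\alpha(a(w))(i)=i$, and $\beta_{r,i}(a(w))=e$ in $\hat G_0$ for all $i,w$. Applying the base case to each $a(w)$ yields $a(w)=e$, so $(a,\tau)$ is trivial. The main obstacle is the lift-reconstruction argument in the base case; once the uniqueness of path lifting is invoked carefully the rest is formal, and the reduction from level $k$ to level $0$ is bookkeeping in the semidirect-product notation introduced in Definition \ref{defsemidirectp}.
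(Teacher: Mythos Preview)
Your argument is correct. The overall structure matches the paper's proof: read off from the defining formula that $\tau=\mathrm{id}$, $\alpha(a(w))=\mathrm{id}$, and $\beta_{r,i}(a(w))=e$ in $\hat G_0$ for every $i$ and $w$, and then deduce that each $a(w)$ is trivial in $\hat G_0$.

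The one place you and the paper diverge is in that last deduction. The paper exploits the machinery already set up: since $\beta_{r,j}:\hat G_0\to\hat G_0$ is well defined and sends $e$ to $e$, from $\beta_{r,i}(a(w))=e$ one iterates to obtain $\beta_{r,u}(a(w))=e$ for every word $u$, and then the recursive description $\alpha_{f^k}(g)(i_1\cdots i_k)=\alpha_f(\beta_{r,i_2\cdots i_k}(g))(i_1)\,\alpha_{f^{k-1}}(g)(i_2\cdots i_k)$ from the preceding proposition immediately gives $\alpha_{f^k}(a(w))=\mathrm{id}$ for all $k$. Your approach instead re-derives this step geometrically: you show by an explicit path-lifting and uniqueness argument that $\beta_{r,i}(g)\in\ker\alpha_{f^{n-1}}$ together with $\alpha_f(g)=\mathrm{id}$ forces $g\in\ker\alpha_{f^n}$. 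That lifting argument is exactly what underlies the recursive formula, so the two proofs are really the same idea viewed algebraically versus topologically; the paper's version is terser because it cashes in on results it has already stated, while yours is self-contained.

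Two minor remarks on exposition. First, your ``induction on $n$'' is not really an induction: the hypothesis of your claim at step $n$ is $\beta_{r,i}(g)\in\ker\alpha_{f^{n-1}}$, which you already know for every $n$ from $\beta_{r,i}(g)\in\ker\alpha_\infty$, not from the case $n-1$ for $g$. It would be cleaner to state the claim once and apply it for each $n$. Second, when you invoke lifting of $l_i$ and $L_i(\gamma)$ by $f^{n-1}$, you are implicitly using $f^{-1}(P)\supset P$, so that $l_iL_i(\gamma)l_i^{-1}$ lies in $\hat{\mathbb C}-P$ and hence avoids the critical values of $f^{n-1}$; this is true, but worth a word.
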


\begin{proof}
Suppose  $\sum_{i=1}^d\sum_{w\in \mathtt{Word}(k)}
 \beta_{r,i}(a(w))\cdot(iw,\alpha(a(w))(i)\tau(w))=e$.
 Then $\tau=\mathrm{id}$, and $\beta_{r,i}(a(w))=e$, $\alpha(a(w))=\mathrm{id}$
 for any $w\in \mathtt{Word}(k)$ and any $i\in \mathtt{Word}(1)$.
 Thus we have $\beta_{r,u}(a(w))=e$ for any
 $u\in \mathtt{Word}$ and $w\in \mathtt{Word}(k)$, and so $\alpha_\infty(a(w))=1$.
\end{proof}

\begin{lemma}\label{lemma4}
Let $f$ be a Cantor hyperbolic rational map of
degree $d$,
and $r=(l_i)_i$ a radial for $f$.
Then the coding map $\phi_r:\Sigma_d\to J$ is one-to-one
if and only if for any $g\in \hat G_0$, there exists $k>0$
such that $(f_r^*)^k(g)\in \{e\}\rtimes \mathfrak{S}(\mathtt{Word}(k))$.
\end{lemma}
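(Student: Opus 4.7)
The plan is to first translate the condition into a geometric statement about specific loops, and then argue the two implications separately. Expanding $(f_r^*)^k(g)=\sum_{w\in\mathtt{Word}(k)}\beta_{r,w}(g)\cdot(w,\alpha_{f^k}(g)(w))$, the condition $(f_r^*)^k(g)\in\{e\}\rtimes\mathfrak{S}(\mathtt{Word}(k))$ is equivalent to $\beta_{r,w}(g)=e$ in $\hat G_0$ for every $w\in\mathtt{Word}(k)$; equivalently, fixing a representative loop $\gamma$ of $g$ and writing $\sigma=\alpha_{f^k}(g)$, each loop $\eta_w:=l_wL_w(\gamma)l_{\sigma(w)}^{-1}$ acts trivially on every level of the preimage tree. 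A direct check using the composition behaviour of the $\beta_{r,\cdot}$'s shows that once this holds at some level $k$, it persists at every deeper level.

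For the direction $(\Leftarrow)$ I argue by contraposition. Suppose $\phi_r(\omega)=\phi_r(\omega')=:z$ for distinct $\omega,\omega'\in\Sigma_d$. Since $f$ is hyperbolic Cantor, Proposition \ref{propgeof} together with hyperbolicity gives $J\cap P=\emptyset$, so $z\notin P$ and $\gamma:=l_\omega\bar l_{\omega'}$ is a bona fide loop in $\hat{\mathbb C}-P$ based at $\bar x$, defining $g=[\gamma]\in\hat G_0$. For each $k$, the lift of $\gamma$ by $f^k$ starting at $l_{\omega|_k}(1)$ traverses $L_{\omega|_k}(l_\omega)$ out to the preimage $\phi_r(\omega|_k\omega)\in f^{-k}(z)$, then a lift of $\bar l_{\omega'}$ back to the unique $l_v(1)$ with $\phi_r(v\omega')=\phi_r(\omega|_k\omega)$. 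The non-injectivity of $\phi_r$ is inherited by the pair $(\omega|_k\omega,\omega|_k\omega')$; iterating the construction at this deeper level exhibits nontrivial iterated monodromy of $\eta_{\omega|_k}$, so $\beta_{r,\omega|_k}(g)\ne e$ in $\hat G_0$ for every $k$, and the group condition fails.

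For the direction $(\Rightarrow)$, assume $\phi_r$ is a homeomorphism and fix $g\in\hat G_0$ with a representative loop $\gamma\subset\Omega-P$ inside an expanding domain $\Omega$ (Lemma \ref{lemmetric}). Uniform Poincar\'e contraction of the inverse branches of $f^k$, via the IFS $(\{g_i\},\hat S)$ constructed in Lemma \ref{lemmetric}, gives $\mathrm{diam}(L_w(\gamma))\to 0$ uniformly in $w\in\mathtt{Word}(k)$ as $k\to\infty$. By injectivity of $\phi_r$, each $l_w(1)$ accumulates on the clopen piece $\phi_r(w\Sigma_d)\subset J$ in a manner compatible with the tree structure, while $P\subset F$ is uniformly separated from $J$. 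For $k$ sufficiently large, the image of $\eta_w$ lies in a neighbourhood of $l_w\cup l_{\sigma(w)}$ that is disjoint from $P$ and compatible with the tree partition, forcing every iterated lift of $\eta_w$ to be a closed loop, i.e.\ $\beta_{r,w}(g)=e$ in $\hat G_0$.

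The main obstacle is the forward direction: the geometric claim that ``short lifts near $J$ contribute no new iterated monodromy'' must be made precise by matching the hyperbolic contraction rate with the separation of clopen pieces of $J$, and by handling the case $\sigma(w)\ne w$, where one must exhibit an explicit homotopy between $l_w\cdot L_w(\gamma)$ and $l_{\sigma(w)}$ in $\hat{\mathbb C}-P$ that is compatible with every iterated lift. A clean implementation will likely reduce to loops $\gamma$ whose level-$k$ lifts all sit in a common neighbourhood of $J$, and then appeal to local simple connectivity around $J$ coupled with the combinatorial identification $\Sigma_d\cong J$ furnished by $\phi_r$.
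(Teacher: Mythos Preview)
Your translation of the group condition in the first paragraph is correct, and the persistence of the condition under deeper levels is fine. The difficulties lie in both implications, and the core missing idea is the same in each case: you never use that the bounded iterated monodromy group $\hat G_\Omega=\pi_1(\Omega-P,\bar x)/\ker\alpha_\infty$ is \emph{finite} (Theorem~\ref{th1}). The paper's proof routes both directions through this finiteness via an auxiliary fixed-point condition: ``there is no nontrivial $g\in\hat G_0$ with $\beta_{r,w}(g)=g$ for some $w$''. Finiteness is what turns the problem into pigeonhole combinatorics rather than delicate geometry.

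Your $(\Leftarrow)$ argument is circular. Given $\omega\ne\omega'$ with $\phi_r(\omega)=\phi_r(\omega')$, you set $g=[l_\omega l_{\omega'}^{-1}]$ and claim $\beta_{r,\omega|_k}(g)\ne e$ for all $k$ because ``iterating the construction at this deeper level exhibits nontrivial iterated monodromy''. But $\beta_{r,\omega|_k}(g)$ is again of the form $[l_{\omega''}l_{\omega'''}^{-1}]$ for some colliding pair, so you are invoking the very statement you are trying to prove, with no base case. Worse, the pair you write as $(\omega|_k\omega,\omega|_k\omega')$ is generally \emph{not} in $Q$: prepending the same word to both coordinates preserves the collision only when $\alpha([\gamma_q])=\mathrm{id}$. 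Most importantly, even if $g\ne e$ in $\hat G_0$, it can still happen that $\beta_{r,w}(g)=e$ for all $|w|=k$ while $\alpha_{f^k}(g)\ne\mathrm{id}$; in that case $(f_r^*)^k(g)\in\{e\}\rtimes\mathfrak S(\mathtt{Word}(k))$ and the group condition holds for this $g$. The paper avoids this by first producing some $q$ with $[\gamma_q]\ne e$ (this exists, via the argument that $[\gamma_q]=e$ for all $q$ forces $Q$ to be diagonal), then using that $[\gamma_{\sigma^k(q)}]\ne e$ for all $k$, and finally applying finiteness of $\hat G_\Omega$ to find $k>k'$ with $[\gamma_{\sigma^k(q)}]=[\gamma_{\sigma^{k'}(q)}]$, yielding a nontrivial fixed point of some $\beta_{r,w}$.

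Your $(\Rightarrow)$ sketch has the right flavour, but as you yourself note, the obstacle is real: the loop $\eta_w=l_wL_w(\gamma)l_{\sigma(w)}^{-1}$ has long tails $l_w,l_{\sigma(w)}$, so ``$\mathrm{diam}\,L_w(\gamma)\to 0$'' does not by itself make $\eta_w$ small or trivial in $\hat G_0$. The paper again bypasses this geometry: from a nontrivial $\beta_{r,w}$-fixed $g$ one gets $\phi_r(ww\cdots)=\phi_r(w'w'\cdots)$ with $w'\ne w$, contradicting injectivity; conversely, finiteness of $\hat G_\Omega$ converts ``no fixed point'' into the eventual triviality of $\bigcup_{|w|=k}\beta_{r,w}(\hat G_\Omega)$, which is exactly the group condition.
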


\begin{proof}
 Let $\Omega$ be an expanding domain including $r$, and
  set $\hat G_\Omega=\pi_1(\Omega-P,\bar x)/\ker \alpha_\infty\subset \hat G_0$.
Recall that $\hat G_\Omega$ is a finite group (Theorem \ref{th1}).
  We have
   $\beta_{r,w}(\hat G_\Omega)\subset \hat G_\Omega$ for any $w\in \mathtt{Word}$;
   for any $g\in \hat G_0$, there exists $w\in \mathtt{Word}$
    such that $\beta_{r,w}(g)\in \hat G_\Omega$.
%  Note that $\bigcup_\Omega G_\Omega=G_0$.

 Let $Q=\{(\omega,\omega')\in \Sigma_d\times\Sigma_d:\phi_r(\omega)=\phi_r(\omega')\}$.
 For $q=(\omega,\omega')\in Q$, let us  denote
 $\sigma(q)=(\sigma(\omega),\sigma(\omega'))$,
 $\phi(q)=\phi_r(\omega)=\phi_r(\omega')$,
 and $[\gamma_q]=[l_\omega l_{\omega'}^{-1}]\in\hat G_\Omega$,
 where $\sigma:\Sigma_d\to\Sigma_d$
 is the shift map.
 Note that $l_w\subset\Omega$ for any $w\in\mathtt{Word}$.
 Thus $[\gamma_q]\in \hat G_\Omega$ for any $q\in Q$.

We show $(i\omega,j\omega)\notin Q$ if $i\ne j$.
 Indeed, since the Julia set $J$ contains no critical point, $f^{-1}(z)$ consists of
 exactly $d$ points for $z\in J$.
Hence for $z=\phi_r(\omega)$, the two sets of lifts of $l_\omega$ coincide:
$$\{L_i(l_\omega):i\in\mathtt{Word}(1)\}
=\{L'_y(l_\omega):y\in f^{-1}(z)\},$$
where by $L_y'(l)$ we denotes the lift of $l$ by $f$ such that
 $y$ is one of the endpoints.
This means that $\phi_r(i\omega),i\in\mathtt{Word}(1)$ are all distinct.

For $q=(\omega,\omega')\in Q$, we have
 $\sigma(q)\in Q$ and $(i\omega,j\omega')\in Q$ for every
 $i\in\mathtt{Word}(1)$ and $j=\alpha([\gamma_q])(i)$.
Clearly, $\beta_{r,i}([\gamma_q])=[\gamma_{(i\omega,j\omega')}].$
From this fact, if $q=(\omega,\omega')\in Q$ and $[\gamma_q]=e$, then
$wq=(w\omega,w\omega')\in Q$ and $[\gamma_{wq}]=e$ for any $w\in\mathtt{Word}$.
Therefore if $[\gamma_q]\ne e$, then $[\gamma_{\sigma^k(q)}]\ne e$ for any $k>0$.

We show that the following are equivalent:
\begin{enumerate}
  \item\label{9-1} $\phi_r:\Sigma_d\to J$ is one-to-one.
  \item\label{9-2} $\{[\gamma_q]\in\hat G_\Omega:q\in Q\}=\{e\}$.
  \item\label{9-3}
  $\bigcap_{k>0}\bigcup_{w\in\mathtt{Word}(k)}\beta_{r,w}(\hat G_\Omega)=\{e\}$.
  \item\label{9-4}   For any $g\in \hat G_0$, there exists $k>0$
  such that $(f_r^*)^k(g)\in  \{e\}\rtimes \mathfrak{S}(\mathtt{Word}(k))$.
  \item\label{9-5} There does not exist a non-trivial $g\in \hat G_0$ such that
  $\beta_{r,w}(g)=g$ for some $w\in \mathtt{Word}$.
\end{enumerate}

\ref{9-4} $\iff$ \ref{9-3} $\Rightarrow$ \ref{9-5}  is easy.

\ref{9-5} $\Rightarrow$ \ref{9-3} is obtained by the finiteness of
$G_\Omega$.

\ref{9-2} $\Rightarrow$ \ref{9-1}.
Suppose $\{[\gamma_q]\in\hat G_\Omega:q\in Q\}=\{e\}$.
First we show that if $q=(i\omega,j\omega')\in Q$, then $i=j$.
Indeed, by the fact mentioned above, $\sigma(q)=(\omega,\omega')\in Q$
and $(i\omega,i\omega')\in Q$ since $i=\alpha([\gamma_{\sigma(q)}])(i)$.
Hence $(j\omega',i\omega')\in Q$, and so  $i=j$.

Applying this observation to $\sigma^k(q),k=1,2,\dots$,
we have that $Q$ is the diagonal set, and
 $\phi_r$ is one-to-one.

 \ref{9-5} $\Rightarrow$ \ref{9-2}.
 Suppose $[\gamma_q]\ne e$ for some $q\in Q$.
 Then $[\gamma_{\sigma^k(q)}]\ne e$ for $k\ge0$.
 There exist  $k,k'\in\mathbb N$ such that $k>k'$ and
 $[\gamma_{\sigma^k(q)}]=[\gamma_{\sigma^{k'}(q)}]$,
 since $\hat G_\Omega$ is a finite group.
Hence $g=[\gamma_{\sigma^{k'}(q)}]\ne e$ satisfies
$\beta_{r,w}(g)=g$ for some $w\in\mathtt{Word}(k-k')$.

\ref{9-1} $\Rightarrow$ \ref{9-5}.
 Suppose that there exists a non-trivial
 $g\in \hat G_0$ such that $\beta_{r,w}(g)=g$ for some
 $w\in\mathtt{Word}(k)$.
Set $w':=\alpha_{f^k}(g)(w)$.
Take a loop $\gamma$ such that $g=[\gamma]$.
We know that the length of $L_{w^n}(\gamma)$ tends to zero as $n\to\infty$,
 and that $[l_{w^n}L_{w^n}(\gamma)l_{{w'}^n}^{-1}]=g$ is a non-trivial loop.
Hence $w'\ne w$, and $(ww\cdots,w'w'\cdots)\in Q$.
\end{proof}

\begin{remark}
  In Lemma \ref{lemma4}, the assumption that $f$ is Cantor is not necessary.
  In fact, we have used only the finiteness of $\hat G_\Omega$.
   Instead, we may use the finiteness of
  $\hat G_{\Omega,L}=\{[\gamma]\in \hat G_\Omega: |\gamma|<L\}$ for $L>0$,
   where $|\gamma|$ is the length  of a path.
   Moreover, for a general hyperbolic rational map $f$,
   we can show $\{[\gamma_q]:q\in Q\}
   =\bigcap_{k>0}\bigcup_{w\in\mathtt{Word}(k)}\beta_{r,w}(\hat G_{\Omega,L})$
   for some $L>0$.

For another version of this lemma, see Section 7 of \cite{Ka06}.
\end{remark}

\subsection{Proof}

Let $$f(z)=\frac{az^4-2az^2+a+\dfrac1{4a}}{z^2-1}=a(z^2-1)+\frac1{4a(z^2-1)}.$$
Then $\mathtt{Cr}=\{0,\infty,\pm\sqrt{1+1/{2a}},\pm\sqrt{1-1/{2a}}\}.$
$\pm\sqrt{1+1/{2a}}\mapsto1\mapsto\infty\mapsto\infty,
\pm\sqrt{1-1/{2a}}\mapsto-1\mapsto\infty$.

\begin{lemma}
 If $|a|$ is large enough, then $f$ is d-Cantor and $f^2$ is s-Cantor.
\end{lemma}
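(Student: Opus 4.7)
The plan is to address the two parts of the lemma separately.

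For $f$ being d-Cantor, I would verify Cond C and invoke Theorem \ref{th1}. First I observe that $\infty$ is a super-attracting fixed point of $f$ of local degree two (since $f(z)=az^2+O(1)$ at $\infty$), and the finite critical points are $0$ and $\pm\sqrt{1\pm 1/(2a)}$. A direct substitution gives $\pm\sqrt{1\pm 1/(2a)}\mapsto \pm 1\mapsto\infty$, so these four are attracted to $\infty$ in two iterations. For $0$, one computes $f(0)=-a-1/(4a)$, and $|f^k(0)|$ grows roughly like $|a|^{2^k}$ when $|a|$ is large, so $0$ too is attracted to $\infty$. Therefore $f$ is hyperbolic and every critical point lies in $A(\infty)$; since the Fatou set of a Cantor map is connected, the immediate basin $A_0(\infty)$ equals the whole Fatou set, and by the equivalence (\ref{ab})$\Leftrightarrow$(\ref{dc}) in Theorem \ref{th1}, $f$ is d-Cantor.

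For $f^2$ being s-Cantor, I would exhibit a closed topological disc $\overline D\subset\hat{\mathbb C}$ disjoint from the critical values $\{\pm 1,\infty,-a-1/(4a),f(-a-1/(4a))\}$ of $f^2$ and satisfying $f^{-2}(\overline D)\subset\overline D$. The plan is to construct its open complement as
\[
U\;:=\;\hat{\mathbb C}\setminus\overline D\;=\;V_\infty\cup N_+\cup N_-\cup T_+\cup T_-,
\]
where $V_\infty:=\{|z|>2\}$, the $N_\pm$ are sufficiently small discs around $\pm 1$, and $T_\pm$ are thin tubes joining $V_\infty$ to $N_\pm$ along a direction adapted to $\arg a$. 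This $U$ is a tree of discs, hence a topological disc; and for $|a|$ sufficiently large the set $V_\infty$ already contains $\infty,-a-1/(4a),f(-a-1/(4a))$, so $U$ captures every critical value of $f^2$.

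The main work is to verify $f^2(U)\subset U$. On $V_\infty$ the elementary bound $|f(z)|\ge|a|(|z|^2-1)-(4|a|(|z|^2-1))^{-1}\ge 2|a|$ for $|z|>2$ gives $f(V_\infty)\subset V_\infty$ and iteratively $f^2(V_\infty)\subset V_\infty$. On $N_\pm$, the simple pole of $f$ at $\pm 1$ forces $|f(z)|$ above any prescribed threshold once the radius of $N_\pm$ is taken small enough, so $f(N_\pm)\subset V_\infty$ and hence $f^2(N_\pm)\subset V_\infty\subset U$. The delicate step is on the tubes. Writing $z=1+te^{i\psi}$ and setting $\eta=2ate^{i\psi}$, one has the leading-order expansion $f(z)\sim\eta+1/(4\eta)$; as $t$ varies the image $f(T_+)$ traces approximately the curve $\{\eta+1/(4\eta):\eta\text{ on a ray from }0\}$, which dips in modulus to $O(1)$ at $|\eta|\sim 1/2$. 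The main obstacle is that this image could pass through one of the four zeros of $f$ (sitting at $\pm 1\pm i/(4a)+O(|a|^{-2})$), in which case $f^2(T_+)$ would contain $0\notin U$. I plan to avert this by choosing $\psi$ so that the ray of $\eta$ does not meet $\{\pm i/2\}$, i.e., so that $ae^{i\psi}$ is not purely imaginary. With such a $\psi$, a routine estimate then gives $|f^2(z)|\gtrsim|a|$ on $T_\pm$, so $f^2(T_\pm)\subset V_\infty\subset U$ for $|a|$ large, completing the construction.
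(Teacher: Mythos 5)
Your argument for the d\nobreakdash-Cantor part is circular: to conclude $A_0(\infty)=F$ you invoke the fact that the Fatou set of a Cantor map is connected, but that $f$ is Cantor is precisely what the lemma asserts. Having all critical points attracted to $\infty$ puts them in $A(\infty)$, but for a rational map (unlike a polynomial) the poles $\pm 1$ lie in $A(\infty)$ without a priori lying in the immediate basin $A_0(\infty)$, so neither condition \ref{ab} nor condition \ref{wab} of Theorem \ref{th1} is directly verified by what you prove. The way around this, which the paper takes, is to prove first that $f^2$ is s\nobreakdash-Cantor and then deduce d\nobreakdash-Cantor from the equivalence of \ref{wdc} and \ref{dc} in Theorem \ref{th1}; you should rearrange your argument in the same order.

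For the s\nobreakdash-Cantor part you take a genuinely different route from the paper's. The paper takes the round disc $D=\{|z|<5/3\}$, uses the factorization $f=g\circ h$ with $g(z)=(z+z^{-1})/2$, $h(z)=2a(z^2-1)$ to see that $f^{-1}(D)$ is two annuli around $\pm1$ and $f^{-2}(D)$ is four annuli around $\pm\sqrt{1\pm 1/(2a)}$, observes that $\pm1$ lie in the unbounded component of $\mathbb C-f^{-2}(D)$, and then slits $D$ along two arcs reaching $\pm1$ to obtain a subdisc $D'$ with $f^{-2}(D)\subset D'$ and $\pm1\notin\overline{D'}$. This is essentially soft topology; the only estimate used is $|f(z)|>|a||z|$ for $|z|>5/3$. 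Your construction instead builds the open complement $U=V_\infty\cup N_\pm\cup T_\pm$ and checks $f^2(U)\subset U$ piece by piece. The idea is workable, but you have the decisive angular condition backwards: the curve $\bigl\{\eta+1/(4\eta):\eta\ \text{on the ray}\bigr\}$ approaches the zeros of $f$ (which sit near $\pm1$) precisely when $\eta$ approaches $\pm\tfrac12$, i.e.\ when the ray is close to the \emph{real} axis; so one must choose $\psi$ so that $ae^{i\psi}$ is bounded away from $\mathbb R$, not away from $i\mathbb R$. (The points $\eta=\pm i/2$ you single out are where $f(z)=0$, which is harmless since then $f^2(z)=f(0)$ is large.) With the corrected condition, and with the tubes taken narrow enough near $N_\pm$ that the phase of $h$ on a cross-section stays within a small interval, one can indeed push the estimate through to $|f^2|\ge c|a|$ on $T_\pm$, but this is a substantially more delicate computation than the paper's topological argument.
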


\begin{proof}
 Suppose $|a|>2$.
 Then it is easily seen that $|f(z)|>|a||z|$ provided $|z|>5/3$.
 Hence $\{|z|>5/3\}$ is included in $A_0(\infty)$,
 the immediate basin of attraction of $\infty$.
 Thus $f(0)=-a-\frac1{4a}\in A_0(\infty)$.

Note that $f=g\circ h$ with $g(z)=\frac{z+1/z}2,h(z)=2a(z^2-1)$.
It is easy to see that $g^{-1}([-1,1])=\{|z|=1\}$.
Hence $f^{-1}([-1,1])$ is a union of two simple closed
curves, say $\Gamma_1,\Gamma_2$, which encircle
 $-1$ and $1$ separately.
 Observe $-\sqrt{1\pm1/2a}\in \Gamma_1,\sqrt{1\pm1/2a}\in \Gamma_2$.
%Let $D_i\subset\mathbb C$ be the open topological disc
%such that $\partial D_i=\Gamma_i,i=1,2$.

Set $D=\{|z|<5/3\}$.
Then $f^{-1}(D)\subset D$ is the union of two annuli, which
encircle two critical values $\pm1$ separately;
$f^{-2}(D)\subset f^{-1}(D)$ is
the union of four annuli,
which encircle four critical points $\pm\sqrt{1\pm1/2a}$ separately.
Two critical values $\pm1$ are included in the unbounded component
of $\mathbb C-f^{-2}(D)$.
Thus there exists a topological disc $D'\subset D$ such that
$\pm1\notin \overline{D'}$ and $f^{-2}(D)\subset D'$, which
satisfies the property of being s-Cantor.
This completes the proof.
\end{proof}

\begin{theorem}\label{th3}
Suppose that $|a|$ is large enough for $f^2$ to be s-Cantor.
%there exists a radial $r$ for $f^2$ with $\phi_r:\Sigma_{16}
%\to J$ is one-to-one,
Then $f$ is not t-Cantor, namely, for any radial $r$ for $f$,
$\phi_r:\Sigma_4\to J$ is not one-to-one.
\end{theorem}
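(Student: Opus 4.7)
By Lemma \ref{lemma4}, showing $\phi_r$ is not one-to-one is equivalent to producing, for each radial $r$, a nontrivial $g \in \hat G_0$ and a word $w \in \mathtt{Word}$ with $\beta_{r,w}(g) = g$. My plan is to work inside the finite bounded iterated monodromy group $\hat G_\Omega$ for an expanding domain $\Omega$ chosen so that $\Omega \cap P = \{1,-1\}$; this is possible because for large $|a|$ the orbit $\mathtt{Orb}(f(0))$ escapes to $\infty$. The first step is to identify $\hat G_\Omega \cong \mathbb Z_2 \oplus \mathbb Z_2$, generated by loops $g_\pm$ around $\pm 1$. The relations $g_\pm^2 = e$ hold because lifts of $g_\pm^2$ are small loops around the non-postcritical critical points $\pm\sqrt{1\pm 1/(2a)}$ and hence null-homotopic in $\pi_1(\hat{\mathbb C} - P)$; injectivity of $f_r^*$ promotes this to $g_\pm^2 = e$ in $\hat G_0$. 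Commutativity $[g_+,g_-] = e$ follows from a Reidemeister--Schreier computation in the degree-two branched cover $f:X_i \to D$, which writes $[g_+,g_-]$ as twice a generator of $\pi_1(X_i) \cong \mathbb Z$ that itself pushes forward to $g_+^{\pm 1}$ in $\pi_1(\hat{\mathbb C}-P)$; combined with $g_+^2 = e$ this forces the commutator to be trivial.

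Next, label $f^{-1}(\bar x) = \{\bar x_1,\ldots,\bar x_4\}$ with $\bar x_1,\bar x_2 \in X_1$ and $\bar x_3,\bar x_4 \in X_2$, so that $\alpha(g_\pm) = (1\,2)(3\,4)$. From $\beta_{r,1}(g_\pm)\beta_{r,2}(g_\pm) = \beta_{r,1}(g_\pm^2) = e$ together with the exponent-$2$ structure, one obtains $\beta_{r,1}(g)=\beta_{r,2}(g)$ and $\beta_{r,3}(g)=\beta_{r,4}(g)$ for every $g\in\hat G_\Omega$, so $\phi := \beta_{r,1}|_{\hat G_\Omega}$ and $\psi := \beta_{r,3}|_{\hat G_\Omega}$ are well-defined group homomorphisms $\hat G_\Omega \to \hat G_\Omega$. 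The restriction $f_r^*|_{\hat G_\Omega}$ is then recorded by the single homomorphism
\[
\tilde f_r:\hat G_\Omega\cong \mathbb Z_2 \oplus \mathbb Z_2 \longrightarrow D_2\times D_2,\qquad g\mapsto(\phi(g),\psi(g)).
\]
The crucial geometric input -- independent of the radial -- is $\phi(h) = g_+$ and $\psi(h) = g_-$, where $h := g_+g_-$: the lift $L_1(h)$ inside the annulus $X_1$ computes, via the same Reidemeister--Schreier analysis, to the generator $a^{-1}$ of $\pi_1(X_1)\cong\mathbb Z$, and $a$ projects under $X_1 \hookrightarrow \hat{\mathbb C}-P$ to the loop $g_+$ around the postcritical point $1$ lying inside the hole of $X_1$; the analogous statement holds in $X_2$.

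With these constraints there are exactly four possibilities for each of $\phi$ and $\psi$, giving $16$ candidate homomorphisms $\tilde f_r$. I would complete the proof by a direct case-by-case verification that in every case the monoid of words generated by $\phi$ and $\psi$ contains an element fixing some nontrivial element of $\hat G_\Omega$: three of the four types of $\phi$ (and similarly $\psi$) already possess a nontrivial fixed vector on their own, while the remaining nilpotent-like case acquires a fixed vector from the composition $\phi\psi$ or $\psi\phi$. The resulting pair $(g,w)$ satisfies $\beta_{r,w}(g) = g$, completing the argument via Lemma \ref{lemma4}. The most delicate step is the geometric identification $\phi(h)=g_+$, $\psi(h)=g_-$ for \emph{every} radial $r$: this is where the hypothesis that $|a|$ is large is used essentially, to ensure that the critical points in each $X_i$ sit close enough to $\pm 1$ that the lifted loops actually enclose the intended postcritical point after conjugation by the radial paths.
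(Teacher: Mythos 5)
Your overall strategy --- reduce via Lemma \ref{lemma4} to a finite-group fixed-point problem, extract a family of homomorphisms into a small group depending on the radial, and finish by a case analysis --- is exactly the paper's. The identification of a rank-two $2$-torsion group and the $16$-case plan are essentially the paper's $\tilde f_r:\mathbb Z_2\oplus\mathbb Z_2\to R\times R$, $R\cong D_2$, and its four-case analysis. So the issue is not whether the approach is right in spirit; it is a concrete gap in the setup that the paper's proof is built to avoid.

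The gap: you fix $\Omega$ so small that $\Omega\cap P=\{1,-1\}$ and then define $\phi=\beta_{r,1}$, $\psi=\beta_{r,3}$ as self-maps of $\hat G_\Omega$. For $\beta_{r,i}$ to map $\hat G_\Omega$ into $\hat G_\Omega$, one needs the radial legs $l_i$ to lie in $\Omega$ (so that the conjugating arcs stay inside $\Omega-P$). But Theorem \ref{th3} quantifies over \emph{all} radials, and a radial can perfectly well wind around $f(0),f^2(0),\dots$ (which are outside your $\Omega$). For such a radial $r'$, one has $\beta_{r',i}(g)=h_i\,\beta_{r,i}(g)\,h_j^{-1}$ with $h_i\in\hat G_0$ involving the $C_k$ generators, and $\hat G_0$ is \emph{not} abelian (already $\alpha(AC_0)\ne\alpha(C_0A)$), so $\beta_{r',i}(\hat G_\Omega)\not\subset\hat G_\Omega$ in general. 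Your maps $\phi,\psi$ are then simply undefined, and the claimed constraint $\phi(h)=g_+$, $\psi(h)=g_-$ --- which you flag as the delicate step and attribute to $|a|$ large --- has no meaning. Taking $|a|$ large controls the geometry of the critical points inside each annulus; it does nothing to prevent a radial from going far out toward $\infty$ and looping around postcritical points outside $\Omega$.

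The paper resolves exactly this by never restricting $\beta_{r,i}$ to a subgroup. Instead it introduces the homomorphisms $\mu_A,\mu_B:G_0\to\mathbb Z_2$ on the \emph{full} fundamental group, shows (Claim 3) that $\ker\alpha_\infty\subset\ker(\mu_A\oplus\mu_B)$ so that they descend to $\hat G_0$, and then observes that conjugation by an arbitrary $h\in G_0^4\rtimes\{\mathrm{id}\}$ can only change the reduced homomorphism within four explicit conjugacy types (Claim 1(4)). That conjugation-invariance is what lets one handle \emph{every} radial. To repair your argument you would need to build the analogous retraction $\hat G_0\to\hat G_\Omega$ (which in this example is essentially $\mu_A\oplus\mu_B$) and verify that $\beta_{r',i}$ descends through it --- at which point you are reproving Claim 3. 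A smaller point: with the constraints $\phi(h)=g_+$, $\psi(h)=g_-$, only two of the four candidate $\phi$ (namely $\phi(g_+)\in\{g_+,\;g_++g_-\}$) have a nontrivial fixed vector, not three; the remaining two are the nilpotent one and the order-three one, so the composition step must absorb both, which it can, but the count as stated is off.
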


\begin{proof}
First we show that for some radial $r$ and
a generating set $\{A,B,C_k \ (k=0,1,2,\dots)\}\subset G_0$,
the homomorphism
$f_r^*:G_0\to G_1$ has the following description:
\begin{eqnarray}
A&\mapsto& A\cdot(1,2)+A^{-1}\cdot(2,1)+(3,4)+(4,3)\label{eq9}
\\
B&\mapsto& (1,2)+(2,1)+B\cdot(3,4)+B^{-1}\cdot(4,3)\label{eq10}
\\
C_0&\mapsto& B\cdot(1,4)+(2,2)+(3,3)
+B^{-1}\cdot(4,1)\label{eq11}\\
C_k&\mapsto&
(1,1)+(2,2)+(3,3)+C_{k-1}\cdot(4,4),
\ (k\ge1),\label{eq12}
\end{eqnarray}
where we simply write $(i,j)$ instead of $e\cdot(i,j)$.

To this end, let $a=\sqrt{-1}c$ with $c>0$ large enough.
The choice of the parameter is not important.
We fix the parameter to avoid complicated notation.
 Note that $f^k(0)\in\sqrt{-1}\mathbb R_{\leq0},k=1,2,\dots$ with
 $0>\mathrm{Im}\,f(0)>\mathrm{Im}\,f^2(0)>\cdots$ and
 $$\begin{array}{ccc}
 f^{-1}(\sqrt{-1}\mathbb R)&=&\mathbb R\cup\sqrt{-1}\mathbb R,\\
 f^{-1}(\sqrt{-1}\mathbb R_{\ge0})
 &=&J_1\cup J_2\cup J_3\cup J_4\subset\mathbb R,\end{array}$$ where
 $$\begin{array}{ll}
 J_1=(-\infty,-\sqrt{1+1/2c}],&J_2=(-1,-\sqrt{1-1/2c}],\\
 J_3=[\sqrt{1-1/2c},1),&J_4=[\sqrt{1+1/2c},\infty).
\end{array}$$
 We also set $\bar x=\sqrt{-1}$.
Then we have $f^{-1}(\bar x)=\{x_1,x_2,x_3,x_4\}\subset\mathbb R$
with $x_i\in J_i,i=1,2,3,4$.
%Note that $x_2\in D_1$ and $x_3\in D_2$.
Recall that we have two loops $\Gamma_1$ and $\Gamma_2$, which are
the inverse image of $[-1,1]$ encircling $-1$ and $1$ respectively.
Observe that the graph $L:=\bigcup_{i=1}^4 J_i\cup\bigcup_{i=1}^2 \Gamma_i
=f^{-1}(\sqrt{-1}\mathbb R_{\ge0}\cup[-1,1])$ divides $\mathbb C$ into
three connected components; one is unbounded and the other two are
bounded.

Take a radial $r=(l_i)_{i=1}^4$
such that $l_i$'s are arcs joining $\bar x$ and $x_i$
in the upper half-plane.
The group $G_0$ is  generated by $A=[\gamma_A],B=[\gamma_B],
C_k=[\gamma_{C_k}],k=0,1,\dots$,
 where $\gamma_A,\gamma_B,\gamma_{C_k}$
are loops in $\hat{\mathbb C}-P$ counterclockwise encircling
$-1,1,f^{k+1}(0)$ respectively.
Specifically, we define $\gamma_A$ (resp. $\gamma_B$) as
a loop obtained by going around the segment $I_1$ (resp. $I_2$)
joining $\bar x$
and $-1$ (resp. $1$), and $\gamma_{C_0}$ as a loop obtained by
going around a simple arc $I_3$ joining $\bar x$ and $f(0)$
in $\mathbb C-(\{x\le1\}\cup \{\sqrt{-1}y:y\le\mathrm{Im}\,f^2(0)\})$.
Finally we define
$\gamma_{C_{k+1}}$ as a loop  containing no critical point such that
$[\gamma_{C_{k+1}}]=[f(l_4^{-1}\gamma_{C_k}l_4)], k=0,1,\dots$, and (\ref{eq12})
 holds.

 Observe that $f^{-1}(I_1)$ has two connected components
 $I_{11}\subset \{\mathrm{Im}\, z\le0\}$ joining $x_1$ and $x_2$, and
 $I_{12}\subset \{\mathrm{Im}\,z\ge0\}$ joining $x_3$ and $x_4$, and that
 $l_1\cup I_{11}\cup l_2$ is homotopic to $\gamma_{A}$ in $\hat{\mathbb C}-P$,
 and $l_3\cup I_{12}\cup l_4$ is a trivial loop in $\hat{\mathbb C}-P$.
 A similar argument is true for $I_2$.
 Now we obtain (\ref{eq9}) and (\ref{eq10}).
The inverse image $f^{-1}(I_3)$ has three connected components, one of which,
say $I_{31}$, joins $x_1$ and $x_4$ included in the unbounded connected component
of $\mathbb C-L$.
It is easily seen that $l_1\cup I_{31}\cup l_4$ is homotopic to $\gamma_B$.
Thus we have (\ref{eq11}).

Let $T$ be the image of $\alpha:\pi_1(\hat{\mathbb C}-P,\bar x)\to
\mathfrak{S}(\mathtt{Word}(1))$ defined in Definition \ref{defi8}.
Recall $\mathtt{Word}(1)=\{1,2,3,4\}$.
Then
$$\begin{array}{ccl}
T&=&\langle(1\,4),(1\,2)(3\,4)\rangle\\
&=&\{\mathrm{id},(1\,4),(2\,3),(1\,2)(3\,4), (1\,3)(2\,4),
(1\,4)(2\,3),(1\,2\,4\,3),(1\,3\,4\,2)\}
\end{array}$$
 (we write a permutation
in the form of a product of cycles).
Notice that the product of $\tau,\tau'\in T$ is $\tau\tau'=\tau'\circ\tau$.
The group $T$ acts on $\mathtt{Word}(1)$ from the right by
$i\cdot\tau=\tau(i)$ and on
${\mathbb Z_2}^4=\mathrm{Map}(\mathtt{Word}(1),{\mathbb Z_2})$ from the left
 by $\tau\cdot q=q\circ \tau$, where
$q\in{\mathbb Z_2}^4,\tau\in T,i\in \mathtt{Word}(1)$.

The permutation group $T$ is characterized as the subgroup
of $\mathfrak{S}(\mathtt{Word}(1))$ which preserves
the partition $\mathcal{P}=\{\{1,4\},\{2,3\}\}$, namely,
$$T=\{\tau\in \mathfrak{S}(\mathtt{Word}(1)):P\cdot\tau\in\mathcal{P}\text{ for }P\in\mathcal{P}\}.$$
Let $L$ be the subgroup fixing each block of $\mathcal P$, namely,
$$\begin{array}{ccl}
L&=&\{\tau\in \mathfrak{S}(\mathtt{Word}(1)):P\cdot\tau=P\text{ for }P\in\mathcal{P}\}\\
&=&\{\mathrm{id},(1\,4),(2\,3),(1\,4)(2\,3)\}.
\end{array}$$

The action of $T$ keeps
the following two subgroups unchanged:
 $$
\begin{array}{ccl}
Q&:=&\ker((q_1,q_2,q_3,q_4)\mapsto \sum_{i=1}^4q_i)\\
&=&\{(q_1,q_2,q_3,q_4)\in{\mathbb Z_2}^4\,|\,
\sum_{i=1}^4q_i=0\},\\
S&:=&\ker((q_1,q_2,q_3,q_4)\mapsto (q_1+q_4,q_2+q_3))\\
&=&\{(q_1,q_2,q_3,q_4)\in{\mathbb Z_2}^4\,|\,
q_1=q_4,q_2=q_3\}\\
&=&\{(0,0,0,0),(1,0,0,1),(0,1,1,0),(1,1,1,1)\}\\
&=&\{q\in{\mathbb Z_2}^4\,|\,\tau\cdot q=q \text{ for }\tau\in L\}.
\end{array}$$
The group $T$ acts on the quotient groups $\mathbb Z_2^4/Q$ and
$Q/S$ trivially (not on $\mathbb Z_2^4/S$), and the group
$M({{\mathbb Z}_2},\mathtt{Word}(1))$ has subgroups
${\mathbb{Z}_2}^4\rtimes T, Q\rtimes T,
S\rtimes T$.

Note that the product of $(q,\tau),(q',\tau')\in
M({{\mathbb Z}_2},\mathtt{Word}(1))$ is
$(q,\tau),(q',\tau')=(q+\tau\cdot q',\tau\tau')$, and
the inverse of $(q,\tau)$ is $(q,\tau)^{-1}=(\tau^{-1}\cdot q,\tau^{-1})$.

\medskip

Claim 1.
The following hold.
\begin{enumerate}
  \item For $q\in {\mathbb{Z}_2}^4,\tau\in T$,
  $$q+\tau\cdot q\in S\iff \tau\in L\text{ or }q\in Q.$$
  \item ${\mathbb{Z}_2}^4\rtimes T$ has normal subgroups
$Q\rtimes T$, $Q\rtimes L$, and $S\rtimes L$.
\item The quotient group $R:=(Q\rtimes T)/(S\rtimes L)$ is
isomorphic to the Klein four-group $D_2\cong \mathbb Z_2\times\mathbb Z_2$.
\item Let $R=\{e,\zeta_1,\zeta_2,\zeta_3\}$.
Here $e=S\rtimes L$ is the identity element,
$\zeta_1=(S\rtimes T)-(S\rtimes L), \zeta_2=(Q\rtimes L)-(S\rtimes L)$,
and $\zeta_3=\zeta_1\zeta_2$.
Then $\zeta_1$ and $\zeta_3$ are conjugate in
$({\mathbb{Z}_2}^4\rtimes T)/(S\rtimes L)$.
\end{enumerate}

{\em Proof of Claim 1.}

(1)
Let $q=(q_1,q_2,q_3,q_4)$.
Then
$q+\tau\cdot q
=(q_1+q_{\tau(1)},q_2+q_{\tau(2)},q_3+q_{\tau(3)},q_4+q_{\tau(4)})$.
We have $q+\tau\cdot q\in S$ if and only if
$q_1+q_{\tau(1)}=q_4+q_{\tau(4)},q_2+q_{\tau(2)}=q_3+q_{\tau(3)}$.
This is equivalent to
\begin{equation}\label{eq:8-1}
    q_1+q_4=q_{\tau(1)}+q_{\tau(4)},q_2+q_3=q_{\tau(2)}+q_{\tau(3)},
\end{equation}
 which holds if $\tau\in L$.
For $\tau\in T-L$, we know that
 $\{1,4\}\cdot\tau=\{2,3\},\{1,4\}\cdot\tau=\{2,3\}$.
Thus if $\tau\in T-L$, (\ref{eq:8-1}) is equivalent to
$q_1+q_4=q_2+q_3$, the equation which the elements of $Q$ satisfy.

(2)
If $(q,\tau)\in {\mathbb{Z}_2}^4\rtimes T,(s,\tau')\in S\rtimes L$, then
we have $(q,\tau)(s,\tau')(q,\tau)^{-1}=
(q+\tau\cdot s+\tau\tau'\tau^{-1}\cdot q,\tau\tau'\tau^{-1})
\in S\rtimes L$ from $\tau\cdot s\in S$, $\tau\tau'\tau^{-1}\in L$, and
the fact $q+\sigma\cdot q\in S$ for $\sigma\in L$, which we have proved above.
Thus ${\mathbb{Z}_2}^4\rtimes T\vartriangleright S\rtimes L$.
A similar argument implies ${\mathbb{Z}_2}^4\rtimes T\vartriangleright Q\rtimes L$.

From the index $|{\mathbb{Z}_2}^4\rtimes T:Q\rtimes T|=2$,
it follows that ${\mathbb{Z}_2}^4\rtimes T\vartriangleright Q\rtimes T$.

(3)
The order $|R|=4$.
There exist two subgroups $S\rtimes T,Q\rtimes L$ of $Q\rtimes T$
greater than $S\rtimes L$.
Thus $R$ is not the cyclic group of order 4, and so $R\cong D_2$.

(4)
The element $\zeta_2$ is conjugate to only itself, since
${\mathbb{Z}_2}^4\rtimes T\vartriangleright Q\rtimes L$.
To show that $\zeta_1$ is conjugate to $\zeta_3$,
it is sufficient to show that $S\rtimes T$ is not a
normal subgroup of ${\mathbb{Z}_2}^4\rtimes T$.
Take $(p,\sigma)\in {\mathbb{Z}_2}^4\rtimes T$ with
$p\in {\mathbb{Z}_2}^4-Q$, and $(s,\tau)\in S\rtimes T$
with $\tau\in S-L$.
Then $(p,\sigma)(s,\tau)(p,\sigma)^{-1}=
(p+\sigma\cdot s+\sigma\tau\sigma^{-1}\cdot p,\sigma\tau\sigma^{-1})
\notin S\rtimes T$, since $p+\sigma\tau\sigma^{-1}\cdot p\notin S$ by (1).
This completes the proof of Claim 1.

%(but ${\mathbb{Z}_2}^4\rtimes T\not\vartriangleright S\rtimes T$).

\begin{remark}
$({\mathbb{Z}_2}^4\rtimes T)/(S\rtimes L)$ is isomorphic to $D_4$ the dihedral
  group of order 8.
\end{remark}

By the definition of $R$,
\begin{equation}\label{eq:9-1}
  \begin{array}{ccc}
(q,\tau)\in \zeta_1\text{ or } \in \zeta_3
&\Rightarrow&\tau\ne\mathrm{id}\\
(q,\tau)\in \zeta_2\text{ or } \in \zeta_3
&\Rightarrow& q\ne0
\end{array}
\end{equation}
This property is
the key of the proof of Theorem \ref{th3}.

An element $g\in G_0$ is expressed as
$$g=t_1^{n_1}t_2^{n_2}\cdots t_m^{n_m},
t_i\in \{A,B,C_k\,(k=0,1,2\dots)\},n_i\in\mathbb Z.$$
We write $|g|_A=\sum_{t_i=A}n_i$ and
$|g|_B=\sum_{t_i=B}n_i$.
Consider homomorphisms $\mu_A:G_0
\to \mathbb Z_2$ and $\mu_B:G_0
\to \mathbb Z_2$ by $\mu_A(g)=|g|_A\mod 2$ and
$\mu_B(g)=|g|_B\mod 2$.
%Since
%$$\ker(\mu_A+\mu_B)=\langle A^2,B^2,AB,C_k,AC_kA\,|\,k=0,1,2\dots\rangle,$$
%we have
%$$
%\begin{array}{ccl}
%\alpha(\ker(\mu_A+\mu_B))&=&
%\langle(1\, 4),(1\,2)(3\,4)(1\,4)(1\,2)(3\,4)
%\rangle\\
%&=&\{\mathrm{id},(1\,4),(2\,3),(1\,4)(2\,3)\}\\
%&=&L.
%\end{array}$$
%Then $S=\{q\in{{\mathbb Z}_2}^4\,|\,
%\forall \tau\in L,q\cdot \tau=q\}$ and
%$L=\{\tau\in\mathfrak{S}(W^1)\,|\,
%\forall q\in S,q\cdot \tau=q\}$.

The homomorphisms $\mu_A$ and $\mu_B$
 are naturally extended to $\mu_A,\mu_B:G_k
\to M(\mathbb Z_2,\mathtt{Word}(k))$.
It is easily seen that the image of $f_r^*:G_0\to G_1$ is included in
${G_0}^4\rtimes T$, and that the images of $\mu_A\circ f_r^*,\mu_B\circ f_r^*:
G_0\to {\mathbb Z_2}^4\rtimes T$ are included in $Q\rtimes T$.

Consider $\mu_A\times\mu_B:f_r^*(G_0)\to
({\mathbb Z_2}^4\rtimes T)\times({\mathbb Z_2}^4\rtimes T)$,
and let $\pi:({\mathbb Z_2}^4\rtimes T)\times ({\mathbb Z_2}^4\rtimes T)\to
D_4\times D_4$ be the projection.
Then $\pi\circ (\mu_A\times \mu_B)\circ f_r^*$ sends
$$
\begin{array}{ll}
  A&\mapsto ((1,1,0,0),(12)(34))\times((0,0,0,0),(12)(34))\mapsto (\zeta_3,\zeta_1)\\
  B&\mapsto ((0,0,0,0),(12)(34))\times((0,0,1,1),(12)(34))\mapsto (\zeta_1,\zeta_3)\\
C_0&\mapsto ((0,0,0,0),(14))\times((1,0,0,1),(14))\mapsto (e,e)\\
C_k&\mapsto ((0,0,0,0),\mathrm{id})\times((0,0,0,0),\mathrm{id})\mapsto (e,e), k=1,2,\dots
\end{array}
$$
Since the  the kernel of $\mu_A\oplus\mu_B:G_0\to \mathbb Z_2\oplus
\mathbb Z_2$ is the normal subgroup of $G_0$ generated by
$$\{A^2,B^2,(AB)^2,C_k|k=0,1,\dots\},$$
we obtain the homomorphism
$\tilde f_r:\mathbb Z_2\oplus\mathbb Z_2\to (\mathbb Z_2^4\rtimes T/S\rtimes L)
\times (\mathbb Z_2^4\rtimes T/S\rtimes L)\cong D_4\times D_4$
reduced from $f_r^*:{G_0}\to {G_0}^4\rtimes T\subset G_1$:
$$
  \label{eq:commutative-diagram}
  \raisebox{-0.5\height}{\includegraphics[trim=0 710 300 20,clip]{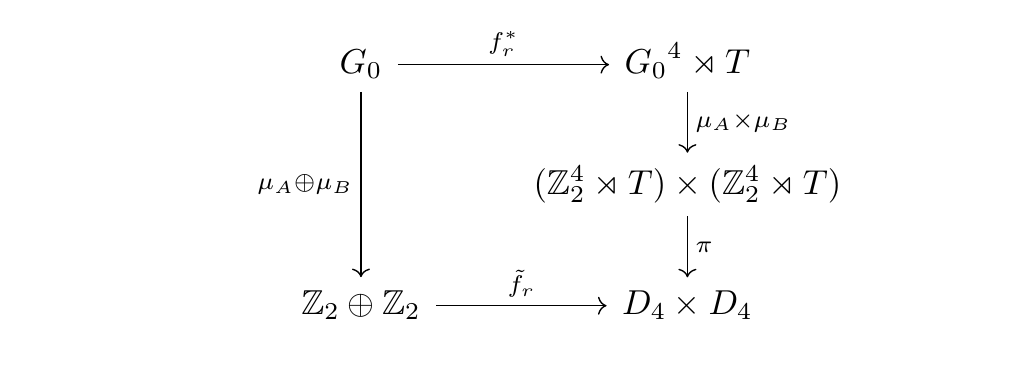}}
$$
%$$
%\begin{tikzcd}
%G_0 \arrow[r,"f_r^*"]\arrow[dd,"\mu_A\oplus\mu_B"'] & {G_0}^4\rtimes T %\arrow[d,"\mu_A\times\mu_B"]\\
%& (\mathbb Z_2^4\rtimes T)\times (\mathbb Z_2^4\rtimes T)\arrow[d, "\pi"]\\
%\mathbb Z_2\oplus\mathbb Z_2\arrow[r,"\tilde f_r"]&D_4\times D_4
%\end{tikzcd}
%$$
The image of $\tilde f_r$ is included in $(Q\rtimes T/S\rtimes L)\times (Q\rtimes T/S\rtimes L)= R\times R$, and
$\tilde f_r$ sends $(1,0)$ to $(\zeta_3,\zeta_1)$, $(0,1)$ to $(\zeta_1,\zeta_3)$,
(and so $(1,1)$ to $(\zeta_2,\zeta_2)$).

Let $r'=(l_i')$ be a radial for $f$ other than $r$.
Then $f_{r'}^*(g)=hf_{r}^*(g)h^{-1}$ for some
$h\in {G_0}^4\rtimes\{\mathrm{id}\}$ as seen in
Section \ref{sec6-1}.
From $\mathbb Z_2^4\rtimes T\vartriangleright Q\rtimes T$,
we also have a similar commutative diagram as above for $f_{r'}^*$, namely,
we reduce $f_{r'}^*$ to
$\tilde f_{r'}:\mathbb Z_2\oplus\mathbb Z_2\to D_4\times D_4$, which is
obtained by
$$\tilde f_{r'}(x)
=(\mu_A(h)k_1\mu_A(h^{-1}),\mu_B(h)k_2\mu_B(h^{-1})),$$ where
$\tilde f_r(x)=(k_1,k_2)$.
By 4 of Claim 1, we have only four possibilities of the behavior of $\tilde f_{r'}$:
\begin{itemize}
  \item[Case 1.] $(1,0)\mapsto (\zeta_1,\zeta_3),\ (0,1)\mapsto (\zeta_3,\zeta_1)$
  \item[Case 2.] $(1,0)\mapsto (\zeta_3,\zeta_3),\ (0,1)\mapsto (\zeta_1,\zeta_1)$
  \item[Case 3.] $(1,0)\mapsto (\zeta_1,\zeta_1),\ (0,1)\mapsto (\zeta_3,\zeta_3)$
  \item[Case 4.] $(1,0)\mapsto (\zeta_3,\zeta_1),\ (0,1)\mapsto (\zeta_1,\zeta_3)$
\end{itemize}
Now (\ref{eq:9-1}) means:
\begin{equation}\label{eq:10}
  \begin{array}{ccc}
\mu_X\circ f_{r'}^*(g)\in \zeta_1\text{ or } \in \zeta_3
&\Rightarrow&\alpha_f(g)\ne\mathrm{id}\\
\mu_X\circ f_{r'}^*(g)\in \zeta_2\text{ or } \in \zeta_3
&\Rightarrow& \exists i\in\mathtt{Word}(1),\mu_X\circ\beta_{r',i}(g)=1,
\end{array}
\end{equation}
where $X\in\{A,B\}$.

\medskip

Claim 2.
In each case, we can find $g\in G_0$ such that
for any $k>0$ there exists $w\in\mathtt{Word}(k)$  satisfying
$\beta_{r',w}(g)\notin\ker(\mu_A\oplus\mu_B)$.
%We know that $\ker\alpha_\infty\subset\ker(\mu_A\oplus\mu_B)$.

{\em Proof of Claim 2.}\

Case 1 and 4.
By (\ref{eq:10}),
$$\begin{array}{cl}
(\mu_A\oplus\mu_B)(g)=(1,0) \Rightarrow&
\exists i\in\mathtt{Word}(1),\ (\mu_A\oplus\mu_B)(\beta_{r',i}(g))\ne(0,0)\\
(\mu_A\oplus\mu_B)(g)=(0,1) \Rightarrow&
\exists i\in\mathtt{Word}(1),\ (\mu_A\oplus\mu_B)(\beta_{r',i}(g))\ne(0,0)\\
(\mu_A\oplus\mu_B)(g)=(1,1) \Rightarrow&
\exists i\in\mathtt{Word}(1),\ (\mu_A\oplus\mu_B)(\beta_{r',i}(g))\ne(0,0)
\end{array}$$

Hence any $g\notin\ker(\mu_A\oplus\mu_B)$ satisfies the required property.

Case 2.
By (\ref{eq:10}),
$$\begin{array}{cl}
(\mu_A\oplus\mu_B)(g)=(1,0) &\Rightarrow
\exists i\in\mathtt{Word}(1),\ \beta_{r',i}(g)\in \{(1,0),(1,1)\}\\
(\mu_A\oplus\mu_B)(g)=(1,1) &\Rightarrow
\exists i\in\mathtt{Word}(1),\ \beta_{r',i}(g)\in\{(1,0),(1,1)\}.
\end{array}$$
Hence any $g\in(\mu_A\oplus\mu_B)^{-1}(\{(1,0),(1,1)\})$ satisfies the required property.

The remaining case is similar to Case 2.
This completes the proof of Claim 2.

\medskip

Claim 3.
The homomorphism $\mu_A\oplus\mu_B:G_0\to\mathbb Z_2\oplus\mathbb Z_2$
breaks up
 into $G_0\overset{\alpha_\infty}{\rightarrow} \hat G_0\rightarrow\mathbb Z_2\oplus\mathbb Z_2$.

{\em Proof of Claim 3.}
We return to the original radial $r$, whose behavior is Case 4.

First note that if $g\in \ker \alpha_f$, then
$\pi\circ(\mu_A\times\mu_B)\circ f_r^*(g)\ne
(\zeta_1,\zeta_3),(\zeta_3,\zeta_1)$ by (\ref{eq:10}), and so $(\mu_A\oplus\mu_B)(g)=(0,0)$ or $(1,1)$.

We show $\ker\alpha_\infty\subset\ker(\mu_A\oplus\mu_B)$.
Take $g\in\ker\alpha_\infty$, and assume $(\mu_A\oplus\mu_B)(g)\ne(0,0)$.
Then $g_w:=\beta_{r,w}(g)\in\ker\alpha_f$ and
\begin{equation}\label{eq:11}
  (\mu_A\oplus\mu_B)(g_w)=(0,0)\text{ or }(1,1)
  \end{equation}  for any $w$;
by Claim 2, for any $k\ge0$ there exists $w(k)\in\mathtt{Word}(k)$
such that
\begin{equation}\label{eq:13}
  (\mu_A\oplus\mu_B)(g_{w(k)})=(1,1).
  \end{equation}
By (\ref{eq9}) to (\ref{eq12}), there exists $k_0\in\mathbb N$ such that
if $k\geq k_0$ and  $w\in \mathtt{Word}(k)$, then
$g_w\in\mathcal H:=\langle A,B\rangle$.
Fix $k\ge k_0$.
From (\ref{eq:11})
$$g_w\in \mathcal H_0:=\langle A^2,B^2,AB\rangle=\mathcal H\cap
(\mu_A\oplus\mu_B)^{-1}(\{(0,0),(1,1)\}).$$
We can deduce a contradiction that
$g_{iw(k)}\notin\mathcal H_0$ for $i\in\mathtt{Word}(1)$.
Indeed, let $\mathcal H_1$ be the normal subgroup of $\mathcal H$ generated by $\{A^2,B^2\}$.
Then $\mu_A(\mathcal H_1)=\mu_B(\mathcal H_1)=0$, and $f_r^*(\mathcal H_1)=\{e\}$.
The quotient group $\mathcal H_0/\mathcal H_1$ is the cyclic group generated by $\{AB\mathcal H_1\}$.
By (\ref{eq:13}),  $g_{w(k)}\in (AB)^m\mathcal H_1$ with $m$ odd.
Observe $f_r^*(g_{w(k)})=f_r^*((AB)^m)=A^m(1,1)+A^{-m}(2,2)+B^m(3,3)+B^{-m}(4,4)$.
Thus the proof of Claim 3 is completed.

We have the commutative diagram:
$$
\raisebox{-0.5\height}{\includegraphics[trim=0 710 300 10,clip]{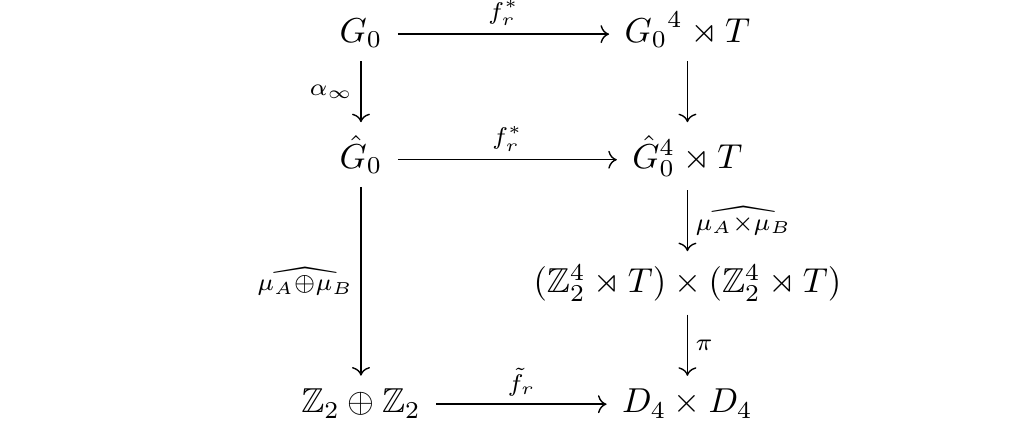}}
$$
%$$
%\begin{tikzcd}
%G_0 \arrow[r,"f_r^*"] \arrow[d,"\alpha_\infty"']& {G_0}^4\rtimes T\arrow[d," "] \\
%\hat G_0 \arrow[r,"f_r^*"]\arrow[dd,"\widehat{\mu_A\oplus\mu_B}"']&
%{{\hat G}_0}^4\rtimes T\arrow[d,"\widehat{\mu_A\times\mu_B}"]\\
%& (\mathbb Z_2^4\rtimes T)\times (\mathbb Z_2^4\rtimes T)\arrow[d, "\pi"]\\
%\mathbb Z_2\oplus\mathbb Z_2\arrow[r,"\tilde f_r"]&D_4\times D_4
%\end{tikzcd}
%$$

Now for any radial $r'$, there exists $\hat g\in\hat G_0$ such that
for any $k>0$ there exists $w\in\mathtt{Word}(k)$ satisfying
$\beta_{r',w}(\hat g)\ne e$, where $e$ is the identity element of $\hat G_0$.
Indeed, let $g$ be as in Claim 2, and $\hat g=\alpha_\infty(g)$.
From Lemma \ref{lemma4}, $\phi_{r'}$ is not one-to-one.

\end{proof}

\begin{theorem}\label{th10}
 There exists $\bar f\in S_d-T_d$ for $d\ge 4$.
\end{theorem}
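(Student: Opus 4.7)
For $d=4$ the result is already contained in Theorem \ref{th3}: the map $\bar f$ constructed there lies in $S_4$ and fails to be t-Cantor, hence fails to be s-Cantor by Proposition \ref{lemcantor}, so $\bar f\in S_4\setminus T_4$.

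For $d\ge 5$ the plan is to boost the degree of $\bar f$ from $4$ to $d$ by a quasi-regular surgery concentrated near the superattracting fixed point $\infty$, raising the local degree at $\infty$ from $2$ to $d-2$ while leaving the dynamics away from $\infty$ essentially unchanged. Concretely, I would fix a small round disc $V=\varphi^{-1}(\{|z|<\rho\})$ in the B\"ottcher coordinate $\varphi$ at $\infty$, disjoint from $J_{\bar f}$ and containing no critical value of $\bar f$ other than $\infty$, together with a concentric sub-disc $V'\Subset V$. On $V'$ define a quasi-regular model equal, in the B\"ottcher coordinate, to $z\mapsto cz^{d-2}$ with $c$ chosen so that the outer boundary image of $V'$ matches the outer boundary image of $V$ under $\bar f$; outside $V$ leave $\bar f$ untouched; in the annulus $V\setminus V'$ interpolate by a smooth quasi-conformal annular covering. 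Since $V$ is forward-invariant for the resulting quasi-regular map $\tilde F_d$ and every orbit visits the interpolation annulus at most once, Shishikura's surgery principle produces a rational map $\bar f_d$ of degree $d$ qc-conjugate to $\tilde F_d$; after the conjugacy we may assume $\bar f_d$ agrees with $\bar f$ outside the (qc-image of) $V$, and that its local degree at $\infty$ is $d-2$ (the two preimages of $\infty$ outside $V$ are unchanged, so the global degree is $(d-2)+2=d$).

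That $\bar f_d\in S_d$ follows because every critical point of $\bar f_d$ lies in the immediate basin of $\infty$: the old ones because $\bar f$ satisfied Cond C, and the $d-4$ new ones introduced in $V'$ because they lie in the forward-invariant disc $V$. So Cond C holds, and by Theorem \ref{th1}, $\bar f_d$ is d-Cantor. To see $\bar f_d\notin T_d$, suppose for contradiction that $\bar f_d$ is s-Cantor, witnessed by a forward-invariant open topological disc $W_d\ni\infty$ containing $P_{\bar f_d}$. The natural candidate for an s-Cantor disc for $\bar f$ is $W:=W_d\cup V$: it is $\bar f$-forward-invariant (using $\bar f=\bar f_d$ outside $V$ together with $\bar f(V)\subset V$), contains $P_{\bar f}\subset P_{\bar f_d}\subset W_d$, and its complement meets only critical values of $\bar f$, which are also critical values of $\bar f_d$. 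If $W$ is a topological disc, then $\bar f$ itself is s-Cantor, contradicting Theorem \ref{th3}.

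The main obstacle is the topological step: $W_d\cup V$ is a disc precisely when $W_d\cap V$ is connected, which is not guaranteed for two arbitrary discs in $\hat{\mathbb{C}}$. The fix I have in mind is to rechoose $V$ in hindsight: the iterates $\bar f_d^{\,n}(V)$ form a nested sequence of topological discs around $\infty$ shrinking uniformly in the B\"ottcher metric, so for $n$ large enough, $\bar f_d^{\,n}(V)\subset W_d$. Since the only requirements on the surgery disc are forward-invariance, disjointness from $J_{\bar f}$, and smallness, one may redo the construction with $\bar f_d^{\,n}(V)$ in place of $V$, thereby arranging $V\subset W_d$ so that $W=W_d$ is automatically a disc. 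The subtlety here is the circularity that $W_d$ depends on the surgery and hence on $V$; but successive surgeries differ only by a qc-conjugacy supported near $\infty$, which transports a hypothetical s-Cantor disc for the coarser surgery to one for the finer surgery, so the contradiction argument goes through consistently.
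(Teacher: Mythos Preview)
Your $d=4$ case is fine (it is exactly Theorem~\ref{th3}), but the surgery you propose for $d\ge 5$ cannot change the global degree. In the B\"ottcher chart $\bar f$ is $w\mapsto w^2$, so $\bar f|_{\partial V}$ is a degree-$2$ circle map, while your model on $V'$ has boundary degree $d-2$. An annular covering has the same degree on both boundary components, so no covering interpolation on $V\setminus V'$ can join these; and with your normalization (image of $\partial V'$ equal to $\bar f(\partial V)$) both boundary circles of the annulus are sent to the \emph{same} curve, so the ``annular covering'' degenerates. More decisively, count preimages of a generic $q\notin \bar f(V)$: since $\bar f(V)\subset V$, every $\bar f$-preimage of such $q$ already lies outside $V$, where you have not touched the map, so $\tilde F_d^{-1}(q)=\bar f^{-1}(q)$ has exactly $4$ points. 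Your count $(d-2)+2=d$ is correct only over $q=\infty$. Thus $\tilde F_d$ is not a degree-$d$ branched cover, and Shishikura's principle does not apply.

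The paper increases the degree by a different device: it fixes an arc $[z_1,z_2]$ in the basin and glues $d-4$ extra ``sheets'' $E_i$, each mapped homeomorphically onto $\hat{\mathbb C}\setminus f([z_1,z_2])$, thereby producing a genuine degree-$d$ topological branched covering $g$ with two new simple critical points $z_1,z_2$ (not at $\infty$). This $g$ is then realized as a rational map via the Cui--Tan/Zhang--Jiang theorem (Corollary~\ref{cor1}). The obstruction argument is also different from yours and avoids the disc-union issue entirely: instead of transporting a hypothetical s-Cantor disc, the paper shows the degree-$d$ map is not even t-Cantor by restricting a putative injective radial $(l_i)_{i=1}^d$ to the four legs that avoid the added sheets, obtaining a radial for the original degree-$4$ map whose coding would then be injective, contradicting Theorem~\ref{th3}.
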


We use the Thurston type theorem for a characterization of subhyperbolic
rational maps.

\begin{defi}
  Let $g:\hat{\mathbb C}\to\hat{\mathbb C}$ be a topological branched covering.
  Let $P_g$ be the set of the points in the critical orbits.
  We say $g$ is  a {\em subhyperbolic semi-rational} map
  if $\overline P_g-P_g$ is a finite set, $g$ is holomorphic in a
  neighborhood of $\overline P_g-P_g$, and every periodic points
  in $\overline P_g-P_g$ is (super)attracting.

  We say two subhyperbolic semi-rational maps $g_1,g_2$ are {\em c-equivalent} to
  each other if there exist two homeomorphisms
  $\phi,\psi:\hat{\mathbb C}\to\hat{\mathbb C}$ satisfying:
  (1) $\phi\circ g_1=g_2\circ \psi$, (2) $\phi=\psi$ and is holomorphic
  on a neighborhood $U$ of  $\overline{P_{g_1}}-P_{g_1}$, (3) $\phi=\psi$
  on $P_{g_1}$, and (4) $\phi$ and $\psi$ are isotopic to each other
  rel $P_{g_1}\cup U$.
\end{defi}

\begin{theorem}\label{theoremtct}(Zhang-Jiang \cite{ZhangJiang09},Cui-Tan \cite{CuiTan11})
Let $g$ be a subhyperbolic semi-rational map with $\#P_{g}=\infty$.
The map $g$ is c-equivalent to a rational map
 if and only if $g$ has no Thurston obstruction.
\end{theorem}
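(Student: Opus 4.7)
The case $d=4$ is Theorem \ref{th3}, so the plan is to produce, for each $d\geq5$, a rational map $\bar f_d\in S_d\setminus T_d$ by topological surgery on the degree-$4$ example $\bar f$ of Theorem \ref{th3}. First I would fix a disc neighborhood $N$ of the (super)attracting fixed point $\infty$ contained in its immediate basin $A_0(\infty)$ and disjoint from the finite critical values $\{\bar f(0),1,-1\}$, and let $N_\infty$ be the connected component of $\bar f^{-1}(N)$ containing $\infty$. I then define a quasi-regular branched covering $g_d:\hat{\mathbb C}\to\hat{\mathbb C}$ of degree $d$ by setting $g_d=\bar f$ on $\hat{\mathbb C}-N_\infty$ and replacing $\bar f|_{N_\infty}:N_\infty\to N$ with a proper branched cover of degree $d-2$ having $\infty$ as its unique critical point, of local multiplicity $d-2$. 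The total degree rises from $4$ to $d$, every added critical multiplicity is concentrated at the fixed point $\infty$, so $P_{g_d}=P_{\bar f}$, and $g_d$ is a subhyperbolic semi-rational map with $\#P_{g_d}=\infty$.

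Next I would realize $g_d$ as a rational map via Theorem \ref{theoremtct}, the main task being to rule out Thurston obstructions. Since $g_d$ agrees with $\bar f$ off $N_\infty$, and since $N$ and $N_\infty$ are topological discs meeting $P=P_{\bar f}=P_{g_d}$ only at $\infty$, for any essential simple closed curve $\gamma\subset \hat{\mathbb C}-P$ the essential components of $g_d^{-1}(\gamma)$ in $\hat{\mathbb C}-P$ coincide with those of $\bar f^{-1}(\gamma)$; the additional components of $g_d^{-1}(\gamma)$ created inside $N_\infty$ are peripheral to $\infty$ alone and hence inessential. The resulting transition matrix of any putative obstruction multicurve $\Gamma$ for $g_d$ therefore coincides with one for $\bar f$, which admits no obstruction by rationality. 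Theorem \ref{theoremtct} consequently furnishes a rational map $\bar f_d$ of degree $d$ c-equivalent to $g_d$.

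To conclude, I verify $\bar f_d\in S_d\setminus T_d$. Under the c-equivalence, $\bar f_d$ inherits a (super)attracting fixed point whose immediate basin contains every critical orbit, so condition (\ref{ab}) of Theorem \ref{th1} yields $\bar f_d\in S_d$. For the non-s-Cantor part, it suffices to prove the stronger statement that $\bar f_d$ is not t-Cantor, via Lemma \ref{lemma4}. The c-equivalence identifies $\pi_1(\hat{\mathbb C}-P_{\bar f_d},\bar x_d)$ with $G_0=\pi_1(\hat{\mathbb C}-P,\bar x)$ so that the generators $A,B,C_k$ correspond, and it identifies the set of inverse branches on a small disc around $\bar x_d$ with that of $g_d$ around $\bar x$; on the four ``old'' preimages $x_1,\dots,x_4$ the inverse branches of $g_d$ literally coincide with those of $\bar f$, so the block of $(\bar f_d)_r^{\ast}$ indexed by these four preimages reproduces formulas (\ref{eq9})--(\ref{eq12}). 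The reduction $\mu_A\oplus\mu_B$ projected onto this block is precisely the homomorphism $\tilde f_r:\mathbb Z_2\oplus\mathbb Z_2\to D_4\times D_4$ of Theorem \ref{th3}, and under change of radial it again falls into one of the four cases analyzed there, so Claims 1--3 apply verbatim. For any radial of $\bar f_d$, this yields $\hat g\in \hat G_0$ such that some $\beta_{r,w}(\hat g)$ over an old-branch word $w\in\mathtt{Word}(k)$ is nontrivial for every $k$; Lemma \ref{lemma4} then shows the coding map is not injective, so $\bar f_d$ is not t-Cantor, in particular not s-Cantor.

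The main obstacle is twofold. First, the Thurston-obstruction step must be made rigorous by a direct inspection of the transition matrix, verifying that the new preimage annuli inside $N_\infty$ contribute zero to every entry (they bound discs with $\infty$ as the sole marked point). Second, one must ensure that the algebraic obstruction of Theorem \ref{th3} truly descends to the enlarged branch tree of $\bar f_d$: the crucial point is that for any $g\in G_0$ and any word $w$ lying in the old four-branch sub-tree, $\beta_{r,w}(g)$ computed with respect to $\bar f_d$ equals $\beta_{r,w}(g)$ computed with respect to $\bar f$, so a coefficient of $(f_r^{\ast})^k(g)$ nontrivial on the old sub-tree remains nontrivial in $M(G_0,\bar f_d^{-k}(\bar x_d))$, and no monodromy contributed by the surgery can trivialize it.
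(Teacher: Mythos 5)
Your proposal does not address the theorem in question. Theorem \ref{theoremtct} is the Zhang--Jiang / Cui--Tan characterization of subhyperbolic semi-rational maps that are c-equivalent to rational maps (namely, absence of Thurston obstruction); the paper quotes it from \cite{ZhangJiang09} and \cite{CuiTan11} and does not reprove it. What you have written is a proof sketch of Theorem \ref{th10}, the existence of $\bar f\in S_d-T_d$ for $d\ge4$, which \emph{uses} Theorem \ref{theoremtct} (through Corollary \ref{cor1}) as input. Nothing in the proposal bears on the ``if and only if'' characterization you were asked to establish.

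Even read as a proof of Theorem \ref{th10}, the surgery step is not well-defined. You set $g_d=\bar f$ on $\hat{\mathbb C}-N_\infty$ and then claim to replace $\bar f|_{N_\infty}\colon N_\infty\to N$ by a proper branched cover of degree $d-2$ with a single critical point of local multiplicity $d-2$ at $\infty$. But a proper map between topological discs has degree equal to the degree of its boundary restriction, and you have forced $g_d|_{\partial N_\infty}=\bar f|_{\partial N_\infty}$, which is of degree two; hence any extension of $\bar f|_{\hat{\mathbb C}-N_\infty}$ over $N_\infty$ must have degree two there, not $d-2$, so the prescribed $g_d$ does not exist. To increase the global degree one must graft new preimage sheets over a slit or curve in the target, which is exactly what the paper does in its proof of Theorem \ref{th10}: it inserts flaps $E_i$ mapping onto $\hat{\mathbb C}-f([z_1,z_2])$, creating two new critical points $z_1,z_2$ of degree $d-3$ in $A_0(\infty)$. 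That necessarily changes the critical portrait (so your assertion $P_{g_d}=P_{\bar f}$ fails), although the new critical orbits still tend to $\infty$ so the modified map remains a subhyperbolic semi-rational map. The remainder of your plan --- ruling out Thurston obstructions via Corollary \ref{cor1} and pushing the algebraic obstruction of Theorem \ref{th3} from the original four branches to the enlarged tree via Lemma \ref{lemma4} --- is close in spirit to what the paper does for Theorem \ref{th10}, but it must be run on a surgery that actually produces a branched covering.
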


\begin{cor}\label{cor1}
  If $g$ is a subhyperbolic semi-rational map without
  homotopically invariant curve, then $g$ is c-equivalent to
  a hyperbolic rational map whose Julia set is Cantor.
\end{cor}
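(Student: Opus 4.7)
The strategy is to invoke Theorem \ref{theoremtct} to promote $g$ to a rational map $f$, then apply the equivalence \ref{cyc}$\Leftrightarrow$\ref{c} of Theorem \ref{th1} to conclude $J_f$ is Cantor. The bridge between these two theorems is that the nonexistence of a homotopically invariant curve implies the nonexistence of any Thurston obstruction.

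First I would show that a Thurston obstruction for $g$---a nonempty multicurve $\Gamma=\{\gamma_1,\dots,\gamma_n\}$ of disjoint essential non-peripheral simple closed curves in $\hat{\mathbb C}-P_g$, stable under pullback up to homotopy, whose associated transition matrix has spectral radius at least one---always produces a homotopically invariant curve. Indeed, applying Perron--Frobenius to the nonnegative transition matrix yields a communication class with spectral radius $\geq1$, and a pigeonhole argument on the induced self-map of $\Gamma$ under pullback then provides some $\gamma\in\Gamma$, an integer $k>0$, and an essential component $\gamma'$ of $g^{-k}(\gamma)$ freely homotopic to $\gamma$ in $\hat{\mathbb C}-P_g$. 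After pushing $\gamma$ off the (super)attracting basins associated with $\overline{P_g}-P_g$ by a homotopy in $\hat{\mathbb C}-P_g$, the curve lies in an expanding domain for $g$ and is homotopically invariant in the sense of Definition \ref{defiinvc}, contradicting the hypothesis.

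Applying Theorem \ref{theoremtct} (or the classical Thurston rigidity theorem if $\#P_g<\infty$) therefore produces a rational map $f$ that is c-equivalent to $g$. Since c-equivalence is holomorphic on a neighborhood of $\overline{P_g}-P_g$ and matches up critical orbits, every critical point of $f$ either is eventually mapped into $\phi(\overline{P_g}-P_g)$---whose periodic points are (super)attracting---or has orbit accumulating there; thus every critical orbit of $f$ is contained in the basin of a (super)attracting periodic cycle, and $f$ is hyperbolic. Because c-equivalence is homotopy-theoretic rel $P$, the absence of homotopically invariant curves transfers from $g$ to $f$, and \ref{cyc}$\Rightarrow$\ref{c} of Theorem \ref{th1} then gives that $f$ is Cantor.

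The main obstacle is the first step: converting a Thurston obstruction, which is intrinsically a multicurve-level spectral condition, into a single dynamically invariant curve in the precise sense of Definition \ref{defiinvc}. The Perron--Frobenius extraction is standard in spirit, but one must check that the resulting curve is nontrivial, avoids the postcritical set, and---after a homotopy---sits inside a suitable expanding domain so that it meets the definition exactly; the hypothesis that $g$ is subhyperbolic is what makes the last of these adjustments possible.
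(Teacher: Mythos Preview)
The paper states Corollary~\ref{cor1} without proof, treating it as an immediate consequence of Theorem~\ref{theoremtct} together with Theorem~\ref{th1}; your outline is precisely the natural way to fill this in, and the strategy is correct.

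Two remarks. First, invoking Perron--Frobenius in Step~1 is more than you need: a Thurston obstruction has spectral radius $\ge 1$, so its transition matrix is not nilpotent, and hence the associated directed graph contains a directed cycle. Any curve on that cycle then has, for some $k$, an essential component of its $g^{-k}$-preimage homotopic to itself, which is exactly a homotopically invariant curve.

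Second, your hyperbolicity argument has a small gap. The dichotomy ``eventually mapped into $\phi(\overline{P_g}\setminus P_g)$ or accumulating there'' overlooks the possibility of a critical point of $g$ with \emph{finite} forward orbit landing on a periodic cycle lying in $P_g$ itself (not in $\overline{P_g}\setminus P_g$). You should note that this case is excluded by the hypothesis: a small loop encircling a point of such a cycle is homotopically nontrivial in $\hat{\mathbb C}\setminus P_g$, and under the period of the cycle it lifts to a homotopic loop, giving a homotopically invariant curve. Hence every critical orbit of $g$ is infinite and accumulates only on $\overline{P_g}\setminus P_g$, so your argument goes through and $f$ is indeed hyperbolic; in particular $J_f\cap\mathtt{Cr}_f=\emptyset$, which is needed before Theorem~\ref{th1} can be applied in your final step.
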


\begin{proof}[Proof of Theorem \ref{th10}]
 We construct $\bar f$ by modifying $f$ above.
 Let $a=ic$ with $c>0$ large again.
 Then $[\sqrt{1+1/2y},\infty)$ is included in $A_0(\infty)$ the attracting basin
 of $\infty$.
 Take two points $z_1,z_2\in \mathbb R$ with $\sqrt{1+1/2y}<z_1<z_2$
 and $d-3$ smooth functions $h_i:[z_1,z_2]\to\mathbb R,i=1,2,\dots,d-3$
 such that $h_i(z_1)=h_i(z_2)=0$ ($i=1,2,\dots,d-3$),
 $h_i(x)<h_{i+1}(x)$ ($z_1<x<z_2, i=1,2,\dots,d-4$),
 and $\{x+iy:z_1\le x\le z_2,h_1(x)\le y\le h_{d-3}(x)\}\subset A_0(\infty)$.
 Let $E_i=\{x+iy:z_1\le x\le z_2,h_i(x)\le y\le h_{i+1}(x)\}$
 ($i=1,2,\dots,d-4$).
Define a branched covering $g$ of degree $d$ with critical set
$\mathrm{Cr}_g=\mathrm{Cr}_f\cup \{z_1,z_2\}$ satisfying
 $g(\partial E_i)=f([z_1,z_2])$,
 $g|_{\mathrm{int} E_i}:\mathrm{int} E_i\to \hat{\mathbb C}-f([z_1,z_2])$ is homeomorphic,
 and $g|_{\hat{\mathbb C}-\bigcup_i E_i}=f\circ \beta$,
 where $\beta:\hat{\mathbb C}-\bigcup_i E_i\to\hat{\mathbb C}-[z_1,z_2]$ is a
 homeomorphism which is the identity outside a small
 neighborhood of $\bigcup_i E_i$.

By Corollary \ref{cor1}, there exists a rational map $\bar f$ c-equivalent
to $g$ with $J_{\bar f}$ Cantor.
Let  $r=(l_i)_{i=1,2,\dots,d}$ be a radial for $\bar f$.
If the coding map $\phi_r:\Sigma_d\to J_{\bar f}$ is one-to-one, then
we have a radial $r'$ for $f$ with $\phi_{r'}:\Sigma_4\to J_{f}$ is one-to-one.
Indeed, we can assume that $x_i=l_i(1)\in \mathbb C-\bigcup_i E_i$
for $i=1,2,3,4$ and  $x_i\in E_{i-4}$ for $i=5,6,\dots,d$.
Moreover we can assume that $l_i\subset \mathbb C-\bigcup_i E_i$
for $i=1,2,3,4$, since $\bigcup_i E_i$ does not intersect $P_{\bar f}$.
Define a radial $r'=(l_i')_{i=1,2,3,4}$ for $f$
which is derived from $(l_i)_{i=1,2,3,4}$.
It is easy to see that if $\phi_r$ is one-to-one, so is $\phi_{r'}$.
\end{proof}

\section*{Appendix}
In Appendix, we describe a generalization of Section 3.
We state a version of Theorem \ref{th1} without the assumption
$J\cap\mathtt{Cr}=\emptyset$.
We omit the proof, most of which are parallel to Theorem 1.
The detail is left to the reader.

\begin{defi}
 Let $X$ be a topological space.
  We say that a continuous mapping $a:X\to\hat{\mathbb C}$ is
 {\em tiny}
 if there exist $p\in J$
 and  a homotopy
 $H:X\times[0,1]\to\hat{\mathbb C}- (P-\{p\})$ between $a$ and a constant
 mapping $p$.
 In particular, a homotopically trivial mapping is tiny.
 We say that $p$ is the {\em center} of $a$.
If $a$ is tiny and homotopically nontrivial, then
the choice of $p$ is unique and  $p\in P$.

 A subset $X\subset\hat{\mathbb C}$ is said to be
 tiny if the inclusion map is tiny.
\end{defi}

\begin{defi}
Set $P'=\overline P- \bigcup_{k=0}^\infty f^{-k}(\mathtt{Pa})$.
 Let $\Omega$ be an expanding domain.

A closed curve $\gamma:S^1\to \Omega-P'$
is {\em homotopically invariant} if $\gamma$ is
homotopically nontrivial, and
  there exists a lifted loop $\gamma':S^1\to \Omega-P'$ of $\gamma$
  by $f^k$  for some $k>0$ and there
  exists a homotopy $H:S^1\times[0,1]\to \Omega-P'$
  between $\gamma$ and $\gamma'$ such that for $s\in S^1$,
 if $H(s,t_0)\in P$ for some $t_0\in[0,1)$, then $H(s,t)=H(s,t_0)$
 for any $t\in[0,1]$.

% We say that a homotopically invariant curve is {\em tiny}
% if it is tiny.
\end{defi}

\begin{defi}
For a subset $U\subset \hat{\mathbb C}$ and $p\in P$, we say that
 $p$ is {\em weakly contained} in $U$ if either $p\in U$ or $p$ is
 the center of  some tiny topological disc $D$
 satisfying $p\in D$ and $\partial D\subset U$.
\end{defi}

\begin{theorem}\label{th0.0}
 Let $f$ be a geometrically finite rational map of degree $d$.
  Take  an expanding domain   $\Omega$.
The following are equivalent.
\begin{enumerate}
\item\label{00-1} $f$ is Cantor.
\item\label{00-2}
     Every connected component of $J$ is tiny.
 \item\label{00-2.5} $F\ne\emptyset$, and every homotopically invariant  curve
       is tiny.
 \item\label{00-3}
      There exists  $n>0$ such that for any  closed curve
      $\gamma\subset\Omega-P$,  any lifted loop of $\gamma$ by $f^{-n}$
       is tiny.
\item\label{00-4}
There exists  $n>0$ such that  any connected component of
$f^{-n}(\Omega)$ is tiny.

\item\label{00-6}
     There exists a solitary fixed point $p$ such that every postcritical
     value is  weakly
       contained in $A_0(p)$.

\item\label{00-7}
     There exists a solitary fixed point $p$, and
     every critical value is weakly contained in
     the basin of attraction $A(p)$, and
      at least $2d-4$ critical values counted with
     multiplicity are weakly contained in the immediate basin of attraction
     $A_0(p)$.
      \end{enumerate}
\end{theorem}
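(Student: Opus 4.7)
The plan is to follow the template of the proof of Theorem \ref{th1} systematically, replacing ``homotopically trivial'' by ``tiny'' and direct inclusion in $A_0(p)$ by ``weakly contained'' in $A_0(p)$. The trivial implications carry over unchanged: $(1)\Rightarrow(2)$ because components of a Cantor set are singletons and therefore trivially tiny; $(5)\Rightarrow(4)$ because every lifted loop of a curve in $\Omega-P$ by $f^{-n}$ lies in a connected component of $f^{-n}(\Omega)$; $(4)\Rightarrow(3)$ by applying (4) directly to a hypothetical non-tiny homotopically invariant curve and its lifted loops; and $(6)\Rightarrow(7)$ is immediate from the definitions, since a weakly contained set either lies in $A_0(p)$ or is the center of a tiny disc with boundary in $A_0(p)$, and $2d-2\ge 2d-4$.

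For $(3)\Rightarrow(5)$, I would prove a ``tiny'' version of Lemma \ref{lem6} and Corollary \ref{cor2}. The existence of $F\ne\emptyset$ together with the fact that a Cantor-type map has a unique solitary fixed point furnishes a simple domain $U$ and its nested preimages $U_k$. A component $V$ of $\hat{\mathbb C}-U_k$ is either tiny (good) or not, in which case it carries a boundary curve that eventually becomes homotopically invariant under the selfmap $V\mapsto V'\mapsto V^*$ on the finite set of non-tiny components. The crucial bookkeeping is that the new definition of homotopically invariant allows the curve to cross $\overline P-P'=\bigcup f^{-k}(\mathtt{Pa})$: this is exactly what one needs, because tiny discs centered at pre-parabolic points do not block the chain of homotopies. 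If every non-tiny invariant curve is itself tiny (contradicting hypothesis (3) unless vacuously satisfied), then all components of $\hat{\mathbb C}-U_k$ become tiny for large $k$, yielding (5).

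The implication $(7)\Rightarrow(6)$ uses the Euler-characteristic/Riemann--Hurwitz computation from the proof of $\ref{wab}\Rightarrow\ref{ab}$ in Theorem \ref{th1} essentially verbatim. Weakly-contained critical values whose centers lie in $J$ contribute correctly to the count $m_i$, because their tiny-disc boundaries lie in $A(p)\subset F$; shrinking $W$ along these boundaries does not alter the Euler characteristic. One again deduces $s\leq 1$, and in the $s=1$ case the resulting invariant boundary curve together with a tiny-aware analogue of Lemma \ref{lemmajc} contradicts the degree-one condition forced by $2d_i-2-m_i=0$. The output is the needed solitary fixed point $p$ with $P\subset A(p)$ in the weakly-contained sense.

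The main obstacle is $(6)\Rightarrow(1)$, which requires extending the tree-and-branch construction of the proof of $\ref{ab}\Rightarrow\ref{dc}$ in Theorem \ref{th1} to handle postcritical points lying in $J$. For each postcritical $q\in J\cap\overline P$, hypothesis (6) gives a tiny disc $D_q$ with $q\in D_q$ and $\partial D_q\subset A_0(p)$; I would incorporate the boundary arcs $\partial D_q$ into the spanning tree $\Gamma\subset U_n-U$ so that the cut-open domain $V'=V-\Gamma$ still has exactly $d$ inverse branches on $J$. The delicate point is arranging $\Gamma$ so that it connects each tiny-disc boundary to the existing tree without creating loops and without disconnecting the inverse-branch components of $f^{-k}(V)$ on $J$ (the third property in the list defining $\Gamma$ in the proof of Theorem \ref{th1}). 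Once such $\Gamma$ is constructed, the Poincar\'e/orbifold metric contraction of Lemma \ref{lemmetric}, applied on the expanding domain $\Omega$, shows that the diameters of $g_w(J)$ shrink uniformly as $|w|\to\infty$, yielding the conjugacy $(f,J)\cong (\sigma,\Sigma_d)$ and establishing $(1)$.
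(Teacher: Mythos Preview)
Your overall plan---parallel the proof of Theorem \ref{th1}, replacing ``homotopically trivial'' by ``tiny'' and inclusion in $A_0(p)$ by ``weakly contained''---is exactly what the paper intends (it says so explicitly and omits the details). The bookkeeping you outline for the implications among (2)--(7) is sound, and your remark that the modified definition of ``homotopically invariant'' is designed to let the homotopy cross pre-parabolic points is the right observation.

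The genuine gap is in your treatment of $(6)\Rightarrow(1)$. You propose to adapt the tree-and-branch construction from the proof of $\ref{ab}\Rightarrow\ref{dc}$ and conclude with ``the conjugacy $(f,J)\cong(\sigma,\Sigma_d)$''. But that conjugacy is precisely the statement that $f$ is d-Cantor, and Proposition \ref{propgeof} shows that d-Cantor forces $J\cap\mathtt{Cr}=\emptyset$. The entire point of the Appendix is to drop that hypothesis, so in the cases of interest the conjugacy you are aiming for is \emph{false}. Concretely, if a critical point $c$ lies in $J$, then $\#f^{-1}(f(c))<d$ and there are no $d$ well-defined inverse branches $g_i:J\to J$; your device of attaching $\partial D_q$ to $\Gamma$ produces a simply connected $V'$ that \emph{misses} a neighborhood of $q$ in $J$, so the resulting ``branches'' are not defined on all of $J$ and the IFS picture collapses.

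What you actually need for $(6)\Rightarrow(1)$ is only total disconnectedness of $J$, not a shift conjugacy. A workable route: from (6) obtain (5) as you indicate, and then argue directly that for each $z\in J$ the nested components $V_k(z)$ of $f^{-k}(\Omega)$ (or of $\hat{\mathbb C}-\overline{U_k}$) containing $z$ shrink to $\{z\}$. Away from the finite set $\overline P\cap J$ the orbifold-metric expansion of Lemma \ref{lemmetric} gives this immediately, since the pieces are eventually tiny and hence carry degree-one restrictions of $f$. Near a postcritical point $q\in J$ one uses the tiny disc $D_q$ supplied by (6): its boundary lies in $A_0(p)$, so $D_q\cap J$ is open and closed in $J$, and iterated preimages of $D_q$ (which are again tiny discs, each containing at most one point of $\overline P\cap J$) separate $q$ from the rest of $J$ and shrink by the same expansion argument. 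This yields $J$ Cantor without ever invoking a $d$-to-one symbolic model.
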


\begin{theorem}
 Let $f$ be a geometrically finite rational map with
 $\mathtt{Pa}\cap P=\emptyset$.
  Take  an expanding domain   $\Omega$.
The following are equivalent.
\begin{enumerate}
\item\label{c0-1} $f$ is Cantor.
 \item\label{c0-5.0}
      The quotient group
      $\pi_1(\Omega-P,\bar x)/\ker \alpha_{\infty}$
      is a finite group.
 \item \label{c0-3}
       There exists a neighborhood $K$ of $J$ such that
       $K-P$ is arcwise-connected and
       $\pi_1(K-P,\bar x)/\ker \alpha_{\infty}$
      is a finite group.
\end{enumerate}
\end{theorem}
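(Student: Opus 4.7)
The plan is to establish the chain $(1)\Leftrightarrow(2)\Leftrightarrow(3)$ by adapting the equivalence \ref{c}$\Leftrightarrow$\ref{c0-5} of Theorem \ref{th1} to the broader setting (where $J\cap\mathtt{Cr}$ may be non-empty), invoking Theorem \ref{th0.0} in place of the parts of Theorem \ref{th1} that needed $J\cap\mathtt{Cr}=\emptyset$, and then adding (3) via a neighborhood comparison. For (2)$\Rightarrow$(3), I would simply take $K=\Omega$, using that $\Omega$ is a connected open neighborhood of $J$ with $\Omega-P$ arcwise connected. For (3)$\Rightarrow$(2) I would exploit the expanding property of $\Omega$ to choose $n_0$ with $f^{-n_0}(\bar x)\subset K$; after transporting loops through the homomorphism $f_r^{*n_0}:\hat G_0\to\hat G_{n_0}$ of Definition \ref{defi8}, the image of $\pi_1(\Omega-P)$ in the IMG embeds in a subgroup of $M(\pi_1(K-P)/\ker\alpha_\infty,\,f^{-n_0}(\bar x))\rtimes\mathfrak{S}(f^{-n_0}(\bar x))$, hence is finite.

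For (2)$\Rightarrow$(1) I would argue by contrapositive through Theorem \ref{th0.0}(\ref{00-2.5}): if $f$ is not Cantor, either $F=\emptyset$ (in which case the IMG of the full fundamental group, and hence of $\pi_1(\Omega-P)$ since $\Omega=\hat{\mathbb C}$ in this case, is infinite), or there exists a homotopically invariant non-tiny closed curve $\gamma\subset\Omega-P'\subset\Omega-P$. In the latter case, Lemma \ref{lemmajc} supplies a non-trivial limit curve $\bar\gamma\subset J$ on which $f^k$ acts with degree $>1$, so the cycles of $\alpha_{f^{km}}([\gamma])$ around the $\bar\gamma$-component grow in length without bound, producing an element of infinite order in $\pi_1(\Omega-P)/\ker\alpha_\infty$ and contradicting (2).

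The direction (1)$\Rightarrow$(2) is the main content. By Theorem \ref{th0.0}(\ref{00-4}) there exists $n$ such that each connected component $C$ of $f^{-n}(\Omega)$ is tiny with some center $p_C\in J$; in particular $C\cap P\subset\{p_C\}$. Given $\gamma\subset\Omega-P$ with $\alpha_{f^n}([\gamma])=\mathrm{id}$, each lift $L_{x,n}(\gamma)$ is a closed curve inside its tiny component $C_x$, so its class in $\pi_1(\hat{\mathbb C}-P)$ is a product of conjugates of $\delta_{p_{C_x}}^{\pm1}$, where $\delta_p$ denotes a small loop encircling $p\in P$ (and $\delta_p=1$ if $p\notin P$). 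Consequently $f_r^{*n}([\gamma])$ lies inside $N\rtimes\mathfrak{S}(f^{-n}(\bar x))$, where $N$ is the normal closure in the IMG of the finite set $\{\alpha_\infty(\delta_p):p\in P\cap J\}$, so finiteness of $\pi_1(\Omega-P)/\ker\alpha_\infty$ reduces to showing that this normal closure is finite. The genuine obstacle is therefore to establish that each $\alpha_\infty(\delta_p)$ has finite order and that the IMG-conjugates of these finitely many elements generate only finitely many automorphisms: here the hypothesis $\mathtt{Pa}\cap P=\emptyset$, together with geometric finiteness, is essential — it forces $p$ to be a non-parabolic pre-periodic Julia point, and an inductive tiny-component analysis along the finite orbit $\{p,f(p),f^2(p),\dots\}\cap J$ bounds the cycle lengths of $\alpha_{f^k}(\delta_p)$ by a product of local degrees along the terminal periodic cycle, giving the required uniform bound. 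Carrying out this inductive bookkeeping — and verifying that Cantor-ness (not merely non-parabolicity) prevents the normal closure from growing unboundedly — is the technical heart of the proof.
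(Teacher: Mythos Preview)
The paper omits this proof, saying only that it parallels Theorem~\ref{th1}. Your plan matches that indication, and the directions (2)$\Rightarrow$(3), (3)$\Rightarrow$(2), and (2)$\Rightarrow$(1) are sound as sketched. The gap is in (1)$\Rightarrow$(2): you reduce to showing that the normal closure $N$ in $\hat G_0$ of $\{\alpha_\infty(\delta_p):p\in P\cap J\}$ is finite, but this is neither what is needed nor what your bookkeeping establishes. The full iterated monodromy group $\hat G_0$ is always infinite (it acts transitively on each $f^{-k}(\bar x)$), so conjugating a finite set of torsion elements by all of $\hat G_0$ can yield an infinite subgroup; your inductive analysis bounds only the \emph{order} of each $\alpha_\infty(\delta_p)$, and finite-order generators do not force a finite normal closure.

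The missing step is to sharpen ``product of conjugates of $\delta_p^{\pm1}$'' to ``power of one fixed conjugate''. Since $d(P\cap F,\,J)>0$ and $P\cap J$ is finite, one may enlarge $n$ until every component $C$ of $f^{-n}(\Omega)$ lies in a disk $D$ with $D\cap P$ either empty or a single point $p\in P\cap J$. Then $\pi_1(C-P,x)\to\pi_1(\hat{\mathbb C}-P,x)$ factors through $\pi_1(D-\{p\},x)\cong\mathbb Z$, so $\beta_{r,w}([\gamma])$ lies in a fixed cyclic group $Z_w=\langle[l_w]\,\xi_x\,[l_w]^{-1}\rangle$ that depends only on $w$ and the radial, not on $\gamma$. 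Hence $(f_r^*)^n$ embeds $\alpha_\infty\bigl(\ker\alpha_{f^n}\cap\pi_1(\Omega-P)\bigr)$ into the finite product $\prod_w\alpha_\infty(Z_w)$, each factor of order at most $\max_{p\in P\cap J}\nu(p)$, and finiteness of $\hat G_\Omega$ follows at once. This is the exact analogue of \ref{ex}$\Rightarrow$\ref{c0-5} in Theorem~\ref{th1}: ``homotopically trivial'' becomes ``trapped in a once-punctured disk'', and the trivial group is replaced by a cyclic group of bounded order.
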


\begin{remark}
 Let $f$ be a geometrically finite rational map.
 If $J$ is Cantor, then $F$ contains at least two critical values.
 Indeed, for a solitary fixed point $p$, take
 $U_k$ as in Definition \ref{defidomain}.
 If every $U_k$ has at most one critical value, then
 every $U_k$ is simply connected, and so $J$ is not Cantor.

 Conversely, for $d\ge 2$, there exists a rational map of degree $d$ such that
 $J$ is Cantor and $F$ contains exactly two critical values of degree two.
\end{remark}

 \begin{remark}
   In \ref{00-2.5} of Theorem \ref{th0.0}, the condition
   $F\ne\emptyset$ is necessary.
  We give an example of a subhyperbolic rational map with
  $F=\emptyset$ such that every  homotopically
  invariant curve $\gamma$ in $\hat{\mathbb C}-P$ is tiny.

  Let $\alpha=1+2i$ and define $g:\mathbb C\to\mathbb C$ by $g(z)=\alpha z$.
  Consider the quotient space $S=\mathbb C/\Gamma$, where
  $\Gamma=\langle z\mapsto -z,z\mapsto z+1,z\mapsto z+i\rangle$.
  Then  $g$ descends to
  $\bar f:S\to S$ by $\bar f([z])=[g(z)]$.
  We have a bijection $h:S\to \hat{\mathbb C}$ with singularity
  at $[0],[1/2],[i/2],[(1+i)/2]$ such that $f=h\circ \bar f
  \circ h^{-1}$ is
  a rational map.
  The map $h$ is induced from the Weierstrass $\wp$ function for
  the lattice $\langle 1,i\rangle$.
  For example, we have
  $f(z)=z(z^2-\alpha)^2/(\alpha z^2-1)^2  $.
  We can see that $P=\{0,\pm1,\infty\}$ consists of fixed points with
  multiplier $\alpha^2$, and $J=\hat{\mathbb C}$.
 Then every  homotopically
  invariant curve $\gamma$ in $\hat{\mathbb C}-P$ is tiny.
\end{remark}

\bibliographystyle{plain}
%\bibliography{../myrefs}

\end{document}